
\documentclass[11pt,reqno]{amsart}
\usepackage[utf8]{inputenc}
\usepackage[T1]{fontenc}
\usepackage[english]{babel}
\usepackage[usenames,dvipsnames]{xcolor}

\usepackage{hyperref}
\definecolor{darkred}{rgb}{0.7,0,0}
\definecolor{darkblue}{rgb}{0,0,0.7}
\hypersetup{colorlinks=true,urlcolor=darkred,linkcolor=darkred,citecolor=darkblue}
\urlstyle{same}

\usepackage{dsfont}
\usepackage{amsfonts}
\usepackage{amssymb}
\usepackage{amsmath}
\usepackage{amsthm}
\usepackage[left=3cm, right=3cm, top=2.2cm,bottom=2.2cm]{geometry}
\usepackage{mathtools}
\usepackage{etoolbox}

\usepackage{tikz}
\usepackage{tikz-cd}
\usetikzlibrary{arrows}
\tikzset{nodes={inner sep=0.2em}}
\tikzset{commutative diagrams/.cd,arrow style=tikz}

\DeclareRobustCommand\myiso{\xrightarrow{\!\smash{\raisebox{-0.25ex}{\ensuremath{\scriptstyle\sim}}\!}}}

\patchcmd{\section}{\scshape}{\bfseries}{}{}
\makeatletter
\renewcommand{\@secnumfont}{\bfseries}
\makeatother

\usepackage{enumitem}

\usepackage{cleveref}

\crefname{thm}{Theorem}{Theorems}
\crefname{prop}{Proposition}{Propositions}
\crefname{thmintro}{Theorem}{Theorems}
\crefname{propintro}{Proposition}{Propositions}
\crefname{lemma}{Lemma}{Lemmas}
\crefname{rem}{Remark}{Remarks}
\crefname{cor}{Corollary}{Corollaries}
\crefname{defi}{Definition}{Definitions}
\crefname{ex}{Example}{Examples}
\crefname{section}{Section}{Sections}

\newtheoremstyle{standard}{2ex}{2ex}{\itshape}{}{\bfseries}{.}{.5em}{}
\theoremstyle{standard}
\newtheorem{lemma}{Lemma}[section]
\newtheorem{prop}[lemma]{Proposition}
\newtheorem{cor}[lemma]{Corollary} 

\newtheorem{thmintro}{Theorem}[section]

\newtheoremstyle{definition}{2ex}{2ex}{}{}{\bfseries}{.}{.5em}{}    
\theoremstyle{definition}

\newtheorem{rem}[lemma]{Remark}

\newtheoremstyle{intermediate}{2ex}{2ex}{}{}{\itshape}{.}{.5em}{}    
\theoremstyle{intermediate}


\DeclareMathOperator{\Mod}{\mathsf{Mod}}
\DeclareMathOperator{\Alg}{\mathsf{Alg}}
\DeclareMathOperator{\Cat}{\mathsf{Cat}}
\DeclareMathOperator{\LFP}{\mathsf{LFP}}

\DeclareMathOperator{\cat}{\mathsf{cat}}
\DeclareMathOperator{\Set}{\mathsf{Set}}
\DeclareMathOperator{\Rex}{\mathsf{Rex}}
\DeclareMathOperator{\FinSet}{\mathsf{FinSet}}

\DeclareMathOperator{\SymPsMon}{\mathsf{SymPsMon}}


\DeclareMathOperator{\End}{End}
\DeclareMathOperator{\Hom}{Hom}
\DeclareMathOperator{\HOM}{\underline{\Hom}}
\DeclareMathOperator{\Ind}{Ind}
\DeclareMathOperator{\id}{id}

\DeclareMathOperator{\op}{op}
\DeclareMathOperator{\Ob}{Ob}
\DeclareMathOperator{\fp}{fp}

\renewcommand{\c}{\mathrm{c}}
\DeclareMathOperator{\fc}{fc}

\DeclareMathOperator{\coboxtimes}{\mathbin{\widehat{\boxtimes}}}

\newcommand{\A}{\mathcal{A}}
\newcommand{\B}{\mathcal{B}}
\newcommand{\C}{\mathcal{C}}
\newcommand{\D}{\mathcal{D}}
\newcommand{\E}{\mathcal{E}}

\newcommand{\I}{\mathcal{I}}

\newcommand{\J}{\mathcal{J}}

\renewcommand{\P}{\mathcal{P}}

\newcommand{\IK}{\mathds{K}}

\title[Bicategorical colimits of tensor categories]{Bicategorical colimits of tensor categories}
\author{Martin Brandenburg}
\thanks{\emph{E-mail address:} \texttt{brandenburg@uni-muenster.de}}

\begin{document}

\begin{abstract}
\noindent
In this expository paper we explain in detail how to construct bicategorical colimits of several kinds of tensor categories, for example essentially small finitely cocomplete $\IK$-linear tensor categories. The constructions are direct and elementary.
\end{abstract}

\maketitle

\section{Introduction}

In this expository paper we explain in detail how to construct bicategorical colimits of several kinds of tensor (i.e.\ symmetric monoidal) categories. In particular, we prove:

\begin{thmintro}\label{myThm}
The $2$-categories $\cat_{\otimes}$ and $\cat_{\fc\!\otimes}$ of essentially small (finitely cocomplete) tensor categories together with (finitely cocontinuous) tensor functors are bicategorically cocomplete. The same holds for the corresponding variants of $\IK$-linear structured categories $\cat_{\otimes/\IK}$ and $\cat_{\fc\!\otimes/\IK}$, where $\IK$ is any commutative ring. 
\end{thmintro}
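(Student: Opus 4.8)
The plan is to reduce bicategorical cocompleteness to the existence of a small generating class of bicolimits and then to construct each member of that class explicitly in all four settings at once. Recall that a $2$-category is bicategorically cocomplete as soon as it admits small bicoproducts, bicoinserters and bicoequifiers, since every weighted bicolimit is flexible and can be assembled from these (the dual of the corresponding statement for PIE-bilimits). It therefore suffices to build these three kinds of bicolimits. I would organise the argument around the observation that each of the four $2$-categories in question is the $2$-category of commutative pseudomonoids in a symmetric monoidal $2$-category $(\mathcal{V},\boxtimes)$: for $\cat_\otimes$ one takes $\mathcal{V}=\Cat$ with $\boxtimes=\times$; for $\cat_{\fc\!\otimes}$ one takes $\mathcal{V}=\Rex$ (finitely cocomplete categories and finitely cocontinuous functors) with the Kelly tensor product; and the $\IK$-linear variants are obtained by passing to the $\IK$-linear analogues. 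The key structural input is that in each case $\mathcal{V}$ is itself bicategorically cocomplete and $\boxtimes$ is bicocontinuous in each variable.

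Bicoproducts are the easiest step. For a small family $(\C_i)_i$ the bicoproduct is the ``restricted'' $\boxtimes$-product, namely the full subcategory of $\prod_i \C_i$ on the tuples that are isomorphic to the unit object in all but finitely many coordinates, equipped with the pointwise (or $\boxtimes$-induced) tensor structure; for a finite family this is simply $\C_1\boxtimes\cdots\boxtimes\C_n$. The universal property is exactly the statement that a (finitely cocontinuous) tensor functor out of the $\boxtimes$-product corresponds to a family of such functors, which is precisely the defining property of $\boxtimes$ on $\mathcal{V}$ together with the fact that coproducts of commutative monoids are computed by tensor products. Essential smallness is preserved because the index set is a set and each $\C_i$ is essentially small.

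For the two-dimensional bicolimits I would construct them by hand and check that the forgetful $2$-functor to $\mathcal{V}$ creates them. Given tensor functors $F,G\colon\C\to\D$, the bicoinserter is obtained from $\D$ by freely adjoining a monoidal natural family of morphisms $\lambda_X\colon HFX\to HGX$: the morphisms of the new category are equivalence classes of zigzags built from the morphisms of $\D$ and the new arrows $\lambda_X$, subject to naturality and to monoidality $\lambda_{X\otimes Y}=\lambda_X\otimes\lambda_Y$ and $\lambda_{\mathbf 1}=\id$. Dually, given monoidal natural transformations $\alpha,\beta\colon F\Rightarrow G$, the bicoequifier is the tensor-compatible quotient of $\D$ that imposes the relations $\alpha_X=\beta_X$. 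In the $\fc$ and $\IK$-linear cases these constructions are carried out inside the appropriate finite-colimit (resp.\ $\IK$-linear) completion so that the result again lies in $\mathcal{V}$; here one uses that $\boxtimes$ is bicocontinuous to see that the induced structure maps remain finitely cocontinuous.

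The main obstacle I expect is purely two-dimensional coherence: one must equip the freely generated categories underlying the bicoinserters with a symmetric monoidal structure, verify the pseudomonoid (associativity, unit and symmetry) coherence axioms up to coherent isomorphism, and check that the universal $1$- and $2$-cells are genuinely monoidal and enjoy the correct \emph{bicategorical} universal property rather than merely a strict $2$-categorical one. Keeping track of these constraints simultaneously with finite cocompleteness and $\IK$-linearity---while ensuring every intermediate category remains essentially small---is where the bulk of the elementary but lengthy bookkeeping lies; everything else follows formally from the reduction to bicoproducts, bicoinserters and bicoequifiers.
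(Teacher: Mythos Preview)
Your high-level plan (reduce to bicoproducts, bicoinserters and bicoequifiers, and view everything as commutative pseudomonoids in a suitable $(\mathcal{V},\boxtimes)$) matches the paper exactly. The treatment of bicoproducts is also essentially the same.

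The gap is in the coinserter step, and it is precisely the point the paper singles out as the non-trivial one. Your claim that the forgetful $2$-functor to $\mathcal{V}$ \emph{creates} bicoinserters is not correct: the underlying category of the tensor bicoinserter of $F,G:\C\rightrightarrows\D$ is \emph{not} the bicoinserter of $F,G$ in $\mathcal{V}$. Your hands-on description (``adjoin $\lambda_X$ subject to naturality and $\lambda_{X\otimes Y}=\lambda_X\otimes\lambda_Y$'') hides the problem: to even state that relation you need $\otimes$ to be defined on the new morphisms, in particular you need to know what $\lambda_X\otimes\id_A$ is for an \emph{arbitrary} $A\in\D$, not just for $A$ in the image of $F$ or $G$. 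Nothing in your generators-and-relations presentation produces such a morphism.

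The paper's fix is a ``stabilisation'' trick you are missing: replace $F,G$ by $\overline{F},\overline{G}:\D\boxtimes\C\to\D$ given on elementary tensors by $\overline{F}(A,X)=A\otimes F(X)$, and take the bicoinserter of $\overline{F},\overline{G}$ in $\mathcal{V}$. The universal $\delta:P\overline{F}\Rightarrow P\overline{G}$ now supplies exactly the morphisms $P(A\otimes FX)\to P(A\otimes GX)$ needed to extend $\otimes$, and the bulk of the work is the (long but elementary) coherence check that the resulting $\otimes$ really is symmetric monoidal. Even then one is not done: the induced $\delta':PF\Rightarrow PG$ is only a transformation of underlying functors, not of tensor functors, so a further bicoequifier (in $\mathcal{V}$, again stabilised) must be applied to force monoidality of $\delta'$. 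This two-step phenomenon is invisible in the decategorified setting of commutative monoids and is the place where your sketch, which imposes monoidality of $\lambda$ in a single stroke, would unravel if carried out in detail. The bicoequifiers, by contrast, really are a single stabilised step, closer to what you wrote.
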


Along the way, we also prove that the $2$-categories $\cat$ and $\cat_{\fc}$ of (finitely cocomplete) essentially small categories are bicategorically cocomplete.  It follows in particular:

\begin{thmintro}\label{myCor}
The $2$-category $\LFP_{\otimes}$, whose objects are locally finitely presentable tensor categories and whose morphisms are cocontinuous tensor functors preserving finitely presentable objects, is bicategorically cocomplete. Moreover, the inclusion $\LFP_{\otimes} \hookrightarrow \Cat_{\c\otimes}$ into the $2$-category of all cocomplete tensor categories preserves bicategorical colimits.
\end{thmintro}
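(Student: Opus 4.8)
The plan is to deduce both assertions from \cref{myThm} through the monoidal Gabriel--Ulmer correspondence. The key input is the biequivalence
\[
\Ind\colon \cat_{\fc\!\otimes}\myiso\LFP_{\otimes},
\]
whose pseudo-inverse sends a locally finitely presentable tensor category $\C$ to the full subcategory $\C^{\fp}$ of finitely presentable objects. First I would check that this is well defined on both sides: for an essentially small finitely cocomplete tensor category $\A$, the Ind-completion $\Ind(\A)$ is locally finitely presentable, its tensor product extends cocontinuously in each variable (hence closed, by the adjoint functor theorem for presentable categories) and $\Ind(\A)^{\fp}\simeq\A$ because $\A$, having finite colimits, is idempotent complete; conversely $\C^{\fp}$ is an essentially small finitely cocomplete tensor subcategory, using that the unit and the tensor product of finitely presentable objects are again finitely presentable. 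Morphisms match up because a finitely cocontinuous tensor functor prolongs to a cocontinuous tensor functor preserving finitely presentable objects, and conversely restricts. Granting this biequivalence, the first assertion is immediate: a biequivalence preserves bicategorical colimits and its pseudo-inverse transports diagrams back, so $\LFP_{\otimes}$ is bicategorically cocomplete and, for any diagram $D$, one has $\operatorname{bicolim}D\simeq\Ind\!\big(\operatorname{bicolim}D^{\fp}\big)$, the inner bicolimit being supplied by \cref{myThm}.

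For the second assertion I would verify directly that the cocone produced this way is already a bicolimit in $\Cat_{\c\otimes}$, which simultaneously establishes existence and preservation. Write $\A_i \mathrel{:=} D(i)^{\fp}$ and let $\A \mathrel{:=}\operatorname{bicolim}_i\A_i$ in $\cat_{\fc\!\otimes}$, so the candidate colimit is $\Ind(\A)$ with cocone $\Ind(\A_i)\to\Ind(\A)$. The crucial tool is the universal property of the Ind-completion in its monoidal form: for any cocomplete tensor category $\D$, restriction along $\A\hookrightarrow\Ind(\A)$ induces an equivalence between cocontinuous tensor functors $\Ind(\A)\to\D$ and finitely cocontinuous tensor functors $\A\to\D$, with inverse given by left Kan extension. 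I would then chain the equivalences, for every $\D\in\Cat_{\c\otimes}$,
\begin{align*}
\Cat_{\c\otimes}\big(\Ind(\A),\D\big) &\simeq \Rex_{\otimes}(\A,\D)\simeq\lim_i\Rex_{\otimes}(\A_i,\D)\\
&\simeq\lim_i\Cat_{\c\otimes}\big(\Ind(\A_i),\D\big),
\end{align*}
where $\Rex_{\otimes}(-,\D)$ denotes the category of finitely cocontinuous tensor functors, the middle equivalence being the universal property of $\A=\operatorname{bicolim}_i\A_i$ and the outer two the Ind-completion property. This identifies $\Ind(\A)$, with its cocone, as the bicolimit of $D$ computed in $\Cat_{\c\otimes}$.

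The step requiring the most care is the middle equivalence. The universal property of the bicolimit $\A$ furnished by \cref{myThm} is a priori only tested against essentially small targets, whereas here $\D$, and hence $\Rex_{\otimes}(\A_i,\D)$, involves a possibly large category. I would address this by appealing to the explicit construction of the colimit in the proof of \cref{myThm} --- a free finitely cocomplete tensor category on the diagram data, subject to relations --- whose universal property is insensitive to the size of the target and so persists for large $\D$ (equivalently, one passes to a larger universe and observes that the same object computes the bicolimit in the large-category variant). A secondary point demanding attention is the coherent bookkeeping of the symmetric monoidal structure throughout: that the prolongation and restriction equivalences respect the symmetries and the pseudonatural transformations and modifications constituting the cocones, so that the displayed equivalences are genuinely pseudonatural in $\D$. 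Once these are in place, the result follows.
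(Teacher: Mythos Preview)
Your proposal is correct and follows the same overall strategy as the paper: deduce bicategorical cocompleteness of $\LFP_{\otimes}$ from the biequivalence $\Ind\colon\cat_{\fc\!\otimes}\simeq\LFP_{\otimes}$ (this is exactly the paper's \cref{lfpcocomplete}), and then verify directly that $\Ind$ of the small bicolimit has the required universal property already in $\Cat_{\c\otimes}$ (this is \cref{colimremain}). The one substantive difference is how the size issue in the middle equivalence is dispatched. You appeal to the explicit construction being insensitive to the size of the target, or alternatively to a change of universe; the paper instead uses the observation of \cref{smallred} that for small $\A$ and arbitrary cocomplete $\D$ one has $\Hom_{\fc\!\otimes}(\A,\D)\simeq\varinjlim_{\D'}\Hom_{\fc\!\otimes}(\A,\D')$, where $\D'$ ranges over small full subcategories closed under finite colimits and finite tensor products, and then commutes this filtered colimit past the (small) weighted limit that encodes the universal property. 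The paper's device is self-contained and avoids both re-inspecting the construction and universe bookkeeping; your route is also valid, though the sentence ``whose universal property is insensitive to the size of the target'' would need to be cashed out by actually revisiting the Kelly-style construction in the proofs of \cref{coprodtensor}, \cref{coins}, \cref{coequifiers}. A minor further point: the paper carries out the verification for arbitrary weighted bicolimits $X *_b \A$, whereas your chain of equivalences is written for conical ones; the extension is routine.
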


This paper started as an appendix in the upcoming paper \cite{Bra20}, which continues the author's study of the interplay of tensor category theory and algebraic geometry via quasi-coherent sheaves \cite{Bra14}. But it seems reasonable to extract the purely category theoretic results and present them here in detail.

Essentially all results here are category theoretic folklore. In particular, \cref{myThm} can be obtained from $2$-dimensional monad theory \cite{BKP89}. For example, the constructions in \cite[Section 6]{BKP89} show that $\cat_{\fc\!\otimes}$ is the category of algebras of a finitary $2$-monad on the complete and cocomplete $2$-category $\cat$, so it is bicategorically cocomplete by \cite[Theorem 5.8]{BKP89}. But we will avoid this path and keep the proofs as direct and elementary as possible. For the same reason we avoid the usage of codescent objects \cite{Lac02}. Although they allow a construction of bicategorical pushouts of tensor categories without $2$-dimensional monad theory, checking all the details is cumbersome and requires further results from enriched category theory. Our proofs will perhaps be less elegant and do not follow the state of the art of category theory, but they are quite self-contained and hopefully easy to follow. We also review the necessary definitions and examples of bicategorical colimits in a preliminary section.

Some of our constructions of bicatetorical colimits of tensor categories may be seen as a categorificiation of the well-known constructions of colimits of commutative monoids or commutative rings. But one has to be more attentive to coherence data and coherence conditions. This also implies that the construction of bicategorical pushouts, for example, has to be done in two steps actually, the second one being not visible in the $1$-categorical setting, because it involves certain coequifiers. Schäppi also remarks in \cite[Section 5.6]{Sch15} that, for example, the $2$-category $\cat_{\fc\!\otimes/\IK}$ is bicategorically complete and cocomplete since it is the category of symmetric pseudomonoids in the bicategorically complete and cocomplete $2$-category $\cat_{\fc\!/\IK}$ and the usual arguments for existence of limits and colimits in the category of commutative algebras can be categorified. This paper explains in more detail how that categorification looks like.
 
I would like to thank Steve Lack, Michael Shulman, Alexander Campbell and Daniel Schäppi for their helpful comments on the topic.

\tableofcontents

\section{Preliminaries}
\label{sec:prelim}

\subsection{Tensor categories}

For us, a \emph{tensor category} is a symmetric monoidal category, and a \emph{tensor functor} is a strong symmetric monoidal functor \cite[Chapter XI]{ML98}. A \emph{(finitely) cocomplete tensor category} is a tensor category whose underlying category is (finitely) cocomplete in such a way that $\otimes$ preserves (finite) colimits in each variable. Such tensor categories may be seen as categorified rigs \cite{BD98,CJF13}. If $\IK$ is a commutative ring, a \emph{$\IK$-linear (cocomplete) tensor category} is a (cocomplete) tensor category whose underlying category is $\IK$-linear (i.e.\ enriched in $\Mod(\IK)$) such that the tensor product is a $\IK$-linear functor of both variables.

We also recall that a \emph{locally presentable tensor category} is a cocomplete tensor category whose underlying category is locally presentable in the sense of \cite{AR94,GU71}. They are automatically closed by Freyd's special adjoint functor theorem, so that the internal Hom-objects exist, which we denote by $\HOM$. A \emph{locally finitely presentable tensor category} is a cocomplete tensor category whose underlying category is locally finitely presentable and such that the finitely presentable objects are closed under finite tensor products \cite{Kel82}. In particular, it is required that the unit object (the empty tensor product) is finitely presentable. 

Since we are interested only in category theoretic notions which are invariant under equivalences, we abbreviate ``essentially small'' by ``small''. For instance, the category $\Mod_{\fp}(\IK)$ of finitely presentable $\IK$-modules is small for us.
 
We will denote by $\cat$, $\cat_{\fc}$, $\cat_{\otimes}$, $\cat_{\fc\!\otimes}$ the $2$-categories of small (finitely cocomplete) (tensor) categories and (finitely cocontinuous) (tensor) functors; see \cite{KR74} for an introduction to $2$-categories. A more common notation for $\cat_{\fc}$ is $\Rex$. The corresponding $2$-categories of $\IK$-linear structured categories will be denoted by $\cat_{\IK}$, $\cat_{\fc\!/ \IK}$, $\cat_{\otimes/ \IK}$, $\cat_{\fc\!\otimes/ \IK}$. These are the $2$-categories we will focus on. We will denote their Hom-categories in a similar way; for example $\Hom_{\fc\!\otimes}(\A,\B)$ denotes the category of finitely cocontinuous tensor functors from $\A$ to $\B$.

Occasionally we will also discuss the large variants: $\Cat_{\c}$ and $\Cat_{\c\otimes}$ denote the $2$-categories of cocomplete (tensor) categories. The (non-full) sub-$2$-categories of locally finitely presentable (tensor) categories together with cocontinuous (tensor) functors preserving finitely presentable objects are denoted by $\LFP$ and $\LFP_{\otimes}$. Again there are $\IK$-linear variants.
 
\subsection{Bicategorical colimits}
 
We refer to \cite{Ben67} for basic bicategorical concepts, and to \cite{Kel89,Str80,Str87} for the $2$-categorical and bicategorical (co-)limit notions which we recall in this section. Bicategorical limits and colimits are classically known as bilimits and bi\-co\-limits. However we avoid this terminology (following Schäppi \cite{Sch18} for example), mainly because of its overlap with already existing terms where ``bi'' refers to something which is two-sided (as in ``biproducts'').

The binary \emph{bicategorical coproduct} of two objects $A,B$ of a bicategory is an object $A \sqcup B$ with two morphisms $\iota_A : A \to A \sqcup B$, $\iota_B : B \to A \sqcup B$, inducing for all objects $T$ an equivalence of categories 
\[\Hom(A \sqcup B,T) \myiso \Hom(A,T) \times \Hom(B,T).\]
If this was even an isomorphism, then $A \sqcup B$ would be a binary \emph{$2$-categorical coproduct}. Generally, speaking, bicategorical universal properties talk about equivalences of categories, whereas $2$-categorical (i.e.\ $\cat$-enriched) universal properties talk about isomorphisms of categories. More generally, we can define the \emph{bicategorical coproduct} $\coprod_{i \in I} A_i$ of an arbitrary family of objects $(A_i)_{i \in I}$. The case $I=\emptyset$ corresponds to a \emph{bicategorical initial object}.

Given a directed preorder $(I,\leq)$, objects $A_i$ for $i \in I$, morphisms $f_{i,j} : A_i \to A_j$ for $i \leq j$ and isomorphisms $\id_{A_i} \myiso f_{i,i}$, $f_{j,k} \circ f_{i,j} \myiso f_{i,k}$ satisfying three evident coherence conditions, a \emph{bicategorical directed colimit} of these data is an object $A$ with morphisms $u_i : A_i \to A$ and isomorphisms $u_j \circ f_{i,j} \myiso f_i$ which are compatible with the given isomorphisms and such that the obvious bicategorical universal property is satisfied. Arbitrary bicategorical coproducts can be written as bicategorical directed colimits of finite bicategorical coproducts, assuming these exist.
 
If two morphisms $f : C \to A$, $g : C \to B$ are given, a \emph{bicategorical pushout} $A \sqcup_C B$ has two morphisms $\iota_A : A \to A \sqcup_C B$, $\iota_B : B \to A \sqcup_C B$ as well as an isomorphism $\alpha : \iota_A \circ f \myiso \iota_B \circ g$ such that for all objects $T$ we obtain an equivalence of categories
\[\Hom(A \sqcup_C B,T) \myiso \Hom(A,T) \times_{\Hom(C,T)} \Hom(B,T).\]
Here, the right hand side is a \emph{bicategorical pullback} of categories, so that its objects are triples $(u : A \to T,\, v : B \to T,\, \beta : u \circ f \myiso v \circ g)$.

If $f,g : A \rightrightarrows B$ is a parallel pair of morphisms in a bicategory, a \emph{bicategorical coequalizer} is a morphism $p : B \to C$ equipped with an isomorphism $\alpha : p \circ f \myiso p \circ g$ such that the corresponding bicategorical universal property is satisfied: For every object $T$ it is required that $(p,\alpha)$ induces an equivalence of categories between $\Hom(C,T)$ and the category of pairs $(q,\beta)$, where $q \in \Hom(B,T)$ and $\beta : q \circ f \myiso q \circ g$. 

If more generally $(f_i,g_i : A \rightrightarrows B)_{i \in I}$ is a family of parallel pairs, a \emph{multiple bicategorical coequalizer} is a bicategorically universal morphism $p : B \to C$ equipped with a family of isomorphisms $(\alpha_i : p \circ f_i \myiso p \circ g_i)_{i \in I}$. If the coproduct $\coprod_{i \in I} A$ exists, the multiple bicategorical coequalizer can be simply seen as the (single) bicategorical coequalizer of the parallel pair $(f_i)_{i \in I},(g_i)_{i \in I} : \coprod_{i \in I} A \rightrightarrows B$. Alternatively, if $I$ is finite, it may be simply realized as a composition of single bicategorical coequalizers.
 
A \emph{bicategorical coinserter} of a parallel pair $f,g : A \rightrightarrows B$ is defined almost like a bicategorical coequalizer $(p,\alpha)$, except that $\alpha : p \circ f \to p \circ g$ is merely a morphism. For this reason bicategorical coequalizers are also known as bicategorical coisoinserters. Again there is a notion of a multiple bicategorical coinserter, which can be reduced to single bicategorical coinserters if the index set is finite or if bicategorical coproducts exist. In case there is a morphism $p \circ f \to p \circ g$, we say that $p$ \emph{coinserts} $f$ and $g$ (via the morphism).
 
We know from $1$-dimensional category theory that, assuming that binary coproducts exist, one can construct pushouts from coequalizers and vice versa. The same is true in the bicategorical setting. We omit the easy proofs.
   
\begin{lemma} \label{pushviacoeq}
If a bicategory has binary bicategorical coproducts and bicategorical coequalizers, then it has bicategorical pushouts. Specifically, the bicategorical pushout of two morphisms $f : C \to A$, $g : C \to B$ is given by the bicategorical coequalizer of $\iota_A \circ f : C \to A \sqcup B$ and $\iota_B \circ g : C \to A \sqcup B$. \hfill $\square$
\end{lemma}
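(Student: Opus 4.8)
The plan is to chain the two given universal properties and read off the pushout. Write $p : A \sqcup B \to Q$, together with the isomorphism $\alpha : p \circ (\iota_A \circ f) \myiso p \circ (\iota_B \circ g)$, for the given bicategorical coequalizer of $\iota_A \circ f$ and $\iota_B \circ g$. As the candidate pushout cocone I take $\iota_A' := p \circ \iota_A$, $\iota_B' := p \circ \iota_B$, together with the isomorphism $\alpha' : \iota_A' \circ f \myiso \iota_B' \circ g$ obtained from $\alpha$ by re-associating with the coherence isomorphisms of the bicategory, i.e.\ as the composite $(p\circ\iota_A)\circ f \myiso p\circ(\iota_A\circ f) \myiso p\circ(\iota_B\circ g) \myiso (p\circ\iota_B)\circ g$, the middle arrow being $\alpha$. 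Since this cocone is fixed, the induced comparison functor is automatically compatible with precomposition, so it suffices to show that for each object $T$ the functor
\[\Phi_T : \Hom(Q,T) \longrightarrow \Hom(A,T)\times_{\Hom(C,T)}\Hom(B,T),\qquad t \mapsto \bigl(t\circ\iota_A',\, t\circ\iota_B',\, \gamma_t\bigr),\]
is an equivalence, where $\gamma_t$ is the isomorphism $(t\circ\iota_A')\circ f \myiso (t\circ\iota_B')\circ g$ produced from $\alpha'$ by whiskering with $t$ and re-associating.

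I would factor $\Phi_T$ as a composite of two equivalences. First, introduce the auxiliary category $\E_T$ whose objects are pairs $(q,\beta)$ with $q \in \Hom(A\sqcup B, T)$ and $\beta : q\circ\iota_A\circ f \myiso q\circ\iota_B\circ g$, the morphisms being the $2$-cells compatible with the $\beta$'s. This is precisely the indexing category appearing in the coequalizer property, so by hypothesis the comparison functor $\Hom(Q,T)\to\E_T$, $t\mapsto(t\circ p,\, \beta_t)$, is an equivalence. Second, I observe that both $\E_T$ and the pullback $\Hom(A,T)\times_{\Hom(C,T)}\Hom(B,T)$ are \emph{iso-inserters}: $\E_T$ is the iso-inserter of the two functors $q\mapsto q\circ\iota_A\circ f$ and $q\mapsto q\circ\iota_B\circ g$ from $\Hom(A\sqcup B, T)$ to $\Hom(C,T)$, while the pullback is the iso-inserter of $(u,v)\mapsto u\circ f$ and $(u,v)\mapsto v\circ g$ from $\Hom(A,T)\times\Hom(B,T)$ to $\Hom(C,T)$.

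The coproduct equivalence $E_T : \Hom(A\sqcup B, T)\myiso \Hom(A,T)\times\Hom(B,T)$, $q\mapsto(q\circ\iota_A, q\circ\iota_B)$, intertwines these two parallel pairs up to the associativity isomorphisms, i.e.\ the functor $(u,v)\mapsto u\circ f$ composed with $E_T$ is naturally isomorphic to $q\mapsto q\circ\iota_A\circ f$, and likewise on the other side. Since iso-inserters are invariant under an equivalence of the common source that matches the two functors up to natural isomorphism, $E_T$ lifts to an equivalence $\E_T \myiso \Hom(A,T)\times_{\Hom(C,T)}\Hom(B,T)$. Composing this with the coequalizer equivalence $\Hom(Q,T)\myiso\E_T$ yields an equivalence, and unwinding the construction shows that the composite sends $t$ to $(t\circ\iota_A',\, t\circ\iota_B',\, \gamma_t)$ up to the associators, hence is naturally isomorphic to $\Phi_T$. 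Therefore $\Phi_T$ is an equivalence, as required.

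The main obstacle is purely bookkeeping: in a bicategory none of the reassociated composites $(p\circ\iota_A)\circ f$, $p\circ(\iota_A\circ f)$, and so on agree on the nose, so one must insert and track the associators (and, if coproduct and coequalizer are only given pseudo-functorially, the unitors) at every stage, and verify that the composite equivalence really is the functor induced by the chosen cocone $(\iota_A',\iota_B',\alpha')$ rather than some other re-associated variant. In a strict $2$-category all these coherence isomorphisms become identities and the argument collapses to the familiar $1$-categorical construction of pushouts from coproducts and coequalizers, which is why the proof can safely be omitted.
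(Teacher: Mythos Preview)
Your argument is correct and is exactly the standard one the paper has in mind; the paper in fact omits the proof entirely (the statement ends with $\square$ and the surrounding text says ``We omit the easy proofs''), so there is nothing to compare against beyond noting that your chaining of the coequalizer and coproduct equivalences through the auxiliary iso-inserter $\E_T$ is precisely the expected route.
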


\begin{lemma} \label{coeqviapush}
If a bicategory has binary bicategorical coproducts and bicategorical pushouts, then it has bicategorical coequalizers. Specifically, the bicategorical coequalizer of a parallel pair $f,g : A \rightrightarrows B$ is the bicategorical pushout of $(f,g) : A \sqcup A \to B$ and the codiagonal morphism $\nabla : A \sqcup A \to A$. \hfill $\square$
\end{lemma}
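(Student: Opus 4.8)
The plan is to verify directly that the object $P := B \sqcup_{A \sqcup A} A$, formed as the bicategorical pushout of $(f,g) : A \sqcup A \to B$ and $\nabla : A \sqcup A \to A$, together with the morphism $p := \iota_B : B \to P$, is a bicategorical coequalizer of $f$ and $g$. This categorifies the $1$-categorical construction, so the only real work lies in keeping track of the coherence isomorphisms.

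First I would record the coherence data. Writing $\iota_1, \iota_2 : A \to A \sqcup A$ for the coproduct inclusions, the induced morphisms satisfy $(f,g) \circ \iota_1 \myiso f$, $(f,g) \circ \iota_2 \myiso g$, $\nabla \circ \iota_1 \myiso \id_A$ and $\nabla \circ \iota_2 \myiso \id_A$, while the pushout supplies $\iota_A : A \to P$ and an isomorphism $\gamma : \iota_B \circ (f,g) \myiso \iota_A \circ \nabla$. To build the structure $2$-cell $\alpha : p \circ f \myiso p \circ g$, I would paste these together: whiskering $\gamma$ on the right by $\iota_1$ and inserting the associativity and coherence isomorphisms above yields $p \circ f = \iota_B \circ f \myiso \iota_A$, and whiskering by $\iota_2$ yields $p \circ g = \iota_B \circ g \myiso \iota_A$; composing the first with the inverse of the second defines $\alpha$.

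Next I would establish the universal property for a fixed test object $T$. The pushout property gives an equivalence between $\Hom(P,T)$ and the bicategorical pullback $\Hom(B,T) \times_{\Hom(A \sqcup A,T)} \Hom(A,T)$, whose objects are triples $(u, v, \delta)$ with $\delta : u \circ (f,g) \myiso v \circ \nabla$. Restricting along $\iota_1, \iota_2$ and invoking the coproduct equivalence $\Hom(A \sqcup A,T) \simeq \Hom(A,T) \times \Hom(A,T)$---which, being an equivalence, is in particular fully faithful---such a $\delta$ is equivalent to a pair of isomorphisms $u \circ f \myiso v$ and $u \circ g \myiso v$. Composing the first with the inverse of the second produces a $2$-cell $\beta : u \circ f \myiso u \circ g$, and conversely $\beta$ recovers the whole triple upon setting $v := u \circ g$. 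This identifies the pullback category with the category of pairs $(q,\beta)$ appearing in the definition of the bicategorical coequalizer, and composing the two equivalences exhibits $\Hom(P,T)$ as equivalent to it.

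The step I expect to demand the most care is confirming that this composite equivalence is naturally isomorphic to the comparison functor $h \mapsto (h \circ p,\, \beta_h)$ canonically induced by $(p,\alpha)$, where $\beta_h$ denotes the whiskering of $\alpha$ by $h$. This reduces to tracing an arbitrary $h \in \Hom(P,T)$ through both equivalences and checking, by a pasting computation, that the resulting $2$-cell agrees with $h$ whiskered against $\alpha$---an identity that holds precisely because $\alpha$ was itself assembled from $\gamma$ and the coproduct coherences. Since every isomorphism involved is forced by the universal properties of the coproduct and the pushout, no genuine obstruction arises; the verification is purely coherence bookkeeping, which is why the proof can be safely omitted.
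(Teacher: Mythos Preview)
Your argument is correct and is precisely the verification the paper has in mind: the statement already specifies the construction, and the paper omits the proof entirely, marking the lemma with~$\square$ after remarking that the easy proofs are left to the reader. Your write-up therefore just supplies the coherence bookkeeping that was deliberately suppressed.
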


The idea of a coequalizer is to make two parallel morphisms equal. The corresponding construction for $2$-morphisms is known as a coequifier: Given morphisms $f,g : A \rightrightarrows B$ and morphisms $\alpha,\beta : f \rightrightarrows g$, a \emph{bicategorical coequifier} of $\alpha,\beta$ is a morphism $p : B \to C$ with $p \circ \alpha = p \circ \beta$ (both sides are morphisms $p \circ f \to p \circ g$), such that the evident bicategorical universal property holds: For every object $T$ we get an equivalence between $\Hom(C,T)$ and the category of those $q : B \to T$ satisfying $q \circ \alpha = q \circ \beta$, in which case we say that $q$ \emph{coequifies} $\alpha,\beta$. Again there is a notion of a multiple bicategorical coequifier, which can be reduced to single bicategorical coequifiers if the index set is finite or if bicategorical coproducts exist.

Given two morphisms $f,g : A \rightrightarrows B$ and a morphism $\alpha : f \to g$, a \emph{bicategorical coinverter} of $\alpha$ is a bicategorically universal morphism $p : B \to C$ such that $p \circ \alpha : p \circ f \to p \circ g$ is an isomorphism. In $\cat$ bicategorical coinverters are known as \emph{localizations}.

\begin{lemma} \label{coeqviains}
Assume that a bicategory has bicategorical coequifiers and bicategorical coinserters. Then it has bicategorical coinverters and bicategorical coequalizers.
\end{lemma}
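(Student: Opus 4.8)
The plan is to build bicategorical coinverters out of the given coinserters and coequifiers, and then to obtain bicategorical coequalizers by composing a coinserter with a coinverter. For the coinverter, suppose $\alpha : f \to g$ is a $2$-cell between $f,g : A \rightrightarrows B$; we want a universal $p$ making $p \circ \alpha$ invertible. First I would form the coinserter $(p_1,\delta)$ of the \emph{reversed} pair $g,f$, so that $p_1 : B \to C_1$ comes equipped with a $2$-cell $\delta : p_1 \circ g \to p_1 \circ f$. Together with the whiskered $2$-cell $p_1\alpha : p_1 \circ f \to p_1 \circ g$ we now have $2$-cells in both directions, but $\delta$ need not yet be a two-sided inverse of $p_1\alpha$. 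To impose the two inverse equations I take two coequifiers in succession: let $p_2 : C_1 \to C_2$ be the coequifier of the parallel pair of $2$-cells $\delta \circ (p_1\alpha)$ and $\id$ (both $p_1 \circ f \to p_1 \circ f$), and then let $p_3 : C_2 \to C_3$ be the coequifier of $(p_2 p_1\alpha) \circ (p_2\delta)$ and $\id$ (both $p_2 p_1 \circ g \to p_2 p_1 \circ g$). Setting $p := p_3 \circ p_2 \circ p_1$, the whiskered $2$-cell $p\alpha$ then has $p_3 p_2 \delta$ as a genuine two-sided inverse, so $p$ coinverts $\alpha$.

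For the universal property of $p$, suppose $q : B \to T$ is such that $q\alpha$ is invertible. Its inverse $(q\alpha)^{-1} : q \circ g \to q \circ f$ is in particular a $2$-cell, so it exhibits $q$ as coinserting the pair $g,f$; by the universal property of $(p_1,\delta)$ there is $q_1 : C_1 \to T$, essentially unique, with $q_1 \circ p_1 \cong q$ and $q_1\delta$ corresponding to $(q\alpha)^{-1}$. Since $(q\alpha)^{-1}$ is a genuine inverse, the two composites $(q\alpha)^{-1}\circ q\alpha$ and $q\alpha\circ(q\alpha)^{-1}$ are identities, which is exactly the condition that $q_1$ coequifies each of the two parallel pairs killed by $p_2$ and $p_3$. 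Hence $q_1$ factors, essentially uniquely, through $p_3\circ p_2$, and this yields the desired factorization of $q$ through $p$. The essential uniqueness of the whole factorization follows by chaining together the essential-uniqueness clauses of the coinserter and of the two coequifiers.

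With coinverters in hand, I would construct the bicategorical coequalizer of $f,g : A \rightrightarrows B$ as a coinserter followed by a coinverter. Take the coinserter $(p_0,\alpha_0)$ of $f,g$, so $\alpha_0 : p_0 \circ f \to p_0 \circ g$, and then take the coinverter $r : C_0 \to C$ of $\alpha_0$. Put $p := r \circ p_0$ and $\alpha := r\alpha_0$, which is now invertible. Given any pair $(q,\beta)$ with $q : B \to T$ and $\beta : q \circ f \myiso q \circ g$ an isomorphism, the underlying $2$-cell $\beta$ lets $q$ factor through the coinserter as $\bar q : C_0 \to T$ with $\bar q\alpha_0 \cong \beta$; because $\beta$ is invertible, $\bar q$ coinverts $\alpha_0$ and so factors through $r$, producing a morphism $\tilde q : C \to T$ with $\tilde q \circ p \cong q$ and $\tilde q\alpha \cong \beta$. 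Combining the two universal properties gives the required equivalence of categories, so $(p,\alpha)$ is the bicategorical coequalizer.

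I expect the constructions themselves to be routine; the main obstacle is the bookkeeping in the universal-property verifications. Concretely, one has to carry the coherence isomorphisms (such as $q_1 \circ p_1 \cong q$ and $\bar q\alpha_0 \cong \beta$) through each whiskering and composite, and confirm that the successive factorizations through $p_1,p_2,p_3$ (respectively $p_0$ and $r$) paste together into a factorization that is unique up to a \emph{unique} isomorphism. None of this is conceptually hard, but keeping the $2$-cells and their invertibility witnesses aligned across the several stages is the delicate part of the argument.
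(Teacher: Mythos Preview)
Your construction is correct and essentially identical to the paper's. For the coinverter, the paper also forms the coinserter of the reversed pair $g,f$ to obtain a $2$-cell $\beta : p g \to p f$ and then takes the (multiple) bicategorical coequifier imposing the two equations $(p\alpha)\circ\beta = \id$ and $\beta\circ(p\alpha) = \id$; your two successive coequifiers realize exactly this multiple coequifier, just in the opposite order. For the coequalizer, the paper likewise takes the coinserter of $f,g$ followed by the coinverter of the inserted $2$-cell.
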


\begin{proof}
To construct the bicategorical coinverter of $\alpha : f \to g  : A \rightrightarrows B$, we start with the bicategorical coinserter $(p : B \to C,\, \beta : p \circ g \to p \circ f)$ of $g,f$ and then consider the multiple bicategorical coequifier $q : C \to D$ of $((p \circ \alpha) \circ \beta, \id_{p \circ g})$ and $(\beta \circ (p \circ \alpha),\id_{p \circ f})$. Then $q \circ p : B \to D$ is the bicategorical coinverter of $\alpha$.

To construct the bicategorical coequalizer of $f,g : A \rightrightarrows B$, we first form the bicategorical coinserter $(p : B \to C , \alpha : p \circ f \to p \circ  g)$ of $f,g$ and then the bicategorical coinverter of $\alpha$.
\end{proof}

\begin{rem} \label{pushviacoprodcoeqcoins}
\cref{pushviacoeq} and \cref{coeqviains} imply that a bicategory with binary bicategorical coproducts, bicategorical coinserters and bicategorical coequifiers has bicategorical pushouts. For our application in \cite{Bra20} only the latter are relevant, so the readers coming from there might as well skip the rest of this section, which is about more general types of bicategorical colimits.
\end{rem}

If $X$ is a small category and $A$ is an object of a bicategory $\C$, then the \emph{bicategorical tensor product} $X *_b A$ is an object of $\C$ with a functor $X \to \Hom_\C(A,X *_b A)$, inducing for every other object $T \in \C$ an equivalence of categories
\[\Hom_\C(X *_b A, T) \myiso \Hom\bigl(X,\Hom_\C(A,T)\bigr).\]

\begin{lemma} \label{bicattens}
If a bicategory has bicategorical coproducts, bicategorical coinserters and bicategorical coequifiers, then it has bicategorical tensor products.
\end{lemma}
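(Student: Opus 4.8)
The plan is to build $X *_b A$ in three stages that mirror the decomposition of a functor $X \to \Hom_\C(A,T)$ into its action on objects, its action on morphisms, and the functoriality axioms. Throughout I fix a test object $T$ and abbreviate $\K := \Hom_\C(A,T)$; the goal is an equivalence $\Hom_\C(X *_b A, T) \myiso \Hom(X,\K)$, natural in $T$. All the index sets that appear ($\Ob X$, the set of morphisms of $X$, and the set of composable pairs) are small because $X$ is small, and since we are assuming bicategorical coproducts, the multiple coinserters and multiple coequifiers used below all exist, as discussed in \cref{sec:prelim}.

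First I would take the bicategorical coproduct $A_0 := \coprod_{x \in \Ob X} A$, with coprojections $\iota_x : A \to A_0$. Its universal property yields $\Hom_\C(A_0,T) \myiso \prod_{x \in \Ob X} \Hom_\C(A,T)$, under which an object corresponds to a family $(F(x))_{x \in \Ob X}$ of objects of $\K$, i.e.\ to the object part of a prospective functor. Next I would record the morphism data by a multiple bicategorical coinserter: for every morphism $\phi : x \to y$ of $X$ consider the parallel pair $\iota_x, \iota_y : A \rightrightarrows A_0$, and let $(p : A_0 \to A_1,\ (\lambda_\phi : p \circ \iota_x \to p \circ \iota_y)_\phi)$ be the multiple bicategorical coinserter of this family. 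Composing the two equivalences, an object of $\Hom_\C(A_1,T)$ becomes a family $(F(x))_x$ together with a $2$-cell $\mu_\phi : F(x) \to F(y)$ for each $\phi : x \to y$ — a morphism from the underlying quiver of $X$ into $\K$, with no functoriality yet imposed. Crucially, because the coinserter's universal property is an equivalence of categories, a morphism between two such objects is exactly a family of $2$-cells $\theta_x : F(x) \to F'(x)$ whose compatibility with the $\mu_\phi$ is precisely the naturality square $\mu'_\phi \circ \theta_x = \theta_y \circ \mu_\phi$ for each $\phi$.

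Finally I would impose functoriality by a multiple bicategorical coequifier. Form the coequifier $q : A_1 \to A_2$ of the following two families of parallel $2$-cells between $1$-cells $A \rightrightarrows A_1$: for each object $x$ the pair $(\lambda_{\id_x},\ \id_{p \circ \iota_x})$, and for each composable pair $\phi : x \to y$, $\psi : y \to z$ the pair $(\lambda_{\psi \phi},\ \lambda_\psi \circ \lambda_\phi)$, where here $\circ$ denotes the vertical composition of $2$-cells. Setting $X *_b A := A_2$, the coequifier's universal property says an object of $\Hom_\C(A_2,T)$ is an object of $\Hom_\C(A_1,T)$ whose data additionally satisfies $\mu_{\id_x} = \id_{F(x)}$ and $\mu_{\psi\phi} = \mu_\psi \circ \mu_\phi$ — that is, a genuine functor $X \to \K$; and since the coequifier leaves the morphisms untouched, these correspond to natural transformations. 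The structure functor $X \to \Hom_\C(A, X *_b A)$ then sends $x \mapsto q \circ p \circ \iota_x$ and $\phi \mapsto q \circ \lambda_\phi$, the coequifier equations being exactly what makes this assignment functorial. Chaining the three equivalences gives $\Hom_\C(X *_b A, T) \myiso \Hom(X,\K)$.

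The main obstacle I expect is bookkeeping rather than anything conceptual: one must verify that the three equivalences compose to an equivalence that is (pseudo)natural in $T$, and in particular that the morphism-level data is transported correctly, so that morphisms of $\Hom_\C(X *_b A, T)$ end up being precisely the natural transformations of functors $X \to \K$. This is where it matters that each universal property is an equivalence of categories and not merely a bijection on objects. A secondary point is to confirm that no coherence isomorphisms of the bicategory interfere: the only composition used in the coequifier step is the vertical composition of $2$-cells sharing source and target $1$-cells $A \to A_1$, which is strictly associative and unital, so the functoriality equations are clean, and the associators and unitors enter only through the already-packaged universal properties.
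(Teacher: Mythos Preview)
Your construction is correct and is precisely the argument the paper has in mind: the paper's proof consists of a single sentence pointing to Kelly's \cite[Proposition 4.4]{Kel89} for the $2$-categorical case and remarking that ``the proof can easily be adapted'', and what you have written is exactly that adaptation --- coproduct over objects, coinserter over arrows, coequifier over the unit and composition relations. So there is no genuine difference in approach, only in level of detail; you have spelled out what the paper leaves to the reference.
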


\begin{proof}
Kelly has proven the corresponding statement for $2$-categorical limits (and hence also colimits) in \cite[Proposition 4.4]{Kel89}, but the proof can easily be adapted.
\end{proof}
 
The notion of a bicategorical tensor product can be generalized to a \emph{bicategorical weighted colimit}, which in fact encapsulates all mentioned examples, but conversely can also be built from them. Let $\C$ be a bicategory. Let $\J$ be a small bicategory and $X : \J^{\op} \to \cat$ be a homomorphism of bicategories, called a \emph{weight}. If $A : \J \to \C$ is a homomorphism of bicategories, then $X *_b A$ is an object of $\C$ with natural equivalences of categories
\[\Hom_\C(X *_b A, T) \myiso \Hom\bigl(X,\Hom_{\C}(A(-),T)\bigr).\]
If $X *_b A$ always exists, we say that $\C$ is \emph{bicategorically cocomplete}.

\begin{prop} \label{allcolim}
If a bicategory has bicategorical coproducts, bicategorical coinserters and bicategorical coequifiers, then it is bicategorical cocomplete.
\end{prop}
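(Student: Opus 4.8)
The plan is to reduce an arbitrary bicategorical weighted colimit to the three hypothesised colimit types, following and adapting Kelly's reduction for $2$-categorical limits \cite{Kel89} from isomorphisms to equivalences. The starting point is the bicategorical co-Yoneda presentation of the weight: every $X : \J^{\op} \to \cat$ is canonically a colimit of representables $\J(-,j)$. Since weighted colimits commute with colimits in the weight variable and $\J(-,j) *_b A \myiso A(j)$, I would deduce a natural equivalence
\[X *_b A \myiso \int^{j} X(j) *_b A(j),\]
exhibiting a general weighted colimit as a coend of the tensor products $X(j) *_b A(j)$. By \cref{bicattens} all of these tensor products exist, so it remains to construct the coend.

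For this I would realize the coend, in the usual way, as the bicategorical coequalizer of the canonical parallel pair
\[\coprod_{f : j \to j'} X(j') *_b A(j) \rightrightarrows \coprod_{j} X(j) *_b A(j),\]
the two arrows being induced by $X(f)$ and by $A(f)$, and then impose the remaining $2$-cells and composition constraints of $\J$ by further bicategorical coequifiers. The coproducts exist by hypothesis and the bicategorical coequalizer exists by \cref{coeqviains} (equivalently, one may use coinserters followed by a coinverter); hence the coend, and therefore $X *_b A$, exists. Finally one verifies that the resulting universal property is the required one, namely that $\Hom_\C(X *_b A, T)$ is equivalent to $\Hom\bigl(X, \Hom_\C(A(-),T)\bigr)$ naturally in $T$.

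The hard part will be the bicategorical bookkeeping, since every universal property here asserts only an equivalence of $\Hom$-categories rather than an isomorphism. At each stage one must carry along pseudonaturality data and modification conditions instead of strict equalities: the pseudonaturality of the comparison cocones is forced by the coinserters, and the coherence relations among the inserted $2$-cells by the coequifiers — this is precisely the mechanism by which the strict formulas pass to their flexible (pseudo) analogues. Checking that the assembled object genuinely satisfies the weighted universal property up to coherent equivalence, and not merely objectwise, is the main obstacle; this is the exact point at which Kelly's strict computation \cite[Section 4]{Kel89} has to be upgraded, just as in the proof of \cref{bicattens}.
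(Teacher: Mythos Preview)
Your proposal is correct and follows essentially the same route as the paper: both first invoke \cref{bicattens} and \cref{coeqviains} to obtain bicategorical tensor products and bicategorical coequalizers, and then assemble an arbitrary weighted colimit from coproducts, tensor products and coequalizers. The only difference is presentational: the paper simply cites Street \cite{Str80,Str87} (and \cite[Section~6]{Kel89}) for this last assembly step, whereas you sketch that step explicitly via the co-Yoneda/coend formula.
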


\begin{proof}
By \cref{bicattens} the bicategory has bicategorical tensor products, and by \cref{coeqviains} it has bicategorical coequalizers. Street has shown in \cite[Section 1]{Str80}, with a correction in \cite{Str87}, that a bicategory with bicategorical coproducts, bicategorical coequalizers and bicategorical tensor products is bicategorically cocomplete; see also \cite[Section 6]{Kel89}.
\end{proof}
 
For this reason we will concentrate on these three types of bicategorical colimits in the rest of the paper.

\section{Bicategorical coproducts}

In this section we prove that the four $2$-categories of tensor categories mentioned before have arbitrary bicategorical coproducts.
  
\begin{prop} \label{coprodtensor}
The $2$-categories $\cat_{\otimes}$, $\cat_{\fc\!\otimes}$, $\cat_{\otimes/\IK}$, $\cat_{\fc\!\otimes/\IK}$ have binary bicategorical coproducts.
\end{prop}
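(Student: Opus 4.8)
The plan is to categorify the classical fact that the coproduct of two commutative monoids (resp.\ $\IK$-algebras) is their tensor product. In each of the four cases I would take the bicategorical coproduct of $\A$ and $\B$ to be the tensor product $\A \boxtimes \B$ of the two underlying (finitely cocomplete) ($\IK$-linear) categories, equipped with a suitable induced symmetric monoidal structure. Here $\boxtimes$ denotes the tensor product of categories that classifies functors preserving the relevant structure \emph{separately in each variable}: for $\cat_\otimes$ it is simply the cartesian product $\A \times \B$; for $\cat_{\otimes/\IK}$ it is the usual $\IK$-linear (enriched) tensor product, with hom-modules $\Hom_\A(a,a') \otimes_\IK \Hom_\B(b,b')$; and for the finitely cocomplete variants it is the tensor product of finitely cocomplete categories, characterised by a universal functor $\boxtimes\colon \A \times \B \to \A \boxtimes \B$, $(a,b) \mapsto a \boxtimes b$, such that precomposition induces an equivalence between the (finitely cocontinuous, $\IK$-linear) functors $\A \boxtimes \B \to \C$ and the functors $\A \times \B \to \C$ that are (finitely cocontinuous, $\IK$-linear) in each variable. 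I would establish the existence of this last tensor product while proving that $\cat_{\fc}$ and $\cat_{\fc/\IK}$ are bicategorically cocomplete; it is essentially small because $\A$ and $\B$ are.

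First I would equip $\A \boxtimes \B$ with its symmetric monoidal structure. The unit is $\mathbf 1_\A \boxtimes \mathbf 1_\B$, and the tensor product is induced, via the universal property of $\boxtimes$, by $(a \boxtimes b,\, a' \boxtimes b') \mapsto (a \otimes a') \boxtimes (b \otimes b')$: the individual tensor products $\otimes_\A$ and $\otimes_\B$ already factor through $\boxtimes$ (this is precisely what it means for $\A$ and $\B$ to be finitely cocomplete resp.\ $\IK$-linear tensor categories), and the symmetry of $\A$ and $\B$ supplies the middle interchange. The associativity, unit and symmetry constraints are induced componentwise from those of $\A$ and $\B$. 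By construction this tensor product preserves finite colimits / is $\IK$-linear in each variable, so $\A \boxtimes \B$ lies in the correct $2$-category. The inclusions $\iota_\A\colon a \mapsto a \boxtimes \mathbf 1_\B$ and $\iota_\B\colon b \mapsto \mathbf 1_\A \boxtimes b$ are tensor functors of the required kind, with coherence isomorphisms coming from the unit constraints of $\B$ and $\A$.

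It then remains to verify the universal property, i.e.\ that precomposition with $\iota_\A,\iota_\B$ gives an equivalence $\Hom(\A \boxtimes \B, T) \myiso \Hom(\A,T) \times \Hom(\B,T)$ for every object $T$. For the pseudo-inverse I would send a pair $(F,G)$ to the tensor functor $F \boxtimes G$ obtained, via the universal property of $\boxtimes$, from the functor $\A \times \B \to T$, $(a,b) \mapsto F(a) \otimes G(b)$; this is of the required kind in each variable because $F$ and $G$ are and because $\otimes$ in $T$ preserves the relevant structure in each variable. The symmetry of $T$ is exactly what supplies the tensor-functor structure on $F \boxtimes G$, through $F(a)\otimes G(b)\otimes F(a')\otimes G(b') \cong F(a\otimes a')\otimes G(b\otimes b')$. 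On $2$-cells, a pair of monoidal transformations $(\sigma,\tau)$ is sent to the induced transformation $\sigma \boxtimes \tau$ with components $\sigma_a \otimes \tau_b$. The two round-trips are witnessed by the monoidal natural isomorphisms $F(a) \otimes G(\mathbf 1_\B) \cong F(a)$ and, for a given tensor functor $H$, by $H(a \boxtimes b) \cong H(a \boxtimes \mathbf 1_\B) \otimes H(\mathbf 1_\A \boxtimes b)$, which recovers $H$ from its restrictions; checking that these are monoidal and $2$-natural yields that the comparison functor is an equivalence.

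The genuinely new content beyond the $1$-categorical (commutative ring) statement, and the step I expect to be the main obstacle, is twofold. First, in the finitely cocomplete cases one must actually construct the tensor product $\A \boxtimes \B$ of finitely cocomplete categories and know that it classifies functors cocontinuous in each variable; this is where cocompleteness is used essentially and is the technical heart in those cases, the cartesian and plain $\IK$-linear tensor products being immediate. Second, everything above is really an exercise in bookkeeping of coherence data: verifying that the induced $\otimes$ on $\A \boxtimes \B$ satisfies the pentagon, hexagon and unit axioms, that $F \boxtimes G$ is genuinely a symmetric monoidal functor, and that the round-trip isomorphisms are monoidal and $2$-natural. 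None of these verifications is deep, but they are exactly the coherence checks that become invisible after decategorification.
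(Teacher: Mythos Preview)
Your proposal is correct and is essentially the same argument as the paper's: both categorify the fact that the coproduct of commutative monoids is their tensor product, taking $\A \times \B$, $\A \otimes_\IK \B$, $\A \boxtimes \B$, $\A \boxtimes_\IK \B$ in the four cases, with the same inclusions $a \mapsto a \boxtimes 1_\B$, the same pseudo-inverse $(F,G) \mapsto \bigl((a,b) \mapsto F(a) \otimes G(b)\bigr)$, and the same use of the symmetry of the target to supply the tensor structure on $F \boxtimes G$. The only organisational difference is that the paper constructs $\A \boxtimes \B$ explicitly \emph{here} (via Kelly's machinery: the reflective subcategory $\Alg(\Phi) \subseteq \widehat{\A \times \B}$ of presheaves turning the relevant cocones into limit cones, then the closure under finite colimits of the image of $\A \times \B$), whereas you defer that construction to the bicategorical cocompleteness of $\cat_{\fc}$.
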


\begin{proof}
We start with $\cat_{\otimes}$. Let $\A,\B$ be two small tensor categories. We denote their unit objects by $1_\A$ and $1_\B$. The bicategorical coproduct of $\A$ and $\B$ is just the product tensor category $\A \times \B$ (with componentwise structure) with the tensor functors $\iota_\A : \A \to \A \times \B$, $\iota_\A(A) \coloneqq (A,1_\B)$ and $\iota_\B : \B \to \A \times \B$, $\iota_\B(B) \coloneqq (1_\A,B)$. The tensor structure on $\iota_\A$ is given by the identity $\iota_\A(1_\A)=(1_\A,1_\B)$ and the natural isomorphisms
\[\iota_\A(A) \otimes \iota_\A(A') = (A \otimes A',1_\B \otimes 1_\B)  \myiso (A \otimes A',1_\B) = \iota_\A(A \otimes A').\]
The tensor structure on $\iota_\B$ is defined in the same way. If $\C$ is a tensor category and $F : \A \to \C$, $G : \B \to \C$ are two tensor functors, then they induce a tensor functor $H : \A \times \B \to \C$ by $H(A,B) \coloneqq F(A) \otimes G(B)$. The tensor structure on $H$ is given by
\[1_\C \myiso 1_\C \otimes 1_\C \myiso F(1_\A) \otimes G(1_\B) = H(1_\A,1_\B)\]
and the natural isomorphisms (using in particular the symmetry in $\C$)
\begin{align*}
H(A,B) \otimes H(A',B') & = \bigl(F(A) \otimes G(B)\bigr) \otimes\bigl (F(A') \otimes G(B')\bigr) \\
& \myiso \bigl(F(A) \otimes F(A')\bigr) \otimes \bigl(G(B) \otimes G(B')\bigr) \\
&\myiso F(A \otimes A') \otimes G(B \otimes B') \\
&= H(A \otimes A',B \otimes B').
\end{align*}
One checks that this describes an equivalence of categories
\[\Hom_{\otimes}(\A \times \B,\C) \simeq \Hom_{\otimes}(\A,\C) \times \Hom_{\otimes}(\B,\C).\]
This is just a categorification of the well-known construction of the coproduct of two commutative monoids.

For $\cat_{\otimes/ \IK}$ almost the same construction works. We just replace $\A \times \B$ by the tensor product $\A \otimes_{\IK} \B$ which is defined by $\Ob(\A \otimes_{\IK} \B) \coloneqq \Ob(\A) \times \Ob(\B)$ and
\[\Hom_{\A \otimes_{\IK} \B} ((A,B),(A',B')) \coloneqq \Hom_\A(A,A') \otimes_{\IK} \Hom_\B(B,B').\]
In fact, notice that for two $\IK$-linear tensor functors $F : \A \to \C$, $G : \B \to \C$ the tensor functor $H : \A \times \B \to \C$ constructed above is $\IK$-linear in each variable, thus corresponds to a $\IK$-linear tensor functor $\smash{\tilde{H} : \A \otimes_{\IK} \B \to \C}$.
 
In order to construct coproducts in $\cat_{\fc\!\otimes}$, we use the tensor product $\A \boxtimes \B$ of small finitely cocomplete categories $\A,\B$ (cf.\ \cite[Section 6.5]{Kel05}) which is defined by the universal property
\[\Hom_{\fc}(\A \boxtimes \B,\C) \simeq \{F \in \Hom(\A \times \B,\C) : F \text{ is finitely cocontinuous in each variable}\}.\]
So it comes equipped with a functor $\A \times \B \to \A \boxtimes \B$, denoted $(A,B) \mapsto A \boxtimes B$, which is finitely cocontinuous in each variable and bicategorically universal with this property; the objects in the image are called \emph{elementary tensors}. In order to construct it, we start with the product category $\P \coloneqq \A \times \B$ and consider its free cocompletion $\smash{\widehat{\P} \coloneqq \Hom(\P^{\op},\Set)}$ with the Yoneda embedding $\smash{Y : \P \hookrightarrow \widehat{\P}}$. The problem is that $Y$ does not preserve finite colimits in both variables. In order to fix thus, consider the set $\Phi$ of cocones in $\A \times \B$ which are either of the form $((A_i,B) \to (A,B))$ for finite colimit cocones $(A_i \to A)$ in $\A$ and objects $B \in \B$ or of the form $((A,B_i) \to (A,B))$ for finite colimit cocones $(B_i \to B)$ in $\B$ and objects $A \in \A$. To be precise, there is only a set of such cocones up to isomorphism, but this is of course sufficient.

Let $\smash{\Alg(\Phi) \subseteq \widehat{\P}}$ denote the full subcategory of presheaves which map all cocones in $\Phi$ to limit cones in $\Set$. This is actually a reflective subcategory by \cite[Theorem 6.11]{Kel05} (alternatively, one may use \cite[Theorem 1.39]{AR94}), in particular cocomplete. Let $\smash{R : \widehat{\P} \to \Alg(\Phi)}$ be the reflector. Now let $\A \boxtimes \B \subseteq \Alg(\Phi)$ denote the smallest full subcategory of $\Alg(\Phi)$ which is closed under finite colimits and contains the image of $\smash{R \circ Y : \P \to \widehat{\P} \to \Alg(\Phi)}$. Thus, $\A \boxtimes \B$ is finitely cocomplete. If $\C$ is any finitely cocomplete category, then by \cite[Theorem 6.23]{Kel05} the category $\Hom_{\fc}(\A \boxtimes \B,\C)$ is equivalent to the category of $\Phi$-comodels in $\C$, which by definition are functors $F : \P \to \C$ which map the cocones in $\Phi$ to colimit cocones in $\C$, i.e.\ functors $F : \A \times \B \to \C$ which are finitely cocontinuous in each variable.

Now if $\A,\B$ are finitely cocomplete tensor categories, then $\A \boxtimes \B$ becomes a finitely cocomplete tensor category with unit object $1_\A \boxtimes 1_\B$ and tensor product
\[\begin{tikzcd}[sep=6ex]
(\A \boxtimes \B) \boxtimes (\A \boxtimes \B) \ar{r}{\sim} & (\A \boxtimes \A) \boxtimes (\B \boxtimes \B) \ar{rr}{{\otimes_\A}\, \boxtimes\,\, {\otimes_\B}} && \A \boxtimes \B.
\end{tikzcd}\]
It satisfies the universal property of a coproduct in $\cat_{\fc\!\otimes}$: Given two finitely cocontinuous tensor functors $F : \A \to \C$, $G : \B \to \C$, the functor $H : \A \times \B \to \C$ defined above is finitely cocontinuous in each variable, thus extends to a finitely cocontinuous tensor functor $\smash{\tilde{H} : \A \boxtimes \B \to \C}$, which can be given a tensor structure as well, using the tensor structures on $F$ and $G$.
  
For $\cat_{\fc\!\otimes/ \IK}$ we use a corresponding tensor product $\A \boxtimes_{\IK} \B$ which classifies functors which are finitely cocontinuous and $\IK$-linear in each variable (see also \cite[Theorem 7]{LF13}).
\end{proof}

\begin{rem} \label{unify1}
It is possible to unify all four cases in \cref{coprodtensor} using a general result by Schäppi \cite[Theorem 5.2]{Sch14}, since $\cat_{\otimes}$, $\cat_{\otimes/\IK}$, $\cat_{\fc\!\otimes}$, $\cat_{\fc\!\otimes/\IK}$ are the categories of symmetric pseudomonoids in the symmetric monoidal $2$-categories $(\cat,\times)$, $(\cat_{\otimes/\IK},\otimes_K)$, $(\cat_{\fc},\boxtimes)$, $(\cat_{\fc\!/\IK},\boxtimes_{\IK})$; the special case of $\cat_{\fc\!\otimes/\IK}$ is \cite[Theorem 5.1]{Sch14}. Schäppi's general result is more advanced, though. For our four types of tensor categories this level of generality is not strictly necessary.
\end{rem}

\begin{rem} \label{smallred}
Let $\A$ be a small finitely cocomplete tensor category. Let $\C$ be any cocomplete tensor category. If $\C'$ runs through all small full subcategories of $\C$ which are closed under finite colimits and finite tensor products, then the canonical functor
\[{\varinjlim}_{\C'} \Hom_{\fc\!\otimes}(\A,\C') \to \Hom_{\fc\!\otimes}(\A,\C)\]
is clearly an equivalence of categories. From this we deduce that the bicategorical coproduct $\A \boxtimes \B$ of two small finitely cocomplete tensor categories actually satisfies
\[\Hom_{\fc\!\otimes}(\A \boxtimes \B,\C) \simeq \Hom_{\fc\!\otimes}(\A,\C) \times \Hom_{\fc\!\otimes}(\B,\C)\]
for every cocomplete tensor category $\C$, even though $\C$ does not have to be small. Incidentally this also results from the construction and the proof of the universal property of $\A \boxtimes \B$. The corresponding statements hold in the $\IK$-linear case.
\end{rem}

\begin{prop} \label{coprodlfp}
If $\A,\B$ are two locally finitely presentable tensor categories, then they have a bicategorical coproduct $\smash{\A \coboxtimes \B}$ inside the $2$-category of all cocomplete tensor categories and cocontinuous tensor functors. Moreover, $\smash{\A \coboxtimes \B}$ is a locally finitely presentable tensor category, and the inclusion functors $\smash{\A \rightarrow \A \coboxtimes \B \leftarrow \B}$ preserve finitely presentable objects. The corresponding statements hold in the $\IK$-linear case.
\end{prop}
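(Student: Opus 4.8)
The plan is to reduce everything to the bicategorical coproduct $\A_{\fp} \boxtimes \B_{\fp}$ in $\cat_{\fc\!\otimes}$ already constructed in \cref{coprodtensor}, by passing back and forth between locally finitely presentable tensor categories and their finitely presentable parts via the $\Ind$-construction. Recall first that for a locally finitely presentable tensor category $\A$ the full subcategory $\A_{\fp}$ of finitely presentable objects is a small finitely cocomplete tensor category (it is closed under finite colimits and, by the definition of a locally finitely presentable tensor category, under finite tensor products, and it contains the unit), and that the canonical functor $\Ind(\A_{\fp}) \to \A$ is an equivalence. Conversely, for any small finitely cocomplete tensor category $\D$ the category $\Ind(\D)$ is again a locally finitely presentable tensor category: its tensor product is the unique extension of $\otimes_{\D}$ along $\D \hookrightarrow \Ind(\D)$ that is cocontinuous in each variable (concretely $\colim_i A_i \otimes \colim_j B_j \coloneqq \colim_{i,j} A_i \otimes_{\D} B_j$ on filtered colimits of objects of $\D$), and since $\D$ — hence its idempotent completion $\Ind(\D)_{\fp}$ — is closed under finite tensor products, so are the finitely presentable objects of $\Ind(\D)$. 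The essential input I would isolate is the monoidal universal property of $\Ind$: for every cocomplete tensor category $\C$, restriction along $\D \hookrightarrow \Ind(\D)$ gives an equivalence
\[\Hom_{\c\otimes}(\Ind(\D),\C) \simeq \Hom_{\fc\!\otimes}(\D,\C).\]

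Granting this, I would define $\A \coboxtimes \B \coloneqq \Ind(\A_{\fp} \boxtimes \B_{\fp})$, which by the previous paragraph is a locally finitely presentable tensor category. The two inclusions are obtained by applying $\Ind$ to the elementary-tensor inclusions $\A_{\fp} \to \A_{\fp} \boxtimes \B_{\fp}$, $A \mapsto A \boxtimes 1_{\B_{\fp}}$, and symmetrically for $\B$; these are finitely cocontinuous tensor functors landing in $\A_{\fp} \boxtimes \B_{\fp}$, so after applying $\Ind$ they carry finitely presentable objects of $\A$ (resp.\ $\B$) into the finitely presentable objects of $\A \coboxtimes \B$, giving the claimed preservation of finite presentability. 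The universal property is then read off from the chain of equivalences
\begin{align*}
\Hom_{\c\otimes}(\A \coboxtimes \B,\C)
&\simeq \Hom_{\fc\!\otimes}(\A_{\fp} \boxtimes \B_{\fp},\C) \\
&\simeq \Hom_{\fc\!\otimes}(\A_{\fp},\C) \times \Hom_{\fc\!\otimes}(\B_{\fp},\C) \\
&\simeq \Hom_{\c\otimes}(\A,\C) \times \Hom_{\c\otimes}(\B,\C),
\end{align*}
where the first and last steps are the monoidal universal property of $\Ind$ (using $\A \simeq \Ind(\A_{\fp})$ and $\B \simeq \Ind(\B_{\fp})$), and the middle step is exactly the coproduct property of $\A_{\fp} \boxtimes \B_{\fp}$ in the form proved for arbitrary cocomplete targets in \cref{smallred}. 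Tracing these equivalences identifies the structure maps with the two inclusions above, so $\A \coboxtimes \B$ is the desired bicategorical coproduct in the $2$-category of cocomplete tensor categories.

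The $\IK$-linear variant is handled identically, replacing $\boxtimes$ by $\boxtimes_{\IK}$ and $\Hom_{\fc\!\otimes}$, $\Hom_{\c\otimes}$ by their $\IK$-linear counterparts throughout. The hard part will be establishing the monoidal universal property of $\Ind$ cleanly: the underlying equivalence $\Hom_{\c}(\Ind(\D),\C) \simeq \Hom_{\fc}(\D,\C)$ is standard, but one must check that the extended tensor product on $\Ind(\D)$ is genuinely cocontinuous (not merely finitary) in each variable, that it is symmetric monoidal with coherence inherited from $\D$, and — most delicately — that a cocontinuous tensor functor out of $\Ind(\D)$ is determined, together with all of its monoidal coherence data, by its finitely cocontinuous tensor restriction to $\D$. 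This last point is the genuine categorification: the structure isomorphisms must be shown to extend uniquely by continuity and to remain coherent, which is where the bookkeeping of coherence data replaces the one-line verification available for commutative rings.
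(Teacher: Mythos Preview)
Your proposal is correct and follows essentially the same route as the paper: define $\A \coboxtimes \B \coloneqq \Ind(\A_{\fp} \boxtimes \B_{\fp})$, obtain the inclusions by applying $\Ind$ to the coproduct inclusions in $\cat_{\fc\!\otimes}$, and verify the universal property via the same chain of equivalences using \cref{smallred}. The only difference is emphasis: you isolate the monoidal universal property of $\Ind$ as a separate ingredient and flag the coherence bookkeeping as the delicate point, whereas the paper simply uses that equivalence without further comment.
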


\begin{proof}
The category $\A_{\fp}$ of finitely presentable objects in $\A$ is a small finitely cocomplete tensor category, and the same holds for $\B_{\fp}$. From \cref{coprodtensor} we know that there is a bicategorical coproduct $\A_{\fp} \boxtimes \B_{\fp}$ in $\cat_{\fc\!\otimes}$. We define
\[\A \coboxtimes \B \coloneqq \Ind(\A_{\fp} \boxtimes \B_{\fp}),\]
see \cite[Chapter 6]{KS06} for Indization. This is a locally finitely presentable tensor category with $\smash{(\A \coboxtimes \B)_{\fp} = \A_{\fp} \boxtimes \B_{\fp}}$. The finitely cocontinuous tensor functor $\A_{\fp} \to \A_{\fp} \boxtimes \B_{\fp}$ extends to a cocontinuous tensor functor $\smash{\A = \Ind(\A_{\fp}) \to \Ind(\A_{\fp} \boxtimes \B_{\fp}) = \A \coboxtimes \B}$. It preserves finitely presentable objects by construction. The same holds for $\smash{\B \to \A \coboxtimes \B}$. If $\C$ is any cocomplete tensor category, then
\begin{align*}
\smash{\Hom_{\c\otimes}(\A \coboxtimes \B,\C)} & = \Hom_{\c\otimes}\bigl(\Ind(\A_{\fp} \boxtimes \B_{\fp}),\C\bigr) \\
& \simeq \Hom_{\fc\!\otimes}(\A_{\fp} \boxtimes \B_{\fp},\C) \\
& \simeq \Hom_{\fc\!\otimes}(\A_{\fp},\C) \times \Hom_{\fc\!\otimes}(\B_{\fp},\C) \\
& \simeq \Hom_{\c\otimes}(\A,\C) \times \Hom_{\c\otimes}(\B,\C),
\end{align*}
where we have used \cref{smallred} in the second equivalence.
\end{proof}

So far we have only constructed binary bicategorical coproducts. Let us generalize this:
 
\begin{prop} \label{allcoprodtensor}
The $2$-categories $\cat_{\otimes}$, $\cat_{\otimes/\IK}$, $\cat_{\fc\!\otimes}$, $\cat_{\fc\!\otimes/\IK}$ have arbitrary bicategorical coproducts.
\end{prop}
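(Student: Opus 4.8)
The plan is to reduce arbitrary bicategorical coproducts to the binary ones already produced in \cref{coprodtensor}, together with bicategorical \emph{directed} colimits, following the principle recalled in \cref{sec:prelim} that an arbitrary coproduct is the directed colimit of its finite subcoproducts. Fix a set $I$ and objects $(\A_i)_{i\in I}$ in one of the four $2$-categories. First I would note that iterating the binary coproduct of \cref{coprodtensor} (which is associative and commutative up to coherent equivalence, since any bracketing corepresents the same pseudofunctor $T\mapsto\prod_{i\in J}\Hom(\A_i,T)$) yields a bicategorical coproduct $\coprod_{i\in J}\A_i$ for every nonempty finite $J\subseteq I$. The case $J=\varnothing$ is the bicategorical initial object, i.e.\ the empty coproduct, which I treat separately: it is the free structured tensor category on no generators. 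Concretely this is the terminal category $\mathbf 1$ for $\cat_{\otimes}$, the one-object $\IK$-linear category with endomorphism ring $\IK$ for $\cat_{\otimes/\IK}$, the category $\FinSet$ for $\cat_{\fc\!\otimes}$, and the category $\Mod_{\fp}(\IK)$ for $\cat_{\fc\!\otimes/\IK}$; in each case a direct inspection shows that a (structured) tensor functor out of it into any $\C$ is determined up to unique isomorphism by sending the generating object to $1_\C$, so that $\Hom(\,\cdot\,,\C)$ is equivalent to the terminal category, which is exactly the universal property of the empty coproduct.

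Next I would establish that each of the four $2$-categories admits bicategorical directed colimits, constructed by the usual ``naive'' recipe. Given a directed system with transition functors $f_{i,j}$ and the coherence isomorphisms of \cref{sec:prelim}, form the pseudo-filtered colimit of the underlying categories: its objects are pairs $(i,A)$ with $A\in\A_i$, and a morphism $(i,A)\to(j,B)$ is represented by a morphism $f_{i,k}(A)\to f_{j,k}(B)$ in some $\A_k$ with $k\geq i,j$, two representatives being identified when they agree after passing to a larger index. Because filtered colimits commute with finite limits and finite colimits, this category is again small, is finitely cocomplete in the $\fc$ cases, and inherits a componentwise tensor product and $\IK$-linear structure; the coprojections are structure-preserving functors. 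Its bicategorical universal property
\[\Hom\bigl({\varinjlim}_{j}\A_j,\C\bigr)\;\myiso\;{\varprojlim}_{j}\,\Hom(\A_j,\C)\]
is then read off directly from the explicit description of objects and morphisms.

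I would then assemble the two ingredients. Assuming $I\neq\varnothing$ (otherwise the coproduct is the initial object above), the nonempty finite subsets of $I$ form a directed poset, and $J\mapsto\coprod_{i\in J}\A_i$ is a directed system whose transition functors insert unit objects in the new coordinates, with coherence isomorphisms supplied by the coproduct universal property. Defining $\coprod_{i\in I}\A_i\coloneqq\varinjlim_{J}\coprod_{i\in J}\A_i$ and chaining the universal properties gives, for every target $\C$,
\begin{align*}
\Hom\Bigl(\coprod_{i\in I}\A_i,\C\Bigr)
&\myiso {\varprojlim}_{J}\,\Hom\Bigl(\coprod_{i\in J}\A_i,\C\Bigr)\\
&\myiso {\varprojlim}_{J}\,\prod_{i\in J}\Hom(\A_i,\C)\\
&\myiso \prod_{i\in I}\Hom(\A_i,\C),
\end{align*}
where the last step is the elementary fact that a cofiltered bicategorical limit of the finite partial products recovers the full product. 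In the non-$\fc$ cases this directed colimit admits the concrete description as the subcategory of $\prod_{i\in I}\A_i$ whose objects are the families equal to the unit object in all but finitely many coordinates and whose morphisms are those equal to the identity in all but finitely many coordinates; this recovers the expected categorification of the coproduct of commutative monoids, with the induced functor acting by $(A_i)_i\mapsto\bigotimes_i F_i(A_i)$ (a finite tensor product over the support).

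The main obstacle I anticipate is the genuinely $2$-categorical bookkeeping rather than any single hard idea. Verifying that the pseudo-filtered colimit really inherits finite cocompleteness in the $\fc$ cases, and that the coprojections and the induced functors are finitely cocontinuous tensor functors, requires some care; and the three displayed equivalences must be upgraded to a single equivalence of categories that is pseudo-natural in $\C$, which means tracking the coherence isomorphisms of the directed system and of the bicategorical (co)limits throughout. None of this is deep, but it is precisely where the categorified argument diverges from the one-dimensional construction for commutative monoids, so I would organize the proof to isolate and discharge these coherence checks explicitly.
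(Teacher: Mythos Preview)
Your proposal is correct and follows the same overall strategy as the paper: exhibit bicategorical initial objects, build finite coproducts from the binary case of \cref{coprodtensor}, and then realize arbitrary coproducts as bicategorical directed colimits of the finite ones.

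The one place where you diverge from the paper is in the $\fc$ cases. You assert that the naive pseudo-filtered colimit $\A$ of the $\A_i$ in $\cat$ is already finitely cocomplete and has the correct universal property in $\cat_{\fc}$. This is true: any finite diagram in $\A$ lifts to some stage $\A_k$, its colimit there is preserved by the (finitely cocontinuous) transition functors, and the resulting cocone is a colimit in $\A$ because $\Hom_\A((k,c),(j,B))=\varinjlim_l\Hom_{\A_l}(f_{k,l}(c),f_{j,l}(B))$ and filtered colimits commute with finite \emph{limits} in $\Set$. The paper instead applies the reflection trick from the proof of \cref{coprodtensor}, passing to a reflective $\Alg(\Phi)\subseteq\widehat{\A}$ and closing the image under finite colimits. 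Your route is more elementary and avoids that machinery; the paper's route is uniform with its other constructions but is not actually forced here. You should, however, sharpen your stated justification ``filtered colimits commute with finite limits and finite colimits'': it is precisely the commutation with finite limits in $\Set$, applied to the Hom-set description, that does the work.
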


\begin{proof}
The $2$-categories have bicategorical initial objects, namely (in this order) $\{1\}$ with $\End(1)=\{\id_1\}$, $\{1\}$ with $\End(1)=\IK$, $\FinSet$ with $\otimes=\times$, $\Mod_{\fp}(\IK)$ with $\otimes=\otimes_{\IK}$. We have seen in \cref{coprodtensor} that binary bicategorical coproducts exist. Thus, finite bicategorical coproducts exist. So what is left to prove is that bicategorical directed colimits exist. We only sketch the proof.

Let $(I,\leq)$ be a directed preorder. We fix some index $i_0 \in I$ and a function $u : I \times I \to I$ with $i \leq u(i,j)$ and $j \leq u(i,j)$. Let $(\A_i)_{i \in I}$ be a family of small categories equipped with functors $F_{i,j} : \A_i \to \A_j$ for $i \leq j$ and isomorphisms $\id_{\A_i} \myiso F_{i,i}$, $F_{j,k} \circ F_{i,j} \myiso F_{i,k}$ which satisfy the three obvious coherence conditions. Their bicategorical colimit $\A$ in $\cat$ has as objects the $(i,A)$ with $i \in I$ and $A \in \A_i$, and the morphisms are
\[\Hom((i,A),(j,B)) \coloneqq {\varinjlim}_{k \geq i,j} \Hom\bigl(F_{i,k}(A),F_{j,k}(B)\bigr).\]
If we start with a diagram in $\cat_{\otimes}$ as above, then $\A$ becomes a tensor category: The unit object is $({i_0},1_{\A_{i_0}})$, and the tensor product is defined on objects by
\[(i,A) \otimes (j,B) \coloneqq \bigl(u(i,j),F_{i,u(i,j)}(A) \otimes F_{j,u(i,j)}(B)\bigr).\]
In fact, then $\A$ is a bicategorical directed colimit in $\cat_{\otimes}$. The conceptual reason for this is that $\times : \cat^2 \to \cat$ preserves bicategorical directed colimits in each variable. For $\cat_{\fc}$ we have to do the same reflection trick as in the proof of \cref{coprodtensor} to find a finitely cocomplete category $\smash{\B \subseteq \widehat{\A}}$ with finitely cocontinuous functors $\A_n \to \B$. Again the bicategorical directed colimit in $\cat_{\fc}$ can be used for $\cat_{\fc\!/\otimes}$ as well. The construction for the $\IK$-linear variants is similar.
\end{proof}

\begin{rem} \label{coprodcat}
One can also show that $\cat$, $\cat_{\IK}$, $\cat_{\fc}$, $\cat_{\fc\!/\IK}$ have arbitrary bicategorical coproducts. Since we do not need this for tensor categories in the following, we only sketch the construction. For $\cat$ the usual disjoint union is even a $2$-categorical coproduct. For $\cat_{\IK}$ we tweak this disjoint union by defining $\Hom(A,B) \coloneqq \{0\}$ for objects $A,B$ of different categories. For $\cat_{\fc}$ a categorified version of ``restricted products'' works, namely the subcategory of the product where almost all objects are initial. For $\cat_{\fc\!/\IK}$ we do the reflection trick again.
\end{rem}
 
\section{Bicategorical coinserters and coequifiers}

In this section we prove that the $2$-categories of tensor categories mentioned before have bicategorical coinserters and coequifiers. Combining this with the previous sections, this implies the existence of all bicategorical colimits. We need to look at categories without tensor structure first.
 
\begin{prop} \label{coins}
The $2$-categories $\cat$, $\cat_{\IK}$, $\cat_{\fc}$, $\cat_{\fc\!/\IK}$ have bicategorial coinserters.
\end{prop}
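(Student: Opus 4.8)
The plan is to construct the bicategorical coinserter of a parallel pair $F, G : \A \rightrightarrows \B$ explicitly, starting with the case $\C = \cat$ and then adapting to the other three cases. The universal property asks us to produce $P : \B \to \C$ together with a natural transformation $\alpha : P \circ F \to P \circ G$ that is bicategorically universal; concretely, for every $T$, precomposition should induce an equivalence between $\Hom(\C,T)$ and the category of pairs $(Q, \beta)$ where $Q : \B \to T$ and $\beta : Q \circ F \to Q \circ G$ is merely a natural transformation. The natural candidate is a quotient-type construction: take $\C$ to have the same objects as $\B$, and freely adjoin, for each object $A \in \A$, a new morphism $\alpha_A : F(A) \to G(A)$ subject to the naturality squares forced by the $\alpha$ being a transformation. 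In other words, I would build $\C$ as the category obtained from $\B$ by formally adding arrows $\alpha_A$ and then imposing the naturality relations $\alpha_{A'} \circ F(a) = G(a) \circ \alpha_A$ for every $a : A \to A'$ in $\A$, together with all composites.

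Concretely, first I would describe morphisms in $\C$ as equivalence classes of finite zig-zag-free composable strings alternating between morphisms of $\B$ and the formal arrows $\alpha_A$ (and their $\B$-translates), modulo the relations that make $\alpha$ natural and that respect composition in $\B$. A cleaner way to package this is to present $\C$ by a generators-and-relations (computad/graph) description: objects are those of $\B$, generating morphisms are the morphisms of $\B$ plus one arrow $\alpha_A$ for each $A \in \Ob(\A)$, and relations are the composition law of $\B$ plus the naturality equations. Then $P : \B \to \C$ is the evident functor on objects and generators, and $\alpha$ is the transformation with components $\alpha_A$. Verifying the universal property amounts to the standard fact that a functor out of a category presented by generators and relations corresponds exactly to a choice of images of generators satisfying the relations — here, a functor $Q : \B \to T$ together with a family of arrows $\beta_A : QF(A) \to QG(A)$ natural in $A$, i.e.\ precisely a transformation $\beta : Q \circ F \to Q \circ G$. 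This gives an equivalence (in fact an isomorphism) of the relevant Hom-categories, and one checks it is natural in $T$.

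For the variants I would proceed as in the earlier proofs. For $\cat_{\IK}$ the same presentation works but taken in the $\IK$-linear setting: one forms the free $\IK$-linear category on the generating graph and imposes the naturality relations as $\IK$-linear identities, so that morphisms are $\IK$-linear combinations of composable strings; the universal property then matches $\IK$-linear $Q$ with $\IK$-linear natural $\beta$. For $\cat_{\fc}$ and $\cat_{\fc\!/\IK}$ I would apply the reflection trick already used in the proof of \cref{coprodtensor} and in \cref{allcoprodtensor}: embed the presented category $\C_0$ into its free (finite-colimit or $\IK$-linear finite-colimit) cocompletion via a presheaf category, reflect onto the subcategory of presheaves sending the relevant cocones to limit cones, and take the smallest full subcategory closed under finite colimits containing the image. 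The desired finitely cocomplete coinserter $\C$ is this reflected/closed subcategory, and its universal property follows by combining the presentation argument with the universal property of the finite-colimit cocompletion (as in \cite[Theorem 6.23]{Kel05}).

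The main obstacle I anticipate is bookkeeping rather than conceptual difficulty: making the generators-and-relations description of $\C$ fully rigorous — in particular showing that the imposed relations are exactly those needed so that a functor out of $\C$ is the same as a natural transformation $\beta$, with no spurious extra identifications and no missing ones — and then checking that the induced comparison functor on Hom-categories is an equivalence \emph{naturally} in $T$, which is what upgrades the construction to a genuine bicategorical universal property. In the finitely cocomplete cases the extra care is ensuring that passing to the reflective subcategory and closing under finite colimits does not disturb the matching of $\beta$'s, i.e.\ that finitely cocontinuous functors out of $\C$ correspond to the same data; this is handled by the cocompletion universal property but must be stated precisely.
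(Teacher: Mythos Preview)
Your proposal is correct and follows essentially the same approach as the paper: the paper also presents the coinserter in $\cat$ by adjoining formal arrows $\alpha_A : F(A) \to G(A)$ to the underlying graph of $\B$, passing to the path category, and quotienting by the relations encoding composition in $\B$ and naturality of $\alpha$; it then handles $\cat_{\IK}$ by the free $\IK$-linear version of this, and $\cat_{\fc}$ and $\cat_{\fc/\IK}$ via exactly the reflection trick you describe (with the relevant cocones being the $P$-images of finite colimit cocones in $\B$, and the universal property supplied by \cite[Theorem 6.23]{Kel05}). The only cosmetic difference is that the paper observes the construction in $\cat$ yields even the strict $2$-categorical coinserter, which a fortiori gives the bicategorical one.
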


\begin{proof}
We start with $\cat$. Let $F,G : \A \rightrightarrows \B$ be two functors between small categories. We may assume that $\B$ is small in the usual sense, i.e.\ that $\Ob(\B)$ is a set. To the underlying digraph of $\B$ we add edges of the form $\alpha_A : F(A) \to G(A)$ for each $A \in \A$.  The edge associated to a morphism $f : B \to B'$ in $\B$ is denoted by $[f]$. On the path category generated by this digraph we consider the congruence relation $R$ generated by the following two sets of relations: (1) the relations in $\B$, by which we mean that $[\id_B]$ is equivalent to $\id_B$ for $B \in \B$ and that $[g \circ f]$ is equivalent to $[g] \circ [f]$ whenever $f,g$ are two morphisms in $\B$ for which $g \circ f$ is defined, (2) for every morphism $f : A \to A'$ in $\A$ the composition $[G(f)] \circ \alpha_A$ is equivalent to $\alpha_{A'} \circ [F(f)]$. The quotient category $\C \coloneqq \B/R$ is a small category with a functor $P : \B \to \C$ and a morphism $\alpha : P \circ F \to P \circ G$. This is clearly the $2$-categorical and hence bicategorical coinserter of $F,G$.

For $\cat_{\IK}$ a similar construction works: We just consider the free $\IK$-linear category on the path category from above and then take the quotient with respect to the relations from above together with the relations which ensure that $P$ becomes $\IK$-linear.

Now consider two maps $F,G : \A \rightrightarrows \B$ in $\cat_{\fc}$. We apply the forgetful $2$-functor $\cat_{\fc} \to \cat$ and construct a bicategorical coinserter $(P : \B \to \C,\, \alpha : P \circ F \to P \circ G)$ in $\cat$ as above. We proceed as in the proof of \cref{coprodtensor} and consider the free cocompletion $\smash{\widehat{\C}}$ with the Yoneda embedding $\smash{Y : \C \hookrightarrow \widehat{\C}}$. The problem is that the functor $\smash{Y \circ P : \B \to \widehat{\C}}$ has no reason to preserve finite colimits. To fix this, let $\Phi_\B$ denote the set of finite colimit cocones in $\B$. Then $\Phi = P(\Phi_\B)$ is a set of cocones in $\C$. Let $\smash{\Alg(\Phi) \subseteq \widehat{\C}}$ denote the full subcategory of presheaves which map all these cocones to limit cones in $\Set$. This is actually a reflective subcategory by \cite[Theorem 6.11]{Kel05} (alternatively, one may use \cite[Theorem 1.39]{AR94}), in particular cocomplete. Let $\smash{R : \widehat{\C} \to \Alg(\Phi)}$ be the reflector. Now let $\D \subseteq \Alg(\Phi)$ denote the smallest full subcategory of $\Alg(\Phi)$ which is closed under finite colimits and contains the image of $R \circ Y : \C \to \Alg(\Phi)$. Thus, $\D$ is finitely cocomplete. If $\E$ is any finitely cocomplete category, then by \cite[Theorem 6.23]{Kel05} the category $\Hom_{\fc}(\D,\E)$ is equivalent to the category of $\Phi$-comodels in $\E$, which by definition are functors $H : \C \to \E$ which map the cocones in $\Phi$ to colimit cocones in $\E$. By definition of $\C$ and $\Phi$, we arrive at the category of finitely cocontinuous functors $L : \B \to \E$ equipped with a morphism of (finitely cocontinuous) functors $L \circ F \to L \circ G$. This means that $\D$ is the desired bicategorical coinserter in $\cat_{\fc}$.
    
The construction in $\cat_{\fc\!/ \IK}$ works almost the same. We start with the bicategorical coinserter in $\cat_{\IK}$, consider its free $\IK$-linear cocompletion and proceed as above. In fact, the theory of \cite[Chapter 6]{Kel05} works for quite general enriched categories.
\end{proof}

\begin{prop} \label{coequifiers}
The $2$-categories $\cat$, $\cat_{\IK}$, $\cat_{\fc}$, $\cat_{\fc\!/\IK}$ have bicategorical coequifiers.
\end{prop}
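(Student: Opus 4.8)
The plan is to follow the same pattern as \cref{coins}: first construct the coequifier explicitly in $\cat$ and $\cat_{\IK}$ by a quotient, and then bootstrap to the finitely cocomplete cases by the free-cocompletion-and-reflection trick.

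First I would treat $\cat$. Given functors $F,G : \A \rightrightarrows \B$ and natural transformations $\alpha, \beta : F \Rightarrow G$, each object $A \in \A$ yields a pair of parallel morphisms $\alpha_A, \beta_A : F(A) \to G(A)$ in $\B$. I would let $R$ be the smallest congruence on $\B$ (i.e.\ a family of equivalence relations on the hom-sets, compatible with composition) such that $\alpha_A \mathrel{R} \beta_A$ for all $A \in \A$, and set $\C \coloneqq \B/R$ with the quotient functor $P : \B \to \C$. Since $P$ is the identity on objects and full, a functor $Q : \B \to T$ factors uniquely through $P$ if and only if $Q(\alpha_A) = Q(\beta_A)$ for all $A$, i.e.\ $Q \circ \alpha = Q \circ \beta$; and a natural transformation between two functors $\C \to T$ is the same datum as a natural transformation between the corresponding coequifying functors $\B \to T$ (naturality with respect to $\B$ forces naturality with respect to $\C$ because $P$ is full and bijective on objects). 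This gives an isomorphism of categories $\Hom(\C,T) \cong \{Q : \B \to T \mid Q \circ \alpha = Q \circ \beta\}$, so $P$ is in fact the $2$-categorical, hence bicategorical, coequifier. For $\cat_{\IK}$ the same works $\IK$-linearly: I would replace $R$ by the two-sided ideal of morphisms generated by the differences $\alpha_A - \beta_A$ and form the $\IK$-linear quotient $\C = \B/R$, whose hom-modules are $\Hom_\B(X,Y)$ modulo the relevant submodule of that ideal.

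Next I would handle $\cat_{\fc}$, reusing the machinery of \cref{coins}. Starting from finitely cocontinuous $F,G$ and natural transformations $\alpha,\beta$, I would first form the $\cat$-coequifier $P_0 : \B \to \C_0 = \B/R$ as above, and then apply the reflection trick: pass to the free cocompletion $\widehat{\C_0}$ with Yoneda embedding $Y : \C_0 \hookrightarrow \widehat{\C_0}$, set $\Phi \coloneqq P_0(\Phi_\B)$ where $\Phi_\B$ is the set of finite colimit cocones in $\B$, let $\Alg(\Phi) \subseteq \widehat{\C_0}$ be the reflective (hence cocomplete) subcategory of presheaves sending the cocones in $\Phi$ to limit cones, and let $\D \subseteq \Alg(\Phi)$ be the smallest full subcategory closed under finite colimits and containing the image of $R \circ Y$. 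Then $\D$ is finitely cocomplete, and by \cite[Theorem 6.23]{Kel05} the category $\Hom_{\fc}(\D,\E)$ is equivalent to the $\Phi$-comodels in $\E$, i.e.\ functors $\C_0 \to \E$ sending the cocones of $\Phi$ to colimit cocones. Unwinding the definitions of $\C_0$ and $\Phi$, such functors are exactly the finitely cocontinuous functors $L : \B \to \E$ with $L \circ \alpha = L \circ \beta$, so $\D$ is the desired coequifier in $\cat_{\fc}$. The case $\cat_{\fc\!/\IK}$ is identical, using the $\IK$-linear coequifier of the previous paragraph together with the $\IK$-linear free cocompletion.

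The only genuinely delicate point is the finitely cocomplete case, where one must verify that the reflection construction produces a category with the correct bicategorical universal property; but this step is formally identical to the corresponding one in \cref{coins}, the sole change being that the generating cocones $\Phi$ now live over the coequifier quotient $\C_0$ rather than over the coinserter quotient. The $\cat$ and $\cat_{\IK}$ cases are purely algebraic and present no obstacle beyond the bookkeeping with congruences and ideals.
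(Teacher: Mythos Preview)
Your proposal is correct and follows essentially the same approach as the paper: the quotient by the smallest congruence generated by the pairs $(\alpha_A,\beta_A)$ in $\cat$ (respectively the $\IK$-linear ideal generated by $\alpha_A-\beta_A$ in $\cat_{\IK}$), and then the free-cocompletion-plus-reflection trick for $\cat_{\fc}$ and $\cat_{\fc\!/\IK}$. One small clean-up: in your $\cat_{\fc}$ paragraph the symbol $R$ is used both for the congruence defining $\C_0=\B/R$ and for the reflector $\widehat{\C_0}\to\Alg(\Phi)$, so you should rename one of them.
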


\begin{proof}
We start with $\cat$. Let $F,G : \A \rightrightarrows \B$ be two functors between small categories and let $\alpha,\beta : F \rightrightarrows G$ be two morphisms of functors. Let $R$ be the smallest congruence relation on $\B$ which contains all pairs $(\alpha(A),\beta(A))$ of morphisms $F(A) \to G(A)$ for $A \in \Ob(\A)$. Then the quotient category $\B/R$ with the projection functor $P : \B \to \B/R$ is a $2$-categorical and hence bicategorical coequifier of $\alpha,\beta$.

For $\cat_{\IK}$ a similar construction works; here we consider of course only congruence relations which are a compatible with the $\IK$-linear structure.

For $\cat_{\fc}$ we use the same method as in the proof of \cref{coprodtensor}. With the notation above, assume that $\A,\B$ are finitely cocomplete and that $F,G$ are finitely cocontinuous (there are no additionial assumptions on $\alpha,\beta$). We construct the coequifier $P : \B \to \C$ in $\cat$ as above, define $\Phi$ as the image of all finite colimit cocones in $\B$ under $P$, define $\smash{\Alg(\Phi) \subseteq \widehat{\C}}$ as the (reflective and hence cocomplete) category of presheaves which send the cocones in $\Phi$ to limit cones, and finally define $\D$ as the closure of the image of $\smash{\C \hookrightarrow \widehat{\C} \twoheadrightarrow \Alg(\Phi)}$ under finite colimits. Then the composition $\B \to \C \to \D$ is a bicategorical coequifier of $\alpha,\beta$ in $\cat_{\fc}$.
 
For $\cat_{\fc\!/\IK}$ a similar argument works.
\end{proof}

\begin{cor} \label{catcocomplete}
The $2$-categories $\cat$, $\cat_{\IK}$, $\cat_{\fc}$, $\cat_{\fc\!/\IK}$ are bicategorically cocomplete.
\end{cor}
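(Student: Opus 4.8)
The plan is to invoke the abstract reduction \cref{allcolim}, which asserts that any bicategory possessing bicategorical coproducts, bicategorical coinserters and bicategorical coequifiers is automatically bicategorically cocomplete. Thus it suffices to verify that each of the four $2$-categories $\cat$, $\cat_{\IK}$, $\cat_{\fc}$, $\cat_{\fc\!/\IK}$ admits these three kinds of colimits, and all of this has already been established separately in the present and the preceding section. The corollary is therefore a purely formal assembly of results proved earlier.

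Concretely, I would proceed as follows. First, bicategorical coinserters exist in all four $2$-categories by \cref{coins}. Second, bicategorical coequifiers exist in all four by \cref{coequifiers}. Third, arbitrary bicategorical coproducts exist by \cref{coprodcat}. Feeding these three facts into \cref{allcolim} immediately yields the claimed bicategorical cocompleteness of each of $\cat$, $\cat_{\IK}$, $\cat_{\fc}$, $\cat_{\fc\!/\IK}$.

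There is essentially no obstacle remaining, since all the substantive work has been carried out in the constituent results. The only point demanding a little care is that \cref{allcolim} requires \emph{arbitrary} (not merely finite) bicategorical coproducts, so one must genuinely appeal to the constructions sketched in \cref{coprodcat} rather than to any purely binary version. Since that remark supplies exactly arbitrary bicategorical coproducts in each of the four $2$-categories, the hypotheses of \cref{allcolim} are fully met and the argument concludes at once.
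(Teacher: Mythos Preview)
Your proof is correct and matches the paper's own argument exactly: the paper also derives the corollary by combining \cref{coprodcat}, \cref{coins}, and \cref{coequifiers} with \cref{allcolim}. Your additional remark about needing arbitrary (not merely binary) coproducts is a nice clarification, but otherwise there is nothing to add.
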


\begin{proof}
This follows from \cref{coprodcat}, \cref{coins} and \cref{coequifiers} using \cref{allcolim}.
\end{proof}

Next we will treat tensor categories. For now we will focus on the simplest case $\cat_{\otimes}$ and say later what needs to be changed for the other types of tensor categories (in fact, not much).
 
\begin{rem} \label{preservecolim}
The product functor $\times : \cat^2 \to \cat$ preserves bicategorical coinserters and bicategorical coequifiers in each variable, in fact all bicategorical colimits. This is because $\A \times -$ (and likewise $- \times \A$) is $2$-categorically (and hence bicategorically) left adjoint to $\HOM(\A,-)$. Applying this twice, it follows that if $\B \to \C$ is the bicategorical coinserter of $\A \rightrightarrows \B$ and $\B' \to \C'$ is the bicategorical coinserter of $\A' \rightrightarrows \B'$, then $\B \times \B' \to \C \times \C'$ is the multiple bicategorical coinserter of $\A \times \B' \rightrightarrows \B \times \B'$ and $\B \times \A' \rightrightarrows \B \times \B'$. A similar statement holds for bicategorical coequifiers.
\end{rem}

The following construction of bicategorical coequalizers (or more generally, bicategorical coinserters) of tensor categories is motivated by the construction of coequalizers of commutative monoids. If $f,g : A \rightrightarrows B$ are two homomorphisms of commutative monoids, we define auxiliary maps $\overline{f},\overline{g} : B \times A \rightrightarrows B$ by $\overline{f}(b,a) \coloneqq b \cdot f(a)$ and $\overline{g}(b,a) \coloneqq b \cdot g(a)$. If $p : B \to C$ is the coequalizer of $\overline{f},\overline{g}$ in the category of sets, then $C$ has a unique commutative monoid structure which makes $p$ a homomorphism. Moreover, $p$ is the coequalizer of $f,g$ in the category of commutative monoids. Similarly, coequalizers of commutative rings can be constructed using coequalizers of abelian groups. For tensor categories, which we can view as categorified commutative monoids, we can start with a similar construction. However, equalities have to be replaced by (iso-)morphisms, and their coherence conditions need some extra care. We will also need a second step which is not visible in the case of commutative monoids.

\begin{prop} \label{coinsish}
Let $\A$ be a small tensor category, $\I$ be a small category and $F,G : \I \rightrightarrows \A$ be two functors into (the underlying category of) $\A$. Then there is a small tensor category $\B$ with a tensor functor $P : \A \to \B$ and a morphism of functors $\delta' : P \circ F \to P \circ G$ (without tensor structure) which is universal: For every small tensor category $\C$ we get an equivalence of categories between the category of tensor functors $\B \to \C$ and the category of tensor functors $H : \A \to \C$ equipped with a morphism of functors $H \circ F \to H \circ G$.
\end{prop}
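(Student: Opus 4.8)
The plan is to categorify the monoid construction sketched above. Following the analogy $\overline{f}(b,a) = b \cdot f(a)$, I would first define two auxiliary functors $\overline{F}, \overline{G} : \A \times \I \rightrightarrows \A$ by
\[\overline{F}(A,i) \coloneqq A \otimes F(i), \qquad \overline{G}(A,i) \coloneqq A \otimes G(i),\]
with the evident action on morphisms. By \cref{coins} the $2$-category $\cat$ admits a bicategorical coinserter of this parallel pair; write $(P : \A \to \B,\, \overline{\delta} : P \circ \overline{F} \to P \circ \overline{G})$ for it. This $\B$ will be the desired tensor category, and the morphism $\delta'$ of the statement will be extracted from $\overline{\delta}$ by restricting to $A = 1_\A$; concretely $\delta'_i$ is $\overline{\delta}_{(1_\A,i)}$ conjugated by the unit isomorphisms $F(i) \myiso 1_\A \otimes F(i)$ and $G(i) \myiso 1_\A \otimes G(i)$.

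The next step is to transport the tensor structure of $\A$ to $\B$. Here I would use \cref{preservecolim}: applying it to the coinserter $P$ in both variables exhibits $P \times P : \A \times \A \to \B \times \B$ as a multiple bicategorical coinserter of the two parallel pairs obtained from $\overline{F},\overline{G}$ in the left and right factors. To define $\otimes_\B : \B \times \B \to \B$ I feed the functor $P \circ \otimes_\A : \A \times \A \to \B$ into this universal property; the required coinserting morphisms are supplied by $\overline{\delta}$ together with the associativity and symmetry isomorphisms of $\A$, which identify $(A \otimes F(i)) \otimes A'$ and $A' \otimes (A \otimes F(i))$ with $\overline{F}$ applied to suitable tensor products, up to coherent isomorphism. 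This yields $\otimes_\B$ with a coherence isomorphism $\otimes_\B \circ (P \times P) \myiso P \circ \otimes_\A$, making $P$ a (strong) tensor functor with unit $P(1_\A)$. The associator, unitor and braiding of $\B$ are obtained by the same descent argument (applying \cref{preservecolim} two or three times), and the coherence axioms for $\B$ follow from those of $\A$ by the uniqueness clause of the relevant universal properties. A useful by-product of this construction is that $\overline{\delta}$ is \emph{equivariant}: the coinserting $2$-cell used in the right factor identifies $\overline{\delta}_{(A,i)}$ with $\id_{P(A)} \otimes_\B \delta'_i$ up to the coherence isomorphisms.

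It remains to verify the universal property. The coinserter property of $P$ in $\cat$ already gives, for any tensor category $\C$, an equivalence between the category of plain functors $\B \to \C$ and the category of pairs $(H : \A \to \C,\, \theta : H \circ \overline{F} \to H \circ \overline{G})$. I would show that this equivalence restricts to the claimed one once we cut down to tensor functors. Sending $K \mapsto (K \circ P,\, K \circ \delta')$ defines the comparison functor; conversely, given a tensor functor $H$ and a morphism $\eta : H \circ F \to H \circ G$, I transport $\id_{H(A)} \otimes \eta_i$ along the tensor structure isomorphisms $H(A \otimes F(i)) \myiso H(A) \otimes H(F(i))$ and $H(A \otimes G(i)) \myiso H(A) \otimes H(G(i))$ to define $\theta_{(A,i)} : H(A \otimes F(i)) \to H(A \otimes G(i))$, obtain a plain functor $K : \B \to \C$ from the coinserter, and equip $K$ with a tensor structure by descending that of $H$ along $P \times P$ (again via the universal property of $\otimes_\B$). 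Restricting $\theta$ to $A = 1_\A$ recovers $\eta$, so the two assignments are mutually inverse up to coherent isomorphism.

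The crux of the argument --- and the step I expect to be the main obstacle --- is the equivalence at the level of the $\theta$'s: a morphism $\theta : H \circ \overline{F} \to H \circ \overline{G}$ underlies a \emph{tensor} functor out of $\B$ if and only if it is equivariant, i.e.\ of the form $\theta_{(A,i)} = H(A) \otimes \eta_i$ for $\eta \coloneqq \theta_{(1_\A,-)}$. This is the categorical shadow of the monoid identity $p(b \cdot f(a)) = p(b) \cdot p(f(a))$, and isolating exactly which compatibility with $\otimes_\B$ forces this form, while keeping track of the associativity, unit and symmetry constraints, is where all the coherence bookkeeping is concentrated. Everything else reduces to routine but lengthy diagram chases.
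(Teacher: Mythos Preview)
Your proposal is correct and follows essentially the same route as the paper: the same auxiliary functors $\overline{F},\overline{G}$, the same bicategorical coinserter in $\cat$, the same descent of the tensor structure via \cref{preservecolim}, and the equivariance of $\overline{\delta}$ you single out is exactly the paper's diagram~(\ref{diag5}) (obtained from~(\ref{diag2}) by setting $B=1_\A$). The paper carries out in full the coherence diagram chases you flag as ``routine but lengthy'', and its fully-faithfulness and essential-surjectivity arguments are precisely the two halves of your ``if and only if'' characterization of the equivariant $\theta$'s.
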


\begin{proof}
In order to reduce the formalism and keep the notation as simple as possible, we will be very sloppy and denote the components of a natural transformation just by the same symbol, even when we apply other functors to them. Also, we will usually just write $\sim$ when coherence isomorphisms in $\A$ are used.
 
We define an auxiliary functor (a ``stable'' version of $F$)
\[\overline{F} : \A \times \I \to \A,\, (A,i) \mapsto A \otimes F(i).\]
We define $\overline{G} : \A \times \I \to \A$ in the same way. Let $P : \A \to \B$ be the bicategorical coinserter of $\overline{F},\overline{G} : \A \times \I \rightrightarrows \A$ in $\cat$ (which exists by \cref{coins}); it comes equipped with a morphism of functors
\[\delta : P \circ \overline{F} \to P \circ \overline{G}.\]
Precomposing $\delta$ with the functor $\I \to \A \times \I$, $i \mapsto (1_\A,i)$ yields a morphism of functors $ \delta' : P \circ F \to P \circ G$. Recall that the definition of $P$ means (a) that for every functor $H : \A \to \C$ with a morphism $\gamma : H \circ \overline{F} \to H \circ \overline{G}$ there is a functor $K : \B \to \C$ with an isomorphism $K \circ P \myiso H$ such that $\gamma$ corresponds to $K \circ \delta$ under this isomorphism, and (b) that for two functors $K,L: \B \to \C$ any morphism $K \circ P \to L \circ P$ which is compatible with $\delta$ is induced by a unique morphism $K \to L$.
 
The next step is to define a tensor structure on $\B$. We define $1_\B \coloneqq P(1_\A)$, so that the identity is an isomorphism $\eta : 1_\B \myiso P(1_\A)$. Now consider the functor $P \circ \otimes_\A : \A^2 \to \A \to \B$, $(A,B) \mapsto P(A \otimes B)$, which we would like to extend to $\B^2$. We observe that the functor coinserts the functors $\overline{F} \times \A,\, \overline{G} \times \A : \A \times \I \times \A \rightrightarrows \A^2$ via the natural morphisms
\[\begin{tikzcd}[column sep=3ex]
P\bigl((A \otimes F(i)) \otimes B\bigr) \ar{r}{\sim} & P\bigl((B \otimes A) \otimes F(i)\bigr) \ar{r}{\delta} &   P\bigl((B \otimes A) \otimes G(i)\bigr) \ar{r}{\sim} & P\bigl((A \otimes G(i)) \otimes B\bigr).
\end{tikzcd}\]
Similarly, the functor coinserts the functors $\A \times \overline{F},\, \A \times \overline{G} : \A \times \A \times \I \rightrightarrows \A^2$ via the natural morphisms
\[\begin{tikzcd}[column sep=3ex]
P\bigl(A \otimes (B \otimes F(i))\bigr) \ar{r}{\sim} & P\bigl((A \otimes B) \otimes F(i)\bigr) \ar{r}{\delta} & P\bigl((A \otimes B) \otimes G(i)\bigr) \ar{r}{\sim} & P\bigl(A \otimes (B \otimes G(i))\bigr).
\end{tikzcd}\]
Hence, by definition of $P$ and \cref{preservecolim}, it follows that there is a functor $\otimes_\B : \B^2 \to \B$ with an isomorphism $\mu : \otimes_\B \circ P^2 \myiso P \circ \otimes_\A$, i.e.\ natural isomorphisms
\[\mu : P(A) \otimes_\B P(B) \myiso P(A \otimes_\A B)\]
for $A,B \in \A$ which together with $\delta$ induce the isomorphisms above. This means that the two following diagrams commute (we will abbreviate $\otimes_\B$ and $\otimes_\A$ by $\otimes$):

\begin{equation} \label{diag1}
\begin{tikzcd}[row sep=5ex]
P\bigl(A \otimes F(i)\bigr) \otimes P(B) \ar{r}{\mu} \ar{d}[swap]{\delta} & P\bigl((A \otimes F(i)) \otimes B\bigr) \ar{r}{\sim} & P\bigl((B \otimes A) \otimes F(i)\bigr) \ar{d}{\delta} \\
P\bigl(A \otimes G(i)\bigr) \otimes P(B) \ar{r}[swap]{\mu} & P\bigl((A \otimes G(i)) \otimes B\bigr) \ar{r}[swap]{\sim} & P\bigl((B \otimes A) \otimes G(i)\bigr)
\end{tikzcd}
\end{equation}
\vphantom{I am some space}
\begin{equation} \label{diag2}
\begin{tikzcd}[row sep=5ex]
P(A) \otimes P\bigl(B \otimes F(i)\bigr) \ar{r}{\mu} \ar{d}[swap]{\delta} & P\bigl(A \otimes (B \otimes F(i))\bigr) \ar{r}{\sim} & P\bigl((A \otimes B) \otimes F(i)\bigr) \ar{d}{\delta} \\
P(A) \otimes P\bigl(B \otimes G(i)\bigr) \ar{r}[swap]{\mu} & P\bigl(A \otimes (B \otimes G(i))\bigr) \ar{r}[swap]{\sim} & P\bigl((A \otimes B) \otimes G(i)\bigr) 
\end{tikzcd}
\end{equation}
In order to define the natural coherence isomorphisms $\rho : U \otimes P(1_\A) \myiso U$, $\lambda : P(1_\A) \otimes U \myiso U$ for $ U \in \B$, it suffices (by part (b) in the definition of $P$) to construct natural isomorphisms $\rho' : P(A) \otimes P(1_\A) \myiso P(A)$, $\lambda' : P(1_\A) \otimes P(A) \myiso P(A)$ for $A \in \A$ and check their compatibility with $\delta$. We define them to be (using the coherence isomorphisms in $\A$)
\[\begin{tikzcd}
P(A) \otimes P(1_\A) \ar{r}{\mu} & P(A \otimes 1_\A) \ar{r}{\rho} &  P(A), \\[-2ex]
P(1_\A) \otimes P(A)  \ar{r}{\mu} & P(1_\A \otimes A) \ar{r}{\lambda} & P(A).
\end{tikzcd}\]
This is actually the only choice we have since we want $P$ to become a tensor functor later. The compatibility of $\rho'$ with $\delta$ follows from the following commutative diagram:
\[\begin{tikzcd}[row sep = 6ex, column sep = 7ex]
P\bigl(A \otimes F(i)\bigr) \otimes P(1_\A) \ar{r}{\mu} \ar{ddd}[swap]{\delta} & P\bigl((A \otimes F(i)) \otimes 1_\A\bigr) \ar{r}{\rho} \ar{d}[swap]{\sim} &  P\bigl(A \otimes F(i)\bigr) \ar{ddd}{\delta} \\
& P\bigl((1_\A \otimes A) \otimes F(i)\bigr) \ar{d}[swap]{\delta} \ar{ur}[swap]{\lambda} & \\
& P\bigl((1_\A \otimes A) \otimes G(i)\bigr) \ar{dr}{\lambda} &  \\
P\bigl(A \otimes G(i)\bigr) \otimes P(1_\A) \ar{r}[swap]{\mu} & P\bigl((A \otimes G(i)) \otimes 1_\A\bigr) \ar{r}[swap]{\rho} \ar{u}{\sim} &  P\bigl(A \otimes G(i)\bigr)
\end{tikzcd}\]
Here, the left rectangle is diagram (\ref{diag1}) in a special case, the two triangles commute because of coherence in $\A$, and the trapezoid commutes because of naturality of $\delta$. The compatibility of $\lambda'$ with $\delta$ follows from a similar commutative diagram:
\[\begin{tikzcd}[row sep = 6ex, column sep = 7ex]
P(1_\A) \otimes P\bigl(A \otimes F(i)\bigr) \ar{r}{\mu} \ar{ddd}[swap]{\delta} & P\bigl(1_\A \otimes (A \otimes F(i))\bigr) \ar{r}{\lambda} \ar{d}[swap]{\sim} & P\bigl(A \otimes F(i)\bigr) \ar{ddd}{\delta} \\
& P\bigl((1_\A \otimes A) \otimes F(i)\bigr) \ar{d}[swap]{\delta} \ar{ur}[swap]{\lambda} \\
& P\bigl((1_\A \otimes A) \otimes G(i)\bigr)  \ar{dr}{\lambda} \\
P(1_\A) \otimes P\bigl(A \otimes G(i)\bigr) \ar{r}[swap]{\mu}  & P\bigl(1_\A \otimes (A \otimes G(i))\bigr) \ar{r}[swap]{\lambda} \ar{u}{\sim} & P\bigl(A \otimes G(i)\bigr) 
\end{tikzcd}\]
Here, the left rectangle is diagram (\ref{diag2}) in a special case, the two triangles commute because of coherence in $\A$, and the trapezoid commutes because of naturality of $\delta$.
 
In order to define the associator $\alpha$ on $\B$, it suffices to construct natural isomorphisms
\[\alpha' : \bigl(P(A) \otimes P(B)\bigr) \otimes P(C) \myiso P(A) \otimes \bigl(P(B) \otimes P(C)\bigr)\]
for $A,B,C \in \A$ and check their compatibility with $\delta$ in all three variables. Since we want $P$ to become a tensor functor, we have no choice but to define $\alpha'$ by the commutativity of
\[\begin{tikzcd}[row sep = 5ex]
\bigl(P(A) \otimes P(B)\bigr) \otimes P(C) \ar{d}[swap]{\mu} \ar{r}{\alpha'} & P(A) \otimes \bigl(P(B) \otimes P(C)\bigr)  \ar{d}{\mu} \\ P(A \otimes B) \otimes P(C) \ar{d}[swap]{\mu} & P(A) \otimes P(B \otimes C) \ar{d}{\mu} \\ P\bigl((A \otimes B) \otimes C\bigr) \ar{r}[swap]{\alpha}   & P\bigl(A \otimes (B \otimes C)\bigr).
\end{tikzcd}\]
The compatibility of $\alpha'$ in the first variable follows from the following commutative diagram; for simplicity we have dropped all $\otimes$-symbols and have replaced $F(i)$ by $F$.
\[\begin{tikzcd}[row sep = 5ex, column sep = 4ex]
\bigl(P(A F) P(B)\bigr) P(C) \ar{rrr}{\delta} \ar{d}[swap]{\mu} & & & \bigl(P(A G) P(B)\bigr) P(C) \ar{d}{\mu} \\
P\bigl((A F) B\bigr) P(C) \ar{r}{\sim} \ar{d}[swap]{\mu} & P\bigl((BA) F\bigr) P(C) \ar{r}{\delta} \ar{d}[swap]{\mu} & P\bigl((BA) G\bigr) P(C) \ar{d}{\mu} &  P\bigl((A G) B\bigr) P(C) \ar{l}[swap]{\sim} \ar{d}{\mu} \\
P\bigl(((A F) B) C\bigr) \ar{dd}[swap]{\sim} \ar{r}{\sim} & P\bigl(((BA) F) C\bigr) \ar{d}[swap]{\sim} & P\bigl(((BA) G) C\bigr) \ar{d}{\sim}  & P\bigl(((A G) B) C\bigr) \ar{dd}{\sim} \ar{l}[swap]{\sim} \\
 & P\bigl((C(BA)) F\bigr) \ar{r}[swap]{\delta} \ar{d}[swap]{\sim} & P\bigl((C(BA)) G\bigr) \ar{d}{\sim} & \\
P\bigl((A F) (B C)\bigr) \ar{r}{\sim} & P\bigl(((BC)A) F\bigr) \ar{r}[swap]{\delta} & P\bigl(((BC)A) G\bigr)  & P\bigl((A G) (B C)\bigr) \ar{l}[swap]{\sim}  \\
P(A F) P(B C) \ar{u}{\mu} \ar{rrr}[swap]{\delta} &&& P(A G) P(B C) \ar{u}[swap]{\mu} \\
P(A F) \bigl(P(B) P(C)\bigr) \ar{u}{\mu} \ar{rrr}[swap]{\delta} &&& P(A G) \bigl(P(B) P(C)\bigr) \ar{u}[swap]{\mu}
\end{tikzcd}\]
Here, we have used diagram (\ref{diag1}) three times, two times coherence in $\A$, and the rest commutes because of naturality. The compatibility of $\alpha'$ in the third variable follows from a very similar diagram (which is just upside down) which uses diagram (\ref{diag2}) three times. The compatibility of $\alpha'$ in the second variable follows from a different diagram:
\[\begin{tikzcd}[row sep = 5ex, column sep = 4ex]
\bigl(P(A) P(BF)\bigr) P(C) \ar{d}[swap]{\mu}\ar{rrr}{\delta} &&& P(A) \bigl( P(BG) P(C) \bigr) \ar{d}{\mu} \\
P\bigl(A(BF)\bigr) P(C) \ar{d}[swap]{\mu} \ar{r}{\sim} & P\bigl((AB)F\bigr) P(C) \ar{d}[swap]{\mu} \ar{r}{\delta} & P\bigl((AB)G\bigr) P(C) \ar{d}{\mu} &  P\bigl(A(BG)\bigr) P(C) \ar{l}[swap]{\sim} \ar{d}{\mu} \\
P\bigl((A(BF))C\bigr) \ar{ddd}[swap]{\sim} \ar{r}{\sim} & P\bigl(((AB)F)C\bigr) \ar{d}[swap]{\sim} & P\bigl(((AB)G)C\bigr) \ar{d}{\sim} & P\bigl((A(BG))C\bigr) \ar{l}[swap]{\sim} \ar{ddd}{\sim} \\
& P\bigl((C(AB))F\bigr)  \ar{d}[swap]{\sim} \ar{r}{\delta} & P\bigl((C(AB))G\bigr) \ar{d}{\sim}& \\
& P\bigl((A(CB))F\bigr) \ar{r}[swap]{\delta} & P\bigl((A(CB))G\bigr) & \\
P\bigl(A((BF)C)\bigr) \ar{r}[swap]{\sim} & P\bigl(A((CB)F)\bigr) \ar{u}{\sim} & P\bigl(A((CB)G)\bigr) \ar{u}[swap]{\sim} & \ar{l}{\sim} P\bigl(A((BG)C)\bigr)\\
P(A) P\bigl((BF)C\bigr) \ar{u}{\mu} \ar{r}[swap]{\sim} & P(A) P\bigl((CB)F\bigr) \ar{r}[swap]{\delta} \ar{u}{\mu} & P(A) P\bigl((CB)G\bigr) \ar{u}[swap]{\mu} & P(A) P\bigl((BG)C\bigr) \ar{u}[swap]{\mu} \ar{l}{\sim} \\
P(A) \bigl(P(BF) P(C)\bigr) \ar{u}{\mu} \ar{rrr}[swap]{\delta} &&& P(A) \bigl(P(BG) P(C)\bigr) \ar{u}[swap]{\mu}
\end{tikzcd}\]
Here, we have used each of the diagrams (\ref{diag1}), (\ref{diag2}) twice, two times coherence in $\A$, and the rest commutes because of naturality. This finishes the construction of the associator $\alpha$ on $\B$.

In order to define the symmetry $\sigma$ on $\B$, it is sufficient to define natural isomorphisms $\sigma' : P(A) \otimes P(B) \to P(B) \otimes P(A)$ for $A,B \in \A$ and check their compatibility with $\delta$ in both variables. The only reasonable choice is (using the symmetry $\sigma$ on $\A$)
\[\begin{tikzcd}
P(A) \otimes P(B) \ar{r}{\mu} & P(A \otimes B) \ar{r}{\sigma} & P(B \otimes A) \ar{r}{\mu^{-1}} & P(B) \otimes P(A).
\end{tikzcd}\]
The compatibility of $\sigma'$ in the first variable follows from the following commutative diagram:
\[\begin{tikzcd}[row sep=6ex, column sep=4ex]
   & P((AF)B) \ar{dr}[swap]{\sim} \ar{rr}{\sigma} && P(B(AF))  \ar{dl}{\sim} & \\
P(A F) P(B) \ar{ur}{\mu} \ar{d}[swap]{\delta} && P((BA)F) \ar{d}{\delta} && P(B) P(AF) \ar{ul}[swap]{\mu} \ar{d}{\delta}  \\
P(A G) P(B) \ar{dr}[swap]{\mu} && P((BA)G)  && P(B) P(AG) \ar{dl}{\mu}  \\
  & P((AG)B) \ar{rr}[swap]{\sigma} \ar{ur}{\sim} && P(B(AG)) \ar{ul}[swap]{\sim} & 
\end{tikzcd}\]
Here, the two triangles commute because of coherence in $\A$, and the two hexagons commute because of diagrams (\ref{diag1}) and (\ref{diag2}). The compatibility of $\sigma'$ in the second variable follows from a similiar commutative diagram, which we omit here.
 
In order to check the coherence diagrams in the definition of a tensor category, it suffices to check them when composed with $P$. But then, since by construction $P$ is compatible with the coherence isomorphisms $\lambda,\rho,\alpha,\sigma$, we can just use the coherence diagrams in $\A$ and are done. Thus, $\B$ with the given data becomes a tensor category, and by construction $P : \A \to \B$ becomes a tensor functor. (To be precise, the tuple $(\B,\otimes,\lambda,\rho,\alpha,\sigma)$ is a tensor category and the tuple $(P,\eta,\mu)$ is a tensor functor, but we will use this common abuse of notation.)
 
As already mentioned in the beginning, there is a morphism of functors $\delta' : P \circ F \to P \circ G$, and what is left is to prove the universal property of $P$ as a tensor functor, i.e.\ that for every small tensor category $\C$ the functor
\[Q : \Hom_{\otimes}(\B,\C) \to \{(H,\gamma) : H \in \Hom_{\otimes}(\A,\C),\, \gamma : H \circ F \to H \circ G\},\, K \mapsto (K \circ P, K \circ \delta')\]
is an equivalence of categories.

The functor $Q$ is fully faithful: Let $K,L : \B \to \C$ be two tensor functors with a morphism $\vartheta : (K \circ P,K \circ \delta') \to (L \circ P,L \circ \delta')$ in the target category, i.e.\ $\vartheta : K \circ P \to L \circ P$ is a morphism of tensor functors which is compatible with $\delta'$, which means that
\begin{equation} \label{diag3}
\begin{tikzcd}[row sep = 5ex]
K \circ P \circ F \ar{r}{\vartheta} \ar{d}[swap]{\delta'} & L \circ P \circ F \ar{d}{\delta'} \\
K \circ P \circ G \ar{r}[swap]{\vartheta} & L \circ P \circ G
\end{tikzcd}
\end{equation}
commutes. Then $\vartheta$ is even compatible with $\delta$, i.e.\ also the diagram
\begin{equation} \label{diag4}
\begin{tikzcd}[row sep = 5ex]
K \circ P \circ \overline{F} \ar{r}{\vartheta} \ar{d}[swap]{\delta} & L \circ P \circ \overline{F} \ar{d}{\delta} \\
K \circ P \circ \overline{G} \ar{r}[swap]{\vartheta} & L \circ P \circ \overline{G}
\end{tikzcd}
\end{equation}
commutes: To see this, first notice that diagram (\ref{diag2}) implies that
\begin{equation} \label{diag5}
\begin{tikzcd}[row sep = 5ex]
P(A) \otimes P(F(i)) \ar{d}[swap]{\delta'} \ar{r}{\mu} & P(A \otimes F(i)) \ar{d}{\delta} \\
P(A) \otimes P(G(i)) \ar{r}[swap]{\mu} & P(A \otimes G(i))
\end{tikzcd}
\end{equation}
commutes. Now consider the following diagram (for better readability, we have omitted some brackets and the index $i$):
\[\begin{tikzcd}[column sep = 1ex, row sep = 3ex]
KPA \otimes KPF \ar{ddd}[swap]{\delta'} \ar{rrr}{\vartheta \otimes \vartheta}\ar{dr}{\sim}  &&& LPA \otimes LPF \ar{dr}{\sim}\ar{ddd}{\delta'} && \\
& K(PA \otimes PF) \ar{dr}{\mu} &&& L(PA \otimes PF) \ar{dr}{\mu} \ar{ddd}{\delta'}& \\
&& KP(A \otimes F)  &&& LP(A \otimes F) \ar{ddd}{\delta} \ar[from=lll,crossing over,"\vartheta"]\\
KPA \otimes KPG \ar{rrr}[swap]{\vartheta \otimes \vartheta} \ar{dr}[swap]{\sim} &&& LPA \otimes LPG \ar{dr}[swap]{\sim} && \\
& K(PA \otimes PG) \ar{dr}[swap]{\mu} \ar[from=uuu,crossing over,swap,"\delta'"] &&& L(PA \otimes PG) \ar{dr}[swap]{\mu} & \\
&& KP(A \otimes G) \ar{rrr}[swap]{\vartheta} \ar[from=uuu,crossing over,swap,"\delta"] &&& LP(A \otimes G)
\end{tikzcd}
\]
The bottom and top faces commute because $\vartheta$ is a morphism of tensor functors, the two squares on the left face commute because of naturality and of diagram (\ref{diag5}), the same holds for the right face, and the back face commutes because of diagram (\ref{diag3}). Hence, the front face commutes as well, which is exactly diagram (\ref{diag4}).

Since $\vartheta$ is compatible with $\delta$, by part (b) in the definition of $P$ there is a unique morphism of functors $\pi : K \to L$ with $\vartheta = \pi \circ P$. Since $\vartheta$ is a morphism of tensor functors, it follows easily from part (b) in the definition of $P$ that $\pi$ is a morphism of tensor functors as well. Thus, there is a unique morphism of tensor functors $\pi : K \to L$ with $\vartheta = \pi \circ P$, which finishes the proof of fully faithfulness.
 
The functor $Q$ is essentially surjective: Let $H : \A \to \C$ be a tensor functor with a morphism of functors $\gamma : H \circ F \to H \circ G$. We can extend it to a morphism of functors $\tilde{\gamma} : H \circ \overline{F} \to H \circ \overline{G}$ by
\[\begin{tikzcd}
H(A \otimes F(i)) \ar{r}{\sim} & H(A) \otimes H(F(i)) \ar{r}{\gamma} & H(A) \otimes H(G(i)) \ar{r}{\sim} & H(A \otimes G(i)).
\end{tikzcd}\]
By part (a) in the definition of $P$ there is a functor $K : \B \to \C$ with an isomorphism of functors $\varepsilon : K \circ P \myiso H$ such that
\begin{equation} \label{diag6}
\begin{tikzcd}[row sep = 5ex]
K \circ P \circ \overline{F} \ar{d}[swap]{\delta} \ar{r}{\varepsilon} & H \circ \overline{F} \ar{d}{\tilde{\gamma}} \\
K \circ P \circ \overline{G} \ar{r}[swap]{\varepsilon} & H \circ \overline{G}
\end{tikzcd}
\end{equation}
commutes. Then clearly also
\[\begin{tikzcd}[row sep = 5ex]
K \circ P \circ F \ar{d}[swap]{\delta'} \ar{r}{\varepsilon} & H \circ F \ar{d}{\gamma} \\
K \circ P \circ G \ar{r}[swap]{\varepsilon} & H \circ G
\end{tikzcd}\]
commutes.

We now have to make $K$ into a tensor functor in such a way that $\varepsilon$ becomes an isomorphism of tensor functors. Again, we have only one choice. We have the isomorphism
\[1_\C \myiso H(1_\A) \xrightarrow{\varepsilon^{-1}} K(P(1_\A)) = K(1_\B).\]
In order to construct natural isomorphisms $K(U) \otimes K(V) \myiso K(U \otimes V)$ for $U,V \in \B$, it suffices to construct natural isomorphisms $K(P(A)) \otimes K(P(B)) \myiso K(P(A) \otimes P(B))$ for $A,B \in \A$ which are compatible with $\delta$ in both variables. We define them by
\[K(P(A)) \otimes K(P(B)) \xrightarrow{\varepsilon \otimes \varepsilon} H(A) \otimes H(B) \myiso H(A \otimes B) \xrightarrow{\varepsilon^{-1}\!} K(P(A \otimes B)) \xrightarrow{\mu^{-1}\!} K\bigl(P(A) \otimes P(B)\bigr).\]
The following diagram shows the compatibility of these isomorphisms with $\delta$ in the first variable; the diagram for the second variable is similar. Again, we simplify the notation.
\[\begin{tikzcd}[column sep = 3ex, row sep = 5ex]
KP(A \otimes F) \otimes KPB \ar{rrr}{\delta} \ar{d}[swap]{\varepsilon \otimes \varepsilon} &&& KP(A \otimes G) \otimes KPB \ar{d}{\varepsilon \otimes \varepsilon} \\
H(A \otimes F) \otimes HB \ar{rrr}{\tilde{\gamma}} \ar{ddd}[swap]{\sim} &&& H(A \otimes G) \otimes HB \ar{ddd}{\sim} \\
& (HA \otimes HF) \otimes HB \ar{r}{\gamma} \ar{ul}[swap]{\sim} \ar{d}[swap]{\sim} & (HA \otimes HG) \otimes HB \ar{ur}{\sim} \ar{d}{\sim}  & \\
& H(B \otimes A) \otimes HF \ar{r}{\gamma} \ar{d}[swap]{\sim} & H(B \otimes A) \otimes HG \ar{d}{\sim} & \\
H\bigl((A \otimes F) \otimes B\bigr) \ar{r}{\sim} \ar{d}[swap]{\varepsilon^{-1}} & H\bigl((B \otimes A) \otimes F\bigr) \ar{r}{\tilde{\gamma}} \ar{d}[swap]{\varepsilon^{-1}} & H\bigl((B \otimes A) \otimes G\bigr) \ar{d}{\varepsilon^{-1}} & H\bigl((A \otimes G) \otimes B\bigr) \ar{l}[swap]{\sim} \ar{d}{\varepsilon^{-1}} \\
KP\bigl((A \otimes F) \otimes B\bigr) \ar{r}{\sim} \ar{d}[swap]{\mu^{-1}} & KP\bigl((B \otimes A) \otimes F\bigr) \ar{r}{\delta} & KP\bigl((B \otimes A) \otimes G\bigr) & KP\bigl((A \otimes G) \otimes B\bigr) \ar{l}[swap]{\sim} \ar{d}{\mu^{-1}} \\
K\bigl(P(A \otimes F) \otimes PB\bigr) \ar{rrr}[swap]{\delta} &&& K\bigl(P(A \otimes G) \otimes PB\bigr)
\end{tikzcd}\]
The rectangle on the top commutes because of diagram (\ref{diag6}), the trapezoid underneath it commutes by the definition of $\tilde{\gamma}$, the rectangle underneath it for trivial reasons, the rectangle underneath it by the definition of $\tilde{\gamma}$, the rectangle underneath it by diagram (\ref{diag6}). The two other trapezoids commute because of coherence of the tensor structure on $H$, the rectangles underneath them commute because of naturality. The rectangle on the bottom commutes because of diagram (\ref{diag1}).

This finishes the construction of the natural isomorphisms $K(U) \otimes K(V) \myiso K(U \otimes V)$ for $U,V \in \B$. The coherence diagrams for the tensor structure on $K$ follow from the corresponding diagrams for $H$. By construction of the tensor structure on $K$ the isomorphism $\varepsilon : K \circ P \myiso H$ is compatible with the tensor structures. This finishes the proof.
\end{proof}

\begin{rem} \label{coisoinsish}
With the notation of \cref{coinsish}, a similar construction gives a universal tensor functor $P : \C \to \D$ with an \emph{isomorphism} $P \circ F \myiso P \circ G$ of functors, using the bicategorical coequalizer (i.e.\ coisoinserter) instead of the bicategorical coinserter of $\overline{F},\overline{G}$ in $\cat$.
\end{rem}

\begin{rem} \label{monoidalcoinsish}
There is a similar construction in the category of small monoidal categories. Namely, if $\I$ is a small category and $F,G : \I \rightrightarrows \A$ are two functors into a small monoidal category $\A$, we define the auxiliary functor $\overline{F} : \A \times \I \times \A \to \A$ by $\overline{F}(A,i,B) \coloneqq A \otimes F(i) \otimes B$, similarly $\overline{G}$. Then one can endow the bicategorical coinserter $P : \A \to \B$ of $\overline{F},\overline{G}$ in $\cat$ with a monoidal structure etc.\ The proof is very similar to the one of \cref{coinsish}. It is remarkable that even when $\A$ is a symmetric monoidal category (i.e.\ a tensor category) in this setting, it is not possible to define a symmetry $\sigma$ on $\B$, since it is not clear why the auxiliary maps $\sigma' : P(A) \otimes P(B) \myiso P(B) \otimes P(A)$ are compatible with $\delta : P \circ \overline{F} \to P \circ \overline{G}$ in this case. Notice that this problem is not visible in the decategorified setting of commutative monoids, where the forgetful functor from commutative monoids to monoids creates coequalizers.
\end{rem}

\begin{rem} \label{twosteps}
Notice that \cref{coinsish} is not sufficient for the construction of bicategorical coinserters (or coequalizers, cf.\ \cref{coisoinsish}) in $\cat_{\otimes}$, since even when $\I$ is a tensor category and $F,G : \I \rightrightarrows \A$ are tensor functors, there is no reason why $\delta' : P \circ F \to P \circ G$ should be a morphism of \emph{tensor functors}. In fact, we will need coequifiers to fix this. Notice that this second step is not necessary in the decategorified setting of commutative monoids.
\end{rem}
  
\begin{prop} \label{coequifish}
Let $\A$ be a small tensor category, $\I$ be a small category and $F,G : \I \rightrightarrows \A$ be two functors into (the underlying category of) $\A$. Let $\alpha,\beta : F \rightrightarrows G$ be two morphisms of functors. Then there is a small tensor category $\B$ with a tensor functor $P : \A \to \B$ satisfying $P \circ \alpha = P \circ \beta$ which is universal: For every small tensor category $\C$ we get an equivalence of categories between the category of tensor functors $\B \to \C$ and the category of tensor functors $H : \A \to \C$ satisfying $H \circ \alpha = H \circ \beta$.
\end{prop}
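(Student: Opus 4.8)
The plan is to mirror the construction in \cref{coinsish} as closely as possible, the decisive simplification being that a coequifier only imposes an \emph{equation} $P\circ\alpha = P\circ\beta$ rather than inserting a new $2$-cell $\delta$; consequently there will be no compatibility conditions with $\delta$ to check, and the whole argument becomes genuinely shorter than that of \cref{coinsish}. First I would stabilize the given $2$-cells. With $\overline{F},\overline{G}:\A\times\I\rightrightarrows\A$ defined as in \cref{coinsish} by $\overline{F}(A,i)=A\otimes F(i)$ and $\overline{G}(A,i)=A\otimes G(i)$, I set $\overline{\alpha}_{(A,i)} \coloneqq \id_A\otimes\alpha_i$ and $\overline{\beta}_{(A,i)}\coloneqq\id_A\otimes\beta_i$; naturality of $\alpha,\beta$ makes these morphisms of functors $\overline{\alpha},\overline{\beta}:\overline{F}\rightrightarrows\overline{G}$. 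Let $P:\A\to\B$ be the bicategorical coequifier of $\overline{\alpha},\overline{\beta}$ in $\cat$, which exists by \cref{coequifiers}; thus $P\circ\overline{\alpha}=P\circ\overline{\beta}$, and $P$ enjoys the two universal properties (a) and (b) recalled in \cref{coinsish}, except that for a coequifier property (b) carries \emph{no} side condition, since the target of the universal property is a full subcategory.

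The heart of the proof is to equip $\B$ with a tensor structure, and as in \cref{coinsish} I would produce $\otimes_\B:\B^2\to\B$ with an isomorphism $\mu:P(A)\otimes_\B P(B)\myiso P(A\otimes_\A B)$ from the universal property applied in two variables. By \cref{preservecolim} the functor $P\times P:\A^2\to\B^2$ is the multiple bicategorical coequifier of the pairs $\overline{\alpha}\times\id_\A,\overline{\beta}\times\id_\A$ and $\id_\A\times\overline{\alpha},\id_\A\times\overline{\beta}$, so it suffices to check that $P\circ\otimes_\A:\A^2\to\B$ coequifies these two pairs. For the second variable this is immediate after applying the associator, which turns $\id_A\otimes(\id_{A'}\otimes\alpha_i)$ into $\id_{A\otimes A'}\otimes\alpha_i=\overline{\alpha}_{(A\otimes A',i)}$; for the first variable one additionally uses the symmetry of $\A$ to move $A'$ past $F(i)$ and $G(i)$, rewriting $(\id_A\otimes\alpha_i)\otimes\id_{A'}$ as $\id_{A\otimes A'}\otimes\alpha_i$ up to a coherence isomorphism natural in the variable $F(i)\to G(i)$. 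In both cases $P\circ\overline{\alpha}=P\circ\overline{\beta}$ then yields the desired coequifying. Having $\otimes_\B$ and $\mu$, I would set $1_\B\coloneqq P(1_\A)$ and \emph{define} the unitors, associator and symmetry on $\B$ by transporting those of $\A$ along $\mu$, exactly as in \cref{coinsish}; because a coequifier imposes no $2$-cell data, part (b) (applied to functors such as $(U,V,W)\mapsto(U\otimes V)\otimes W$, legitimate since $P^n:\A^n\to\B^n$ is again a multiple coequifier by \cref{preservecolim}) extends these isomorphisms uniquely from the image of $P$ to all of $\B$ with no $\delta$-compatibility to verify. The coherence diagrams hold on the image of $P$ since $P$ is compatible with the structure isomorphisms by construction, hence reduce to coherence in $\A$ and therefore hold everywhere. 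This makes $\B$ a tensor category and $P$ a tensor functor.

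Next I would check $P\circ\alpha=P\circ\beta$: restricting the equality $P\circ\overline{\alpha}=P\circ\overline{\beta}$ to $A=1_\A$ gives $P(\id_{1_\A}\otimes\alpha_i)=P(\id_{1_\A}\otimes\beta_i)$, and composing with the (preserved) left unitor identifies these with $P(\alpha_i)$ and $P(\beta_i)$. Finally, for the tensor universal property I would show that $K\mapsto K\circ P$ is an equivalence from $\Hom_{\otimes}(\B,\C)$ onto the full subcategory of tensor functors $H:\A\to\C$ with $H\circ\alpha=H\circ\beta$. Essential surjectivity is where the tensor structure of $H$ enters: its structure isomorphism converts $H(\id_A\otimes\alpha_i)$ into $\id_{H(A)}\otimes H(\alpha_i)$, so $H\circ\alpha=H\circ\beta$ forces $H\circ\overline{\alpha}=H\circ\overline{\beta}$; thus $H$ factors through $P$ by part (a), and the factorization is upgraded to a tensor functor exactly as in the essential surjectivity part of \cref{coinsish}. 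Full faithfulness follows from part (b) as in \cref{coinsish}, noting that any tensor transformation $K\circ P\to L\circ P$ descends uniquely to a tensor transformation $K\to L$.

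The main obstacle is the verification that $P\circ\otimes_\A$ coequifies the stabilized pairs in each variable; this is the only genuine computation, and it rests on reducing a one-sided tensoring such as $(\id_A\otimes\alpha_i)\otimes\id_{A'}$ to the stabilized form $\id_{A\otimes A'}\otimes\alpha_i$ by means of the symmetry and associativity of $\A$. Beyond this, the proof is strictly easier than \cref{coinsish}: because no $2$-cell $\delta$ is inserted, all the compatibility-with-$\delta$ diagrams that dominated the construction of the tensor structure and the proof of the universal property there simply disappear.
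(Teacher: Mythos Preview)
Your proposal is correct and follows essentially the same approach as the paper: stabilize $\alpha,\beta$ to $\overline{\alpha},\overline{\beta}:\overline{F}\rightrightarrows\overline{G}$, take their bicategorical coequifier $P:\A\to\B$ in $\cat$, use \cref{preservecolim} to show that $P\circ\otimes_\A$ coequifies the stabilized pairs in each variable (via the associator and, for the first variable, the symmetry), then transport the coherence isomorphisms along $\mu$ and verify the universal property. You also correctly identify the key simplification over \cref{coinsish}, namely that no $\delta$-compatibility diagrams arise because a coequifier imposes only an equation.
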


\begin{proof}
As in the proof of \cref{coinsish}, we define an auxiliary functor $\overline{F} : \A \times \I \to \A$ by $\overline{F}(A,i) \coloneqq A \otimes F(i)$, similarly $\overline{G} : \A \times \I \to \A$. We also define a morphism of functors
\[\overline{\alpha} : \overline{F} \to \overline{G},\, \overline{\alpha}(A,i) \coloneqq A \otimes \alpha(i),\]
similarly $\overline{\beta}$. By \cref{coequifiers} these morphisms $\overline{\alpha},\overline{\beta}$ have a coequifier $P : \A \to \B$ in $\cat$. This means that for every small category $\C$ the functor $P$ induces an equivalence of categories between functors $\B \to \C$ and those functors $\A \to \C$ which coequify $\overline{\alpha},\overline{\beta}$. By \cref{preservecolim} it follows more generally that functors $\B^n \to \C$ correspond to functors $\A^n \to \C$ which coequify $\overline{\alpha},\overline{\beta}$ in each variable.

Our next task is to define a tensor structure on $\B$ and $P$. We define $1_\B \coloneqq P(1_\A)$ and let $\eta : 1_\B \myiso P(1_\A)$ be the identity. The functor $P \circ \otimes_\A : \A^2 \to \B$, $(A,B) \mapsto P(A \otimes B)$ coequifies $\overline{\alpha},\overline{\beta}$ in each variable, i.e.\ for all $A,B \in \A$ and $i \in \I$ we have
\begin{align*}
P(\overline{\alpha}(A,i) \otimes B) &= P(\overline{\beta}(A,i) \otimes B),\\
P(A \otimes \overline{\alpha}(B,i)) &= P(A \otimes \overline{\beta}(B,i)).
\end{align*}
This follows easily from the following commutative diagrams for $\overline{\alpha}$ and the corresponding ones for $\overline{\beta}$.
\[\begin{tikzcd}[row sep = 7ex]
(A \otimes F(i)) \otimes B \ar{r}{\sim} \ar{d}[swap]{\overline{\alpha}(A,i) \otimes B} & (A \otimes B) \otimes F(i) \ar{d}[description]{\overline{\alpha}(A \otimes B,i)} & A \otimes (B \otimes F(i)) \ar{d}{A \otimes \overline{\alpha}(B,i)} \ar{l}[swap]{\sim} \\
(A \otimes G(i)) \otimes B \ar{r}[swap]{\sim}  & (A \otimes B) \otimes G(i) &  A \otimes (B \otimes G(i)) \ar{l}{\sim}
\end{tikzcd}
\]
Hence, there is a functor $\otimes_\B : \B^2 \to \B$ with an isomorphism $\mu : \otimes_\B \circ P^2 \myiso P \circ \otimes_\A$, i.e.\ natural isomorphisms $\mu : P(A) \otimes P(B) \myiso P(A \otimes B)$.

In order to construct natural isomorphisms $\rho : U \otimes P(1_\A) \myiso U$ for $U \in \B$, it suffices to construct natural isomorphisms $\rho' : P(A) \otimes P(1_\A) \myiso P(A)$ for $A \in \A$ (here, in contrast to the proof of \cref{coinsish}, we do not have to verify any further condition), which we choose of course to be the composition of $\mu : P(A) \otimes P(1_\A) \myiso P(A \otimes 1_\A)$ and $P(\rho) : P(A \otimes 1_\A) \myiso P(A)$. In a similar way we can construct the other coherence isomorphisms $\lambda : P(1_\A) \otimes U \myiso U$, $\alpha : (U \otimes V) \otimes W \myiso U \otimes (V \otimes W)$ and $\sigma : U \otimes V \myiso V \otimes U$ for $U,V,W \in \B$.

The coherence diagrams in $\B$ follow immediately from the ones in $\A$, since in general two morphisms of functors on $\B^n$ are equal if they are equal on $\A^n$ after precomposing with $P^n$. This way $\B$ becomes a tensor category, and by construction $P : \A \to \B$ becomes a tensor functor with the isomorphisms $\eta,\mu$.

We have $P \circ \alpha = P  \circ \beta$ because of $P \circ  \overline{\alpha} = P  \circ \overline{\beta}$ and the following commutative diagram for $\overline{\alpha}$ which likewise holds for $\overline{\beta}$.
\[\begin{tikzcd}[row sep = 6ex]
1_\A \otimes F(i) \ar{r}{\sim} \ar{d}[swap]{\overline{\alpha}(1_\A,i)} & F(i) \ar{d}{\alpha(i)} \\
1_\A \otimes G(i) \ar{r}[swap]{\sim} & G(i)
\end{tikzcd}\]

Let us show the universal property of $P$ in $\cat_{\otimes}$. We start with fully faithfulness. Let $ K,L : \B \to \C$ be two tensor functors and consider a morphism $K \circ P \to L \circ P$ of tensor functors. We have to show that it is induced by a unique morphism of tensor functors $K \to L$. By the universal property of $P$ in $\cat$, it is induced by a unique morphism of functors $K \to L$, so we just have to show its compatibility with the tensor structure. We only show compatibility with binary tensor products, the compatibility with the unit objects is also easy. By the universal property of $P$, it suffices to show that the left square in the diagram
\[\begin{tikzcd}[row sep = 5ex]
K(P(A)) \otimes K(P(B)) \ar{r}{\sim} \ar{d} & K\bigl(P(A) \otimes P(B)\bigr) \ar{d} \ar{r}{\sim} & K(P(A \otimes B)) \ar{d} \\
L(P(A)) \otimes L(P(B)) \ar{r}[swap]{\sim} & L\bigl(P(A) \otimes P(B)\bigr) \ar{r}[swap]{\sim} & L(P(A \otimes B))
\end{tikzcd}\]
commutes for all $A,B \in \A$. The outer rectangle commutes because $K \circ P \to L \circ P$ is a morphism of tensor functors by assumption. The right triangle commutes since $K \to L$ is natural, so we are done.

Finally, let $H : \A \to \C$ be a tensor functor with $H  \circ \alpha = H  \circ \beta$. Then we actually have $H  \circ \overline{\alpha} = H \circ \overline{\beta}$, which follows from the following commutative diagram for $\overline{\alpha}$, likewise for $\overline{\beta}$.
\[\begin{tikzcd}[row sep = 6ex]
H(A) \otimes H(F(i)) \ar{d}[swap]{H(A) \otimes H(\alpha(i))} \ar{r}{\sim} & H(A \otimes F(i)) \ar{d}{H(\overline{\alpha}(A,i))} \\
H(A) \otimes H(G(i)) \ar{r}[swap]{\sim} & H(B \otimes G(i))
\end{tikzcd}\]
Hence, there is a functor $L : \B \to \C$ with an isomorphism of functors $L \circ P \myiso H$. We have to endow $L$ with the structure of a tensor functor. In fact, we have an isomorphism $1_\C \myiso H(1_\A) \myiso L(P(1_\A)) \myiso L(1_\B)$, and for $A,B \in \A$ we have natural isomorphisms
\[
L(P(A)) \otimes L(P(B)) \myiso  H(A) \otimes H(B) \myiso  H(A \otimes B) \myiso  L(P(A \otimes B)) \myiso  L(P(A) \otimes P(B)),
\]
which thus induce natural isomorphisms $L(U) \otimes L(V) \myiso L(U \otimes V)$ for $U,V \in \B$. The coherence diagrams in the definition of a tensor functor follow for $L$ immediately from those for $H$. By construction $L \circ P \myiso H$ is actually an isomorphism of tensor functors.
\end{proof}

\begin{cor} \label{tensorcoequifiers}
The $2$-category $\cat_{\otimes}$ has bicategorical coequifiers.
\end{cor}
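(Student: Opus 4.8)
The plan is to read this off directly from \cref{coequifish}, whose construction was set up precisely for this purpose. A coequifier problem in $\cat_{\otimes}$ consists of small tensor categories $\mathcal{S}, \A$, tensor functors $F, G : \mathcal{S} \rightrightarrows \A$, and morphisms of tensor functors $\alpha, \beta : F \rightrightarrows G$. First I would forget all tensor structure on the indexing side: view $\mathcal{S}$ merely as a small category $\I$, view $F, G$ as plain functors $\I \rightrightarrows \A$, and view $\alpha, \beta$ as plain morphisms of functors. This loses nothing, since \cref{coequifish} imposes no condition on $\I, F, G, \alpha, \beta$ beyond what survives the forgetting; only the target $\A$ is required to be a tensor category.

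Applying \cref{coequifish} to this data produces a small tensor category $\B$ together with a tensor functor $P : \A \to \B$ satisfying $P \circ \alpha = P \circ \beta$, and with the universal property that for every small tensor category $\C$ the assignment $K \mapsto K \circ P$ is an equivalence from $\Hom_{\otimes}(\B, \C)$ onto the full subcategory of $\Hom_{\otimes}(\A, \C)$ spanned by those tensor functors $H$ with $H \circ \alpha = H \circ \beta$. I would then simply observe that this is exactly the universal property of a bicategorical coequifier of $\alpha, \beta$ in $\cat_{\otimes}$. The one bookkeeping point to note is that the equation $P \circ \alpha = P \circ \beta$, delivered by \cref{coequifish} as an equality of plain morphisms of functors, automatically upgrades to an equality of morphisms of \emph{tensor} functors $P \circ F \to P \circ G$: since $P, F, G$ are tensor functors and $\alpha, \beta$ are morphisms of tensor functors, both $P \circ \alpha$ and $P \circ \beta$ are morphisms of tensor functors, and two such agreeing on underlying components coincide. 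Thus $P$ genuinely coequifies $\alpha, \beta$ in $\cat_{\otimes}$.

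The same remark also identifies the two universal properties on the nose: for a tensor functor $H : \A \to \C$, the condition $H \circ \alpha = H \circ \beta$ read in $\cat_{\otimes}$ (equality of morphisms of tensor functors) is equivalent to the same condition read in $\cat$ (equality of morphisms of functors), because the forgetful functor on $2$-cells is injective on underlying natural transformations. Hence the full subcategory of $\Hom_{\otimes}(\A, \C)$ classified by \cref{coequifish} is precisely the category occurring in the definition of the bicategorical coequifier, so $P : \A \to \B$ is the desired coequifier. I expect no real obstacle here: all the substantive work lives in \cref{coequifish}, and what remains is only the identification of plain versus tensor $2$-cells and the observation that the symmetric monoidal structure on $\mathcal{S}$ and on $F, G, \alpha, \beta$ is never used in the construction.
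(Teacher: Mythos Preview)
Your proposal is correct and follows exactly the paper's approach: apply \cref{coequifish} after forgetting the tensor structure on the source, and observe that the condition $H \circ \alpha = H \circ \beta$ reads the same in $\cat_{\otimes}$ as in $\cat$ because two morphisms of tensor functors are equal if and only if they are equal as morphisms of functors. The paper's proof is simply the one-line version of what you wrote.
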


\begin{proof}
This follows immediately from \cref{coequifish}, since two morphisms of tensor functors are equal if and only if they are equal as morphisms of functors.
\end{proof}

\begin{prop} \label{tensorcoins}
The $2$-category $\cat_{\otimes}$ has bicategorical coinserters.
\end{prop}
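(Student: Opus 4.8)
The plan is to carry out the two steps anticipated in \cref{twosteps}: first produce the inserted $2$-cell as a morphism of plain functors via \cref{coinsish}, and then force it to become a morphism of tensor functors by means of coequifiers. So let $F,G : \A \rightrightarrows \B$ be two tensor functors between small tensor categories. First I would apply \cref{coinsish} with the tensor category $\B$, the index category $\A$ (its underlying category), and the underlying functors $F,G$. This produces a small tensor category $\B_1$, a tensor functor $P_1 : \B \to \B_1$ and a morphism of functors $\delta' : P_1 \circ F \to P_1 \circ G$ that is universal in the sense that, for every small tensor category $\D$, tensor functors $\B_1 \to \D$ correspond to pairs $(H,\gamma)$ consisting of a tensor functor $H : \B \to \D$ together with a morphism of (merely) functors $\gamma : H \circ F \to H \circ G$. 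As \cref{twosteps} warns, $\delta'$ need not be monoidal, and repairing this is exactly the job of the second step.

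The second step is to express the two axioms for $\gamma$ to be a morphism of tensor functors as coequifier data. For the multiplicativity axiom I would use that the coproduct of $\A$ with itself in $\cat_\otimes$ is the product tensor category $\A \times \A$ (\cref{coprodtensor}) and that $\otimes_\A : \A \times \A \to \A$ is a tensor functor for the product tensor structure, its structure isomorphism being the interchange built from the symmetry of $\A$. This yields two tensor functors $S,T : \A \times \A \to \B_1$, where $S$ is the one classified by the pair $(P_1 F, P_1 F)$, so $S(X,Y) = P_1 F(X) \otimes P_1 F(Y)$, and $T = P_1 G \circ \otimes_\A$, so $T(X,Y) = P_1 G(X \otimes Y)$. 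Writing $\mu^F,\mu^G$ for the tensor structures of $P_1 F, P_1 G$, the two sides of the multiplicativity square are the natural transformations $m_1 = \mu^G \circ (\delta' \otimes \delta')$ and $m_2 = \delta' \circ \mu^F$, both $S \to T$. The unit axiom I would encode over the terminal category as the pair of morphisms $1_{\B_1} \to P_1 G(1_\A)$ given by the unit structure $\phi^G$ of $P_1 G$ and by $\delta'_{1_\A} \circ \phi^F$.

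Next I would form the coequifier of $m_1,m_2$ together with the unit pair --- for instance by applying \cref{coequifish} twice in succession, or once as a multiple coequifier using the coproducts of \cref{allcoprodtensor} --- obtaining a tensor functor $Q : \B_1 \to \C$. I claim that $(Q \circ P_1,\, Q \circ \delta')$ is the desired bicategorical coinserter of $F,G$ in $\cat_\otimes$: by construction $Q \circ \delta'$ now satisfies both axioms, hence is a morphism of tensor functors, and composing the universal property of $Q$ with that of $P_1$ shows that tensor functors $\C \to \D$ correspond to pairs $(H,\gamma)$ in which $\gamma$ is a morphism of \emph{tensor} functors, which is precisely the defining data of the bicategorical coinserter. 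Note that I must use \cref{coequifish} rather than \cref{tensorcoequifiers} here, since $m_1,m_2$ are morphisms of plain functors and need not themselves be monoidal.

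The main obstacle I expect is the bookkeeping in this last identification: verifying that coequifying $m_1,m_2$ (and the unit pair) matches, under the correspondence of \cref{coinsish}, exactly the multiplicativity (and unit) axiom for the induced $\gamma = H \circ \delta'$. Unwinding this requires a diagram chase inserting the coherence isomorphisms of the tensor functor $H$ against the structures $\mu^F,\mu^G$ and the identification $S(X,Y) = P_1 F(X) \otimes P_1 F(Y)$; it is routine but needs care, and it is the only place where the tensor structure of $Q$ and of the product $\A \times \A$ interact nontrivially.
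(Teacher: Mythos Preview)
Your proposal is correct and follows essentially the same two-step strategy as the paper: first apply \cref{coinsish} to obtain $(P_1,\delta')$, then coequify the two pairs encoding multiplicativity and unitality via \cref{coequifish}. The paper writes the multiplicativity data directly as morphisms of functors $\A^2 \to \B_1$ without invoking the tensor structure on $\A\times\A$ or the tensor functor $\otimes_\A$; your packaging via the coproduct in $\cat_\otimes$ is a harmless conceptual gloss, and you rightly note that since $m_1,m_2$ need not be monoidal it is \cref{coequifish} and not \cref{tensorcoequifiers} that applies.
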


\begin{proof}
Let $F,G : \A \rightrightarrows \B$ be two tensor functors between small tensor categories. By \cref{coinsish} there is a bicategorically universal tensor functor $P : \B \to \C$ with a morphism of functors $\delta' : P \circ F \to P \circ G$. It is not necessarily a morphism of tensor functors. In order to fix this, consider the parallel pair of morphisms of functors
\[\begin{tikzcd}
\alpha,\beta : P\bigl(F(-)\bigr) \otimes P\bigl(F(-)\bigr) \ar[shift left=0.6ex]{r} \ar[shift right=0.6ex]{r} &  P\bigl(G(- \otimes -)\bigr) : \A^2 \to \C
\end{tikzcd}\]
given by
\[\begin{tikzcd}
\alpha : P\bigl(F(-)\bigr) \otimes P\bigl(F(-)\bigr) \ar{r}{\sim} & P\bigl(F(- \otimes -)\bigr) \ar{r}{\delta'} & P\bigl(G(- \otimes -)\bigr)
\end{tikzcd}\]
and
\[\begin{tikzcd}
\beta : P\bigl(F(-)\bigr) \otimes P\bigl(F(-)\bigr) \ar{r}{\delta' \otimes \delta'} &  P\bigl(G(-)\bigr) \otimes P\bigl(G(-)\bigr) \ar{r}{\sim} &  P\bigl(G(- \otimes -)\bigr).
\end{tikzcd}\]
Similarly, consider the parallel pair of morphisms of functors
\[\begin{tikzcd}
\alpha',\beta' : 1_\C \ar[shift left=0.6ex]{r} \ar[shift right=0.6ex]{r} & P(G(1_\A)) : \A^0 \to \C
\end{tikzcd}\]
given by $\alpha' : 1_\C \myiso P(F(1_\A)) \xrightarrow{\delta'} P(G(1_\A))$ and $\beta' : 1_\C \myiso P(G(1_\A))$. We apply \cref{coequifish} twice to obtain a bicategorically universal tensor functor $Q : \C \to \D$ with $Q \circ \alpha = Q \circ \beta$ and $Q \circ \alpha' = Q \circ \beta'$. Then $R \coloneqq Q \circ P : \B \to \D$ is a bicategorically universal tensor functor with a morphism of \emph{tensor functors} $R \circ F \to R \circ G$, namely $Q \circ \delta'$.
\end{proof}

\begin{cor} \label{normaltensorpush}
The $2$-category $\cat_{\otimes}$ has bicategorical pushouts.
\end{cor}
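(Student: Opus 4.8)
The plan is to obtain the pushouts purely formally from the reduction lemmas of \cref{sec:prelim}, since all the genuine work has already been carried out in the preceding results on $\cat_{\otimes}$. The guiding observation is that \cref{pushviacoeq} reduces the existence of bicategorical pushouts to two ingredients: binary bicategorical coproducts and bicategorical coequalizers. The former are already available by \cref{coprodtensor}, so the only thing I need to supply is bicategorical coequalizers in $\cat_{\otimes}$.

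To produce these, I would first invoke \cref{tensorcoins} and \cref{tensorcoequifiers}, which tell us that $\cat_{\otimes}$ has bicategorical coinserters and bicategorical coequifiers. Then \cref{coeqviains}, whose proof constructs a coequalizer as a coinserter followed by a coinverter (itself built from a coinserter and coequifiers), immediately yields bicategorical coequalizers in $\cat_{\otimes}$. At this point both hypotheses of \cref{pushviacoeq} are in place, and applying it gives the desired bicategorical pushouts; concretely, the pushout of $F : \C \to \A$ and $G : \C \to \B$ is the bicategorical coequalizer of $\iota_\A \circ F$ and $\iota_\B \circ G$ into the coproduct $\A \times \B$ (or $\A \otimes_\IK \B$ in the linear case).

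I do not expect any real obstacle here. All the delicate coherence bookkeeping — transporting the tensor structure and verifying the associativity, unit, and symmetry constraints across the universal constructions — was already absorbed into \cref{coinsish}, \cref{coequifish}, and their corollaries. Consequently the present statement is a formal corollary, and the only point requiring attention is to confirm that the chain of hypotheses matches up: that the coinserters and coequifiers furnished for $\cat_{\otimes}$ are genuinely the \emph{bicategorical} ones demanded by the reduction lemmas, which they are by construction.
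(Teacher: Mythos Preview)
Your argument is correct and essentially identical to the paper's: the paper invokes \cref{tensorcoequifiers}, \cref{tensorcoins}, \cref{coprodtensor} and \cref{pushviacoprodcoeqcoins}, the last of which is precisely the packaged combination of \cref{pushviacoeq} and \cref{coeqviains} that you spell out. The paper additionally unwinds the construction to describe the pushout concretely (coproduct $\A \times \B$, then a bicategorical coequalizer in $\cat$, then a coequifier to enforce tensor compatibility), but the logical content is the same.
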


\begin{proof}
This follows from \cref{tensorcoequifiers}, \cref{tensorcoins}, \cref{coprodtensor} and \cref{pushviacoprodcoeqcoins}. Let us briefly spell out how bicategorical pushouts actually look like according to our proofs. Given tensor functors $F : \C \to \A$, $G : \C \to \B$, we form the bicategorical coproduct of $A,B$ in $\cat_{\otimes}$, which is just the product $\A \times \B$. Then we form the bicategorical coequalizer $ P : \A \times \B \to \D$ in $\cat$ of the parallel pair of functors $\A \times \B \times \C \rightrightarrows \A \times \B$ defined by $(A,B,C) \mapsto (A \otimes F(C),B)$ and $(A,B,C) \mapsto (A,B \otimes G(C))$. Actually $P$ carries a tensor structure, but the natural isomorphisms $P(A \otimes F(C),B) \myiso P(A , B \otimes G(C))$ need not be compatible with the tensor structure. A suitable multiple bicategorical coequifier $Q : \D \to \E$ in $\cat$ rectifies this, which actually carries a tensor structure, and $\E$ is the desired bicategorical pushout in $\cat_{\otimes}$.
\end{proof}

\begin{rem} \label{codescent}
Let us say a few words on a different construction of bicategorical pushouts in $\cat_{\otimes}$ (which works similarly for the other variants), which was indicated by Schäppi in \cite[Section 4]{Sch18} and involves more category theoretic machinery, even though there are obvious similarities to our construction above. Given tensor functors $\A \leftarrow \C \rightarrow \B$, one constructs \emph{reflexive coherence data} \cite{Lac02} (a truncated simplicial object in which the simplicial identities only hold up to specified isomorphisms which are coherent with each other)
\[\begin{tikzcd}[row sep=6ex, column sep = 8ex]
\A \times \C \times \C \times \B \ar[shift left=2ex]{r} \ar{r} \ar[shift right=2ex]{r} &
\A \times \C \times \B \ar[shift left=1ex]{r} \ar[shift right=1ex]{r} \ar[shift left=1ex]{l} \ar[shift right=1ex]{l} & 
\A \times \B. \ar{l}
\end{tikzcd}\]
They have a \emph{bicategorical coherence object} in $\cat$ (a special bicategorical weighted colimit), which can be constructed from bicategorical coinserters and bicategorical coequifiers. Using a ``diagonal lemma'' for reflexive coherence objects (a categorification of the ``diagonal lemma'' for reflexive coequalizers), one can endow that codescent object with the structure of a tensor category. One then has to prove that it is actually a coherence object in $\cat_{\otimes}$. Now one uses that a coherence object of the above data in $\cat_{\otimes}$ is just a bicategorical pushout of $\A \leftarrow \C \rightarrow \B$ by \cite[Proposition 4.3]{Sch18}. 
\end{rem}

\begin{rem} \label{coinvex}
It follows from \cref{tensorcoins} and \cref{tensorcoequifiers} via \cref{coeqviains} that $\cat_{\otimes}$ has bicategorical coinverters. But their construction can be done in one step, using bicategorical coinverters in $\cat$. Namely, if $\alpha : F \to G : \A \rightrightarrows \B$ is a morphism of tensor functors, its bicategorical coinverter can be constructed from the bicategorical coinverter of $\overline{\alpha} : \overline{F} \to \overline{G} : \B \times \A \rightrightarrows \B$ in $\cat$. The proof is similar to that of \cref{coequifish}.
\end{rem}

Now let us generalize our arguments to the other types of tensor categories.

\begin{prop} \label{coinscoeqtensor}
The $2$-categories $\cat_{\otimes}$, $\cat_{\otimes/\IK}$, $\cat_{\fc\!\otimes}$, $\cat_{\fc\!\otimes/\IK}$ have bicategorical coinserters and coequifiers.
\end{prop}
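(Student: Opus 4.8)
The plan is to observe that the entire construction for $\cat_{\otimes}$ carried out in \cref{coinsish}, \cref{coequifish}, \cref{tensorcoins} and \cref{tensorcoequifiers} uses only two structural features of the base $2$-category $\cat$ together with its cartesian product: that $\cat$ has bicategorical coinserters and coequifiers (\cref{coins}, \cref{coequifiers}), and that $\times \colon \cat^2 \to \cat$ preserves bicategorical coinserters and coequifiers in each variable (\cref{preservecolim}). Every tensor-structural step in those proofs --- the construction of $\otimes_{\B}$, of the unitors, the associator and the symmetry, and the verification that each is compatible with $\delta$ through the large coherence diagrams --- appeals to nothing more than these two facts, the coherence isomorphisms of $\A$, and the universal property of $P$. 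So first I would establish the analogues of \cref{coins}, \cref{coequifiers} and \cref{preservecolim} for the three base $2$-categories $\cat_{\IK}$, $\cat_{\fc}$, $\cat_{\fc\!/\IK}$ equipped with their tensor products $\otimes_{\IK}$, $\boxtimes$, $\boxtimes_{\IK}$, and then transcribe the arguments of \cref{coinsish}--\cref{tensorcoins} verbatim, replacing $\cat$ by the relevant base and $\times$ by the relevant tensor product, with the input data $\I$ and $F,G$ now taken in the base (e.g.\ finitely cocontinuous and/or $\IK$-linear).

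The coinserters and coequifiers of the three base $2$-categories are already furnished by \cref{coins} and \cref{coequifiers}, so the only genuinely new ingredient is the analogue of \cref{preservecolim}. Here I would note that each tensor product in question is \emph{closed}: there are natural equivalences $\Hom_{\fc}(\A \boxtimes \B,\C) \simeq \Hom_{\fc}\bigl(\A,\Hom_{\fc}(\B,\C)\bigr)$, and likewise $\Hom_{\IK}(\A \otimes_{\IK}\B,\C) \simeq \Hom_{\IK}\bigl(\A,\Hom_{\IK}(\B,\C)\bigr)$ and the corresponding statement for $\boxtimes_{\IK}$, where the internal hom is in each case the appropriate functor category. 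This category again lies in the base $2$-category, since finite colimits in a functor category are computed pointwise and the finitely cocontinuous (resp.\ $\IK$-linear) functors are closed under them. Therefore $\A \boxtimes -$, and symmetrically $- \boxtimes \A$, is a left biadjoint and hence preserves all bicategorical colimits, in particular bicategorical coinserters and coequifiers in each variable; the same holds for $\otimes_{\IK}$ and $\boxtimes_{\IK}$. This is exactly the property used in \cref{preservecolim}, now available in every base.

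With these two ingredients the constructions go through unchanged. For coequifiers I would repeat \cref{coequifish}: given $\alpha,\beta \colon F \rightrightarrows G$ with $F,G \colon \I \rightrightarrows \A$, form the stabilized transformations $\overline{\alpha},\overline{\beta}$ on the tensor product $\A \boxtimes \I$ (resp.\ the $\IK$-linear or plain variant), take their coequifier $P \colon \A \to \B$ in the base, and transport the tensor structure along $P$ using the preservation property; the coherence diagrams descend from $\A$ because two $2$-cells between functors out of a power of $\B$ agree as soon as they agree after precomposition with the corresponding power of $P$. For coinserters I would first run the analogue of \cref{coinsish} to produce a universal tensor functor $P$ equipped with a bare natural transformation $\delta' \colon P \circ F \to P \circ G$, and then, exactly as in \cref{tensorcoins}, apply the generalized coequifier result twice to rectify $\delta'$ into a morphism of tensor functors. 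The bicategorical coequifier half of the statement then follows as in \cref{tensorcoequifiers}, since two $2$-cells of tensor functors coincide precisely when they coincide as $2$-cells of the underlying functors.

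I expect the main obstacle to be bookkeeping rather than conceptual: checking the closedness of $\otimes_{\IK}$, $\boxtimes$, $\boxtimes_{\IK}$, and thus the analogue of \cref{preservecolim}, and then confirming that every coherence diagram in \cref{coinsish} and \cref{coequifish} still typechecks once $\times$ is replaced by the new tensor product. The one point genuinely deserving care is the symmetry: as \cref{monoidalcoinsish} warns, the analogous construction breaks down for merely monoidal categories because the maps $\sigma'$ need not be compatible with $\delta$. Since all four of our $2$-categories consist of \emph{symmetric} monoidal categories, however, the symmetry of $\A$ feeds into the relevant hexagon diagrams exactly as in the $\cat_{\otimes}$ case, so no new difficulty arises and the $\IK$-linear and finitely cocomplete variants are handled uniformly.
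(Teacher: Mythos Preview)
Your proposal is correct and follows essentially the same approach as the paper: establish the analogue of \cref{preservecolim} for $\boxtimes$, $\otimes_{\IK}$, $\boxtimes_{\IK}$ via closedness (biadjointness), then rerun \cref{coinsish}, \cref{coequifish}, \cref{tensorcoequifiers} and \cref{tensorcoins} in each base $2$-category with $\times$ replaced by the appropriate tensor product and the auxiliary functors $\overline{F},\overline{G}$ defined on $\A \boxtimes \I$ via its universal property. The paper additionally points out that in the finitely cocomplete case it suffices to verify all natural transformations and coherence diagrams on elementary tensors $A \boxtimes i$, and that in the \cref{tensorcoins} step $\A^2$ and $\A^0$ are to be replaced by $\A \boxtimes \A$ and the unit object for $\boxtimes$ (namely $\FinSet$, resp.\ $\Mod_{\fp}(\IK)$), but these are exactly the bookkeeping details you anticipated.
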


\begin{proof}
We just treat $\cat_{\fc\!\otimes}$, the $\IK$-linear variants are similar. First, \cref{preservecolim} generalizes to the functor $\boxtimes : \cat_{\fc}^2 \to \cat_{\fc}$ since $\A \boxtimes -$ is bicategorically left adjoint to $\HOM_{\fc}(\A,-)$.

Next, \cref{coinsish} generalizes to finitely cocontinuous functors $F,G : \I \rightrightarrows \A$ from a small finitely cocomplete category $\I$ into a small finitely cocomplete tensor category $\A$. In the proof, we define the finitely cocontinuous functor $\overline{F} : \A \boxtimes \I \to \A$ using the universal property of $\boxtimes$ by $A \boxtimes i \mapsto A \otimes F(i)$, similarly $\overline{G} : \A \boxtimes \I \to \A$. Then we define $(P : \A \to \B, \, \delta : P \circ \overline{F} \to P \circ \overline{G})$ as a bicategorical coinserter in $\cat_{\fc}$ (which exists by \cref{coins}). By the universal property of  $\boxtimes$, the morphism $\delta$ is completely determined by its behavior on elementary tensors, i.e.\ by natural morphims $P(A \otimes F(i)) \to P(A \otimes G(i))$. The finitely cocontinuous functor $\A \boxtimes \A \to \B$, $A \boxtimes B \mapsto P(A \otimes B)$ coinserts the pairs $\overline{F} \boxtimes \A,\, \overline{G} \boxtimes \A$ and $\A \boxtimes \overline{F},\, \A \boxtimes \overline{G}$ using morphisms which can be defined in the same way as in the previous proof; notice that because of the universal property of $\boxtimes$ it suffices to construct them on elementary tensors. Thus by \cref{preservecolim} there is a functor $\B \boxtimes \B \to \B$, which thus corresponds to a functor $\otimes : \B^2 \to \B$ which is finitely cocontinuous in each variable, together with natural isomorphisms $\mu : P(A) \otimes P(B) \myiso P(A \otimes B)$ which satisfy the same diagrams (\ref{diag1}) and (\ref{diag2}). The rest of the proof can now easily be copied.

Let us remark that the remarks after that proposition hold in a very similar fashion for finitely cocomplete (symmetric) (monoidal) categories.
 
Next, \cref{coequifish} holds in the same way for morphisms $\alpha,\beta : F \rightrightarrows G$ of finitely cocontinuous functors $F,G : \I \rightrightarrows \A$ from a small finitely cocomplete category $\I$ into a small finitely cocomplete tensor category $\A$. One defines auxiliary morphims $\overline{\alpha},\overline{\beta} : \overline{F} \rightrightarrows \overline{G}$ and takes their coequifier $P : \A \to \B$ (which exists by \cref{coequifiers}). The finitely cocontinuous functor $\A \boxtimes \A \to \B$, $A \boxtimes B \mapsto P(A \otimes B)$ coequifies $\overline{\alpha},\overline{\beta}$ in each variable: It suffices to check this on elementary tensors, so we may just repeat the proof. Hence, we obtain a finitely cocontinuous functor $\B \boxtimes \B \to \B$ etc.\

As in \cref{tensorcoequifiers} it follows immediately from the previous result that $\cat_{\fc\!\otimes}$ has bicategorical coequifiers.

Finally, as in the proof of \cref{tensorcoins} one can use the previous results to show the existence of bicategorical coinserters in $\cat_{\fc\!\otimes}$. In the proof we replace $\A^2$ by $\A \boxtimes \A$ and $\A^0$ by the unit for $\boxtimes$, i.e.\ $\FinSet$ (resp.\ $\Mod_{\fp}(\IK)$ for in the $\IK$-linear case).
\end{proof}

\begin{rem} \label{unify2}
We conjecture that more generally for a symmetric monoidal bicategory $(\C,\boxtimes)$ such that $\C$ has bicategorical coinserters and coequifiers which are preserved by $\boxtimes$ in each variable, its bicategory $\SymPsMon(\C,\boxtimes)$ of symmetric pseudomonoids has bicategorical coinserters and coequifiers as well. As in \cref{unify1}, this general result is much more advanced and not strictly necessary for our four types of tensor categories, where, as we have seen, more direct arguments are also available. We can also recast \cref{twosteps} here: If $\C$ just has bicategorical coinserters which are preserved by $\boxtimes$ in each variable, this is usually not enough to show the existence of bicategorical coinserters in $\SymPsMon(\C,\boxtimes)$.
\end{rem}

\begin{cor} \label{tensorcocomplete}
The $2$-categories $\cat_{\otimes}$, $\cat_{\otimes/\IK}$, $\cat_{\fc\!\otimes}$, $\cat_{\fc\!\otimes/\IK}$ have bicategorical pushouts. In fact, they are bicategorically cocomplete.
\end{cor}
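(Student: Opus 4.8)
The plan is to assemble the corollary purely from the structural results already established, since all the substantive work has been done in the preceding propositions. I would organize the argument in two stages, corresponding to the two assertions in the statement.

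First I would dispatch the existence of bicategorical pushouts. By \cref{coprodtensor}, each of the four $2$-categories $\cat_{\otimes}$, $\cat_{\otimes/\IK}$, $\cat_{\fc\!\otimes}$, $\cat_{\fc\!\otimes/\IK}$ has binary bicategorical coproducts, and by \cref{coinscoeqtensor} each has bicategorical coinserters and bicategorical coequifiers. These are exactly the three ingredients required by \cref{pushviacoprodcoeqcoins} (which combines \cref{pushviacoeq} and \cref{coeqviains}), so bicategorical pushouts exist in all four cases. This is the part most relevant to the application in \cite{Bra20}.

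Next I would upgrade to full bicategorical cocompleteness. The key point here is that \cref{allcolim} requires \emph{arbitrary} bicategorical coproducts, not merely binary ones; these are supplied by \cref{allcoprodtensor}. Combined again with the coinserters and coequifiers from \cref{coinscoeqtensor}, \cref{allcolim} then yields that each of the four $2$-categories is bicategorically cocomplete. Since bicategorical pushouts are themselves a special type of bicategorical weighted colimit, the first assertion is in fact subsumed by the second; I would nevertheless keep them separate in the writeup, both because pushouts are the construction of primary interest and because their existence requires strictly less (only binary coproducts).

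There is no genuine obstacle remaining at this stage: the corollary is a bookkeeping consequence of the general bicategorical machinery of \cref{sec:prelim} applied to the concrete constructions of \cref{coprodtensor}, \cref{allcoprodtensor} and \cref{coinscoeqtensor}. The only point demanding a moment of care is to cite \cref{allcoprodtensor} rather than \cref{coprodtensor} for the cocompleteness claim, so that arbitrary-arity coproducts are available to feed into \cref{allcolim}.

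\begin{proof}
By \cref{coprodtensor} the four $2$-categories have binary bicategorical coproducts, and by \cref{coinscoeqtensor} they have bicategorical coinserters and coequifiers. Hence \cref{pushviacoprodcoeqcoins} applies and shows that they have bicategorical pushouts. For the stronger statement, \cref{allcoprodtensor} provides arbitrary bicategorical coproducts, so \cref{allcolim} (together with \cref{coinscoeqtensor}) shows that the four $2$-categories are bicategorically cocomplete.
\end{proof}
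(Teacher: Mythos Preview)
Your proof is correct and matches the paper's argument essentially verbatim: the paper also derives pushouts from \cref{coprodtensor} and \cref{coinscoeqtensor} via \cref{pushviacoprodcoeqcoins}, and full cocompleteness from \cref{allcoprodtensor} and \cref{coinscoeqtensor} via \cref{allcolim}. Your additional remark distinguishing binary from arbitrary coproducts is a nice clarification but not a deviation.
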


\begin{proof}
The first statement follows from \cref{coprodtensor} and \cref{coinscoeqtensor} using \cref{pushviacoprodcoeqcoins}, the second from \cref{allcoprodtensor} and \cref{coinscoeqtensor} using \cref{allcolim}.
\end{proof}

\section{Locally finitely presentable categories}

We now apply the results from the previous section to the $2$-categories $\LFP$ and $\LFP_{\otimes}$ of locally finitely presentable (tensor) categories together with cocontinuous tensor functors preserving finitely presentable objects; everything here holds in a similar way for the $\IK$-linear variants $\LFP_{\IK}$ and $\LFP_{\otimes/\IK}$. We first record the special case which will be used in \cite{Bra20}.

\begin{prop} \label{pushy}
Given two morphisms $\A \leftarrow \C \rightarrow \B$ in $\LFP_{\otimes/\IK}$, their bicategorical pushout exists in $\LFP_{\otimes/\IK}$. It is even the bicategorical pushout in $\Cat_{\c\otimes/\IK}$.
\end{prop}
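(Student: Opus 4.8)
The plan is to reduce to finitely presentable objects and then apply $\Ind$, exactly as in the proof of \cref{coprodlfp}. Since every morphism of $\LFP_{\otimes/\IK}$ is cocontinuous and preserves finitely presentable objects, the two given morphisms restrict to morphisms $\A_{\fp} \leftarrow \C_{\fp} \rightarrow \B_{\fp}$ in $\cat_{\fc\!\otimes/\IK}$: the restricted functors land in the fp-subcategories, are finitely cocontinuous (finite colimits of finitely presentable objects are finitely presentable and the original functors preserve all colimits), and are tensor functors (finitely presentable objects are closed under finite tensor products). By \cref{tensorcocomplete} the $2$-category $\cat_{\fc\!\otimes/\IK}$ has bicategorical pushouts, so I would form $\mathcal{P} \coloneqq \A_{\fp} \sqcup_{\C_{\fp}} \B_{\fp}$ there and take $\Ind(\mathcal{P})$ as the candidate. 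As in \cref{coprodlfp}, this is a locally finitely presentable tensor category with $\Ind(\mathcal{P})_{\fp} = \mathcal{P}$, and its structure morphisms are obtained by applying $\Ind$ to the coprojections $\A_{\fp} \to \mathcal{P} \leftarrow \B_{\fp}$; they are cocontinuous and preserve finitely presentable objects by construction.

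First I would establish the stronger claim, the universal property in $\Cat_{\c\otimes/\IK}$. For an arbitrary cocomplete $\IK$-linear tensor category $\D$ I would assemble the chain
\begin{align*}
\Hom_{\c\otimes/\IK}(\Ind(\mathcal{P}), \D)
&\simeq \Hom_{\fc\!\otimes/\IK}(\mathcal{P}, \D) \\
&\simeq \Hom_{\fc\!\otimes/\IK}(\A_{\fp}, \D) \times_{\Hom_{\fc\!\otimes/\IK}(\C_{\fp}, \D)} \Hom_{\fc\!\otimes/\IK}(\B_{\fp}, \D) \\
&\simeq \Hom_{\c\otimes/\IK}(\A, \D) \times_{\Hom_{\c\otimes/\IK}(\C, \D)} \Hom_{\c\otimes/\IK}(\B, \D),
\end{align*}
where the outer equivalences are the universal property of the $\Ind$-completion (cocontinuous tensor functors out of $\Ind(-)$ into a cocomplete target correspond to finitely cocontinuous tensor functors out of $(-)$), applied to $\mathcal{P}$ and to $\A, \B, \C$, and the middle equivalence is the pushout property of $\mathcal{P}$. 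Since the $\Ind$-equivalences are pseudonatural in the small category and the restriction functors on both sides are induced by $\C_{\fp} \to \A_{\fp}$ and $\C_{\fp} \to \B_{\fp}$, the two bicategorical pullbacks correspond.

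The hard part will be the middle equivalence, because \cref{tensorcocomplete} only yields the pushout property of $\mathcal{P}$ against \emph{small} target tensor categories, whereas here $\D$ may be large. To bridge this I would run the small reduction argument of \cref{smallred}: any finitely cocontinuous $\IK$-linear tensor functor from a small category into $\D$ factors through a small full subcategory $\D' \subseteq \D$ closed under finite colimits and finite tensor products, so that $\Hom_{\fc\!\otimes/\IK}(-, \D) \simeq \varinjlim_{\D'} \Hom_{\fc\!\otimes/\IK}(-, \D')$ for each of $\mathcal{P}, \A_{\fp}, \B_{\fp}, \C_{\fp}$. The pushout property holds at every small $\D'$, and because filtered colimits commute with the finite bilimit defining the bicategorical pullback in $\cat$, the property passes to the filtered colimit and hence to $\D$. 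This commutation is the one genuinely technical point.

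Finally I would descend to $\LFP_{\otimes/\IK}$ by restricting the universal property just proved. A cocontinuous tensor functor $\Ind(\mathcal{P}) \to \D$ preserves finitely presentable objects if and only if its restrictions along $\A \to \Ind(\mathcal{P})$ and $\B \to \Ind(\mathcal{P})$ do, because $\mathcal{P} = \Ind(\mathcal{P})_{\fp}$ is generated under finite colimits and finite tensor products by the images of $\A_{\fp}$ and $\B_{\fp}$. Restricting the displayed equivalence to locally finitely presentable $\D$ and to fp-preserving morphisms therefore yields the bicategorical pushout property in $\LFP_{\otimes/\IK}$, while the unrestricted statement shows that the inclusion $\LFP_{\otimes/\IK} \hookrightarrow \Cat_{\c\otimes/\IK}$ preserves this pushout.
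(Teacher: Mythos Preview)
Your proposal is correct and follows essentially the same route as the paper: restrict to $\A_{\fp} \leftarrow \C_{\fp} \rightarrow \B_{\fp}$, form the bicategorical pushout in $\cat_{\fc\!\otimes/\IK}$ via \cref{tensorcocomplete}, apply $\Ind$, and verify the universal property in $\Cat_{\c\otimes/\IK}$ by the \cref{smallred} reduction together with the fact that filtered colimits commute with the bicategorical pullback of categories. Your final paragraph, arguing that fp-preservation of a functor out of $\Ind(\mathcal{P})$ is detected on the images of $\A_{\fp}$ and $\B_{\fp}$, is a point the paper leaves implicit; it is a welcome clarification and correct as stated.
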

 
\begin{proof}
We use the same method as in the proof of \cref{coprodlfp}. The given morphisms correspond to two morphisms $\A_{\fp} \leftarrow \C_{\fp} \rightarrow \B_{\fp}$ in $\cat_{\fc\!\otimes/\IK}$, which have a bicategorical pushout $\A_{\fp} \boxtimes_{\C_{\fp}} \B_{\fp}$ by \cref{tensorcocomplete}. We claim that $\smash{\A \coboxtimes_\C \B \coloneqq \Ind(\A_{\fp} \boxtimes_{\C_{\fp}} \B_{\fp})}$ is the bicategorical pushout in the $2$-category of all cocomplete $\IK$-linear tensor categories $\Cat_{\c\otimes/\IK}$. Using \cref{smallred}, for $\D \in \Cat_{\c\otimes/\IK}$ we calculate  (where $\D'$ runs through all small full subcategories of $\D$ which are closed under finite colimits and finite tensor products)
\begin{align*}
& ~~~\,\, \smash{\Hom_{\c\otimes/\IK}\bigl(\A \coboxtimes_\C \B,\D\bigr)} \\
& \simeq \Hom_{\fc\!\otimes/\IK}(\A_{\fp} \boxtimes_{\C_{\fp}} \B_{\fp},\D) \\
& \simeq {\varinjlim}_{\D'} \Hom_{\fc\!\otimes/\IK}(\A_{\fp} \boxtimes_{\C_{\fp}} \B_{\fp},\D') \\
& \simeq {\varinjlim}_{\D'} \bigl(\Hom_{\fc\!\otimes/\IK}(\A_{\fp},\D') \, \times_{\Hom_{\fc\!\otimes/\IK}(\C_{\fp},\D')} \, \Hom_{\fc\!\otimes/\IK}(\B_{\fp},\D')\bigr) \\
& \simeq {\varinjlim}_{\D'} \Hom_{\fc\!\otimes/\IK}(\A_{\fp},\D') \, \times_{{\varinjlim}_{\D'} \Hom_{\fc\!\otimes/\IK}(\C_{\fp},\D')} \, {\varinjlim}_{\D'} \Hom_{\fc\!\otimes/\IK}(\B_{\fp},\D') \\
& \simeq \Hom_{\fc\!\otimes/\IK}(\A_{\fp},\D)\,  \times_{\Hom_{\fc\!\otimes/\IK}(\C_{\fp},\D) } \, \Hom_{\fc\!\otimes/\IK}(\B_{\fp},\D)  \\
& \simeq \Hom_{\c\otimes/\IK}(\A,\D) \, \times_{\Hom_{\c\otimes/\IK}(\C,\D) } \, \Hom_{\c\otimes/\IK}(\B,\D).
\end{align*}
This shows the desired universal property.
\end{proof}

Now let us treat the general case.

\begin{prop} \label{lfpcocomplete}
The $2$-categories $\LFP$ and $\LFP_{\otimes}$ are bicategorically cocomplete. 
\end{prop}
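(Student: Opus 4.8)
The plan is to deduce the statement from the bicategorical cocompleteness of $\cat_{\fc}$ and $\cat_{\fc\!\otimes}$, already established in \cref{catcocomplete} and \cref{tensorcocomplete}, by transporting colimits across the biequivalence between locally finitely presentable categories and small finitely cocomplete categories. Concretely, I would first record that passing to finitely presentable objects and Indization define a biequivalence of $2$-categories
\[(-)_{\fp} : \LFP_{\otimes} \rightleftarrows \cat_{\fc\!\otimes} : \Ind,\]
and likewise $\LFP \simeq \cat_{\fc}$ (and the same in the $\IK$-linear variants). This is exactly the correspondence already exploited in \cref{coprodlfp} and \cref{pushy}, now packaged as an equivalence.

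On objects the biequivalence rests on the two identifications $\Ind(\A_{\fp}) \simeq \A$ for a locally finitely presentable (tensor) category $\A$, and $(\Ind \B)_{\fp} \simeq \B$ for a small finitely cocomplete (tensor) category $\B$. The only point requiring genuine care is the second one: the finitely presentable objects of $\Ind(\B)$ are a priori the retracts of objects of $\B$, so the identity $(\Ind \B)_{\fp} \simeq \B$ needs $\B$ to be idempotent complete. This holds because a finitely cocomplete category has split idempotents — if $e = e^2 : A \to A$, then the coequalizer $r : A \to B$ of $e,\id_A$ together with the induced map $s : B \to A$ with $sr = e$ splits $e$. On Hom-categories the biequivalence is the equivalence
\[\Hom_{\LFP_{\otimes}}(\A,\B) \simeq \Hom_{\fc\!\otimes}(\A_{\fp},\B_{\fp}),\]
asserting that a cocontinuous tensor functor preserving finitely presentable objects is the same datum as a finitely cocontinuous tensor functor between the $\fp$-subcategories (restriction one way, $\Ind$ the other), together with the evident identification of $2$-cells.

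Finally, a biequivalence reflects the existence of bicategorical weighted colimits: since $X *_b A$ is characterised by an equivalence of Hom-categories and a biequivalence induces equivalences on all Hom-categories, $X *_b A$ exists in $\LFP_{\otimes}$ as soon as the corresponding weighted colimit of the diagram composed with $(-)_{\fp}$ exists in $\cat_{\fc\!\otimes}$, and it is then computed as the Indization of the latter. Together with \cref{tensorcocomplete} (for $\LFP_{\otimes}$) and \cref{catcocomplete} (for $\LFP$) this gives the claim. The main obstacle is the careful verification of the biequivalence in the first two steps — in particular that $\Ind$ genuinely lands in $\LFP_{\otimes}$ with morphisms preserving finitely presentable objects, and that the two composites are pseudonaturally equivalent to the identities, not merely on objects. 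A more hands-on alternative, staying closer to the proofs of \cref{coprodlfp} and \cref{pushy}, avoids the biequivalence language: by \cref{allcolim} it suffices to exhibit bicategorical coproducts, coinserters and coequifiers, each obtained by restricting to $\fp$-subcategories, forming the corresponding colimit in $\cat_{\fc\!\otimes}$ via \cref{allcoprodtensor} and \cref{coinscoeqtensor}, and applying $\Ind$, the universal property being checked by the same chain of Hom-equivalences as in \cref{pushy} via \cref{smallred}.
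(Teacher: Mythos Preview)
Your proposal is correct and follows essentially the same route as the paper: the paper's proof simply records that $\Ind : \cat_{\fc} \to \LFP$ and $\Ind : \cat_{\fc\!\otimes} \to \LFP_{\otimes}$ are biequivalences with pseudo-inverse $(-)_{\fp}$, and then invokes \cref{catcocomplete} and \cref{tensorcocomplete}. Your additional justification of $(\Ind\B)_{\fp}\simeq\B$ via idempotent completeness of finitely cocomplete categories is a welcome elaboration of a point the paper leaves implicit.
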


\begin{proof}
The $2$-functors $\Ind : \cat_{\fc} \to \LFP$ and $\Ind : \cat_{\fc\!\otimes} \to \LFP_{\otimes}$ are equivalences of $2$-categories; the pseudo-inverse $2$-functors are given by $\A \mapsto \A_{\fp}$ in each case. Since we know from \cref{catcocomplete} and \cref{tensorcocomplete} that $\cat_{\fc}$ and $\cat_{\fc\!\otimes}$ are bicategorically cocomplete, the claim follows.
\end{proof}

Next, we observe that the universal properties of bicategorical weighted colimits in $\LFP$ and $\LFP_{\otimes}$ actually hold in the larger $2$-categories $\Cat_{\c}$ and $\Cat_{\c\otimes}$ of cocomplete (tensor) categories. Again, we can use the same proof as in \cref{coprodlfp}.
 
\begin{lemma} \label{colimremain}
The inclusion $2$-functors $\LFP \hookrightarrow \Cat_{\c}$ and $\LFP_{\otimes} \hookrightarrow \Cat_{\c\otimes}$ preserve all bicategorical weighted colimits.
\end{lemma}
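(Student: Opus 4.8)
The plan is to run the same argument as in \cref{coprodlfp} and \cref{pushy}, now for an arbitrary weight. Since the non-tensor case and the $\IK$-linear variants are entirely analogous, I would treat $\LFP_{\otimes} \hookrightarrow \Cat_{\c\otimes}$. Fix a small bicategory $\J$, a weight $X : \J^{\op} \to \cat$ and a homomorphism $A : \J \to \LFP_{\otimes}$. By \cref{lfpcocomplete} the $2$-functor $\Ind : \cat_{\fc\!\otimes} \to \LFP_{\otimes}$ is an equivalence and hence preserves all bicategorical weighted colimits; applying its pseudo-inverse $(-)_{\fp}$ to $A$ yields a diagram $A_{\fp} : \J \to \cat_{\fc\!\otimes}$ whose weighted colimit $C \coloneqq X *_b A_{\fp}$ is a small finitely cocomplete tensor category, and $X *_b A \simeq \Ind(C)$ in $\LFP_{\otimes}$. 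It therefore suffices to show that $\Ind(C)$ carries the universal property of $X *_b A$ against \emph{all} cocomplete tensor categories, i.e.\ that for every $\D \in \Cat_{\c\otimes}$ there is an equivalence $\Hom_{\c\otimes}(\Ind(C),\D) \simeq \Hom\bigl(X,\Hom_{\c\otimes}(A(-),\D)\bigr)$ natural in $\D$.

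I would establish this through the chain
\[\Hom_{\c\otimes}(\Ind(C),\D) \simeq \Hom_{\fc\!\otimes}(C,\D) \simeq \Hom\bigl(X,\Hom_{\fc\!\otimes}(A_{\fp}(-),\D)\bigr) \simeq \Hom\bigl(X,\Hom_{\c\otimes}(A(-),\D)\bigr).\]
The first equivalence is the universal property of Ind-completion used already in \cref{coprodlfp}, valid since $C$ is small and $\D$ is cocomplete. The third comes from the pointwise equivalences $\Hom_{\fc\!\otimes}(A_{\fp}(j),\D) \simeq \Hom_{\c\otimes}(\Ind(A_{\fp}(j)),\D) = \Hom_{\c\otimes}(A(j),\D)$, which are pseudonatural in $j \in \J$, so that applying the weight $\Hom(X,-)$ preserves them. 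The middle equivalence is the heart of the matter.

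For the middle equivalence the defining universal property of $C$ in $\cat_{\fc\!\otimes}$ a priori only compares maps into \emph{small} targets, so I would extend it to the large $\D$ exactly as in \cref{pushy}. Writing $\D = \varinjlim_{\D'} \D'$ as the directed union of its small full subcategories $\D'$ closed under finite colimits and finite tensor products, smallness of $C$ gives $\Hom_{\fc\!\otimes}(C,\D) \simeq \varinjlim_{\D'}\Hom_{\fc\!\otimes}(C,\D')$ by \cref{smallred}, and the universal property of $C$ turns each term into $\Hom\bigl(X,\Hom_{\fc\!\otimes}(A_{\fp}(-),\D')\bigr)$. What remains is to commute the directed colimit past the weighted limit $\Hom(X,-)$ and then reassemble the inner terms by \cref{smallred} once more.

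The hard part will be precisely this commutation, which for a general weight is no longer the elementary ``finite limits commute with filtered colimits'' used for the bicategorical pullback in \cref{pushy}. The observation that makes it go through is that, for $\D' \subseteq \D''$, the transition functor $\Hom_{\fc\!\otimes}(A_{\fp}(-),\D') \to \Hom_{\fc\!\otimes}(A_{\fp}(-),\D'')$ is fully faithful, because $\D' \hookrightarrow \D''$ is; thus the directed system is a union along fully faithful functors. Since $\J$ and every category $X(j)$ are small, the totality of objects occurring in any pseudonatural transformation $X \Rightarrow \varinjlim_{\D'}\Hom_{\fc\!\otimes}(A_{\fp}(-),\D')$, and in any modification between two of them, forms a small family and hence lands in a single stage $\D'$ by directedness. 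Consequently every such transformation and modification factors through some stage, which yields $\varinjlim_{\D'}\Hom\bigl(X,\Hom_{\fc\!\otimes}(A_{\fp}(-),\D')\bigr) \simeq \Hom\bigl(X,\varinjlim_{\D'}\Hom_{\fc\!\otimes}(A_{\fp}(-),\D')\bigr)$, and a final application of \cref{smallred} identifies the right-hand side with $\Hom\bigl(X,\Hom_{\fc\!\otimes}(A_{\fp}(-),\D)\bigr)$. Assembling the three equivalences gives the claim. The same reasoning, with $\cat_{\fc}$ and \cref{catcocomplete} in place of $\cat_{\fc\!\otimes}$ and \cref{tensorcocomplete} and the evident analogue of \cref{smallred}, covers $\LFP \hookrightarrow \Cat_{\c}$, and the $\IK$-linear variants are identical.
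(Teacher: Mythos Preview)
Your proposal is correct and follows essentially the same route as the paper: reduce to $\Ind(X *_b A_{\fp})$, pass to finitely cocontinuous maps, approximate the large target $\D$ by its small subcategories via \cref{smallred}, apply the defining universal property of $X *_b A_{\fp}$ stagewise, and then commute the filtered colimit past $\Hom(X,-)$ using that $\J$ and each $X(j)$ are small. Your write-up is in fact more explicit than the paper's at the commutation step, spelling out the full faithfulness of the transition functors, whereas the paper records only the smallness hypothesis.
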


\begin{proof} 
Let $\J$ be a small bicategory, $X : \J^{\op} \to \cat$ be a weight and $\A : \J \to \LFP$ be a homomorphism of bicategories. By \cref{lfpcocomplete} the bicategorical weighted colimit $X *_b \A$ in $\LFP$ can be constructed as follows: Let $\A_{\fp} : \J \to \cat_{\fc}$ be defined by $\A_{\fp}(j) \coloneqq \A(j)_{\fp}$, so that $\A(j) = \Ind(\A_{\fp}(j))$. Then $X *_b \A = \Ind(X *_b \A_{\fp})$, where $X *_b \A_{\fp}$ is the bicategorical weighted colimit in $\cat_{\fc}$. For $\C \in \Cat_{\c}$ we have (where $\C'$ runs through all small full subcategories of $\C$ which are closed under finite colimits)
\begin{align*}
\Hom_{\c}(X * _b \A, \C) & \simeq \Hom_{\fc}(X *_b \A_{\fp},\C) \\
& \simeq {\varinjlim}_{\C'} \Hom_{\fc}(X *_b \A_{\fp},\C') \\
& \simeq {\varinjlim}_{\C'} \Hom\bigl(X(-),\Hom_{\fc}(\A_{\fp}(-),\C')\bigr) \\
& \simeq \Hom\bigl(X(-),{\varinjlim}_{\C'}  \Hom_{\fc}(\A_{\fp}(-),\C')\bigr) \quad (\ast) \\
& \simeq \Hom\bigl(X(-),\Hom_{\fc}(\A_{\fp}(-),\C)\bigr) \\
& \simeq \Hom\bigl(X(-),\Hom_{\c}(\A(-),\C)\bigr).
\end{align*}
In $(\ast)$ we have used that $\J$ and each $X(j)$ is small. This shows that  $X *_b \A$ has the desired universal property. The proof for $\LFP_{\otimes} \hookrightarrow \Cat_{\c\otimes}$ is very similar.
\end{proof}
 
We will now describe the bicategorical colimits in $\LFP$ and $\LFP_{\otimes}$ in a much more concrete fashion. At least we will be able to do this for the underlying categories of these bicategorical colimits, since the description of colimits (as well as tensor products in the case of $\LFP_{\otimes}$) is a much more complicated issue. Let us remark without proof that, at least for $\LFP$, these descriptions can also be obtained from \cite[Proposition 2.1.11]{CJF13} and actually also hold without the requirement that the tensor functors preserve finitely presentable objects (for this one needs the theory of accessible categories).

\begin{rem}\label{indes}
If $\C$ is a small finitely cocomplete category, then a presheaf $P : \C^{\op} \to \Set$ is a filtered colimit of representables, i.e.\ belongs to $\Ind(\C)$, if and only if $P$ is finitely continuous. For a proof, see \cite[Proposition 6.1.2]{Bor94}. Thus, we have
\[\Ind(\C) = \Hom_{\fc}(\C,\Set^{\op})^{\op}.\]
\end{rem}
 
\begin{prop} \label{lfpcoins}
Let $F,G : \A \rightrightarrows \B$ be two morphisms in $\LFP$. The bicategorical coinserter of $F,G$ is explicitly given by the category $\D$ of those $(B,\alpha)$, where $B \in \B$ and
\[\alpha : \Hom(G(-),B) \to \Hom(F(-),B)\]
is a morphism of functors $\A^{\op} \to \Set$. The bicategorical coisoinserter (coequalizer) of $F,G$ is the full subcategory of those $(B,\alpha)$ for which $\alpha$ is an isomorphism.
\end{prop}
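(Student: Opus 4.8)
The plan is to compute the coinserter abstractly through the equivalence $\LFP \simeq \cat_{\fc}$ and then make it explicit via \cref{indes}. By \cref{lfpcocomplete} the bicategorical coinserter of $F,G$ in $\LFP$ is $\Ind(\E)$, where $\E$ is the bicategorical coinserter of the restrictions $F_{\fp},G_{\fp} : \A_{\fp} \rightrightarrows \B_{\fp}$ in $\cat_{\fc}$ (these restrictions exist because $F,G$ preserve finitely presentable objects). By \cref{indes} the underlying category is $\Ind(\E) = \Hom_{\fc}(\E,\Set^{\op})^{\op}$, so it suffices to describe the finitely cocontinuous functors $\E \to \Set^{\op}$, which is exactly what the universal property of the coinserter $\E$ delivers once the target $\Set^{\op}$ is fed in.

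First I would note that the universal property of $\E$, stated for small finitely cocomplete targets, also applies at $T=\Set^{\op}$: since $\E$, $\A_{\fp}$ and $\B_{\fp}$ are small, every relevant functor and natural transformation has essentially small image and hence factors through a small finitely cocomplete full subcategory of $\Set^{\op}$. Writing $\Set^{\op}$ as the filtered union of such subcategories, exactly as in \cref{smallred} and \cref{colimremain}, lets me pass the equivalence $\Hom_{\fc}(\E,T) \simeq \{(q,\mu)\}$ to the colimit. This identifies $\Hom_{\fc}(\E,\Set^{\op})$ with the category of pairs $(q,\mu)$, where $q : \B_{\fp} \to \Set^{\op}$ is finitely cocontinuous and $\mu : q \circ F_{\fp} \to q \circ G_{\fp}$ is a morphism in the functor category $\Hom(\A_{\fp},\Set^{\op})$.

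Next I would unwind this data. Applying \cref{indes} to $\B_{\fp}$, a finitely cocontinuous $q : \B_{\fp} \to \Set^{\op}$ is the same as an object $B \in \B = \Ind(\B_{\fp})$ with $q = \Hom_\B(-,B)$; hence $q\circ F_{\fp}$ and $q\circ G_{\fp}$ become $\Hom_\B(F(-),B)$ and $\Hom_\B(G(-),B)$ on $\A_{\fp}$. Because morphisms in $\Set^{\op}$ are reversed, the datum $\mu$ is precisely a natural transformation $\alpha : \Hom(G(-),B) \to \Hom(F(-),B)$ of functors $\A_{\fp}^{\op} \to \Set$. Finally, since $F,G$ are cocontinuous and $\Hom_\B(-,B)$ turns the canonical filtered colimits presenting objects of $\A$ into limits, both $\Hom(F(-),B)$ and $\Hom(G(-),B)$ are right Kan extensions along $\A_{\fp}^{\op} \hookrightarrow \A^{\op}$ of their restrictions; by the adjunction $(\text{restriction}) \dashv \Ran$, natural transformations between them are the same over $\A_{\fp}^{\op}$ and over all of $\A^{\op}$. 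This is what permits stating the result with $\A^{\op}$, as in the proposition.

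It then remains to track the two occurrences of $(-)^{\op}$ to pin down the morphisms. A morphism $q \to q'$ corresponds under \cref{indes} to a $\B$-morphism in the \emph{reverse} direction, so a morphism in $\Hom_{\fc}(\E,\Set^{\op})$ from $(B,\alpha)$ to $(B',\alpha')$ amounts to a map $B' \to B$ compatible with $\alpha,\alpha'$; the outer $(-)^{\op}$ from \cref{indes} applied to $\E$ reverses this once more, so in $\Ind(\E)$ a morphism $(B,\alpha) \to (B',\alpha')$ is an honest $\B$-morphism $h : B \to B'$ making the evident square of post-compositions by $h$ commute. This is precisely the category $\D$. The coisoinserter (coequalizer) statement then follows from the identical computation, using that the coequalizer universal property of $\E$ forces $\mu$, and hence $\alpha$, to be an isomorphism. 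The main obstacle I anticipate is purely bookkeeping: composing the two $(-)^{\op}$'s (one inside $\B = \Ind(\B_{\fp})$, one from \cref{indes} applied to $\E$) so that the morphisms of $\D$ emerge in the stated unreversed direction, and justifying the use of the universal property of $\E$ at the large target $\Set^{\op}$.
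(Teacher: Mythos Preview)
Your proof is correct and follows essentially the same route as the paper: restrict to finitely presentable objects, form the coinserter $\E$ in $\cat_{\fc}$, take $\Ind$, and use \cref{indes} to unwind the result as pairs $(B,\alpha)$. The only minor variation is that the paper passes from finitely cocontinuous $\B_{\fp}\to\Set^{\op}$ to continuous functors $\B^{\op}\to\Set$ and then invokes Freyd's special adjoint functor theorem for representability, whereas you apply \cref{indes} a second time (to $\B_{\fp}$) to produce the object $B\in\B$ directly; both arguments are equivalent, and your extra care with the two $(-)^{\op}$'s and the extension from $\A_{\fp}^{\op}$ to $\A^{\op}$ is welcome detail the paper leaves implicit.
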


\begin{proof}
Consider the morphisms $F_{\fp},G_{\fp} : \A_{\fp} \rightrightarrows \B_{\fp}$ in $\cat_{\fc}$ and their bicategorical coinserter $\B_{\fp} \to \C$. The bicategorical coinserter of $F,G$ is thus given by $\B = \Ind(\B_{\fp}) \to \Ind(\C)$. By \cref{indes} $\Ind(\C)$ is opposite to the category of finitely cocontinuous functors $\C \to \Set^{\op}$, which by the universal property of $\C$ (applied to a suitable small portion of $\Set^{\op}$) correspond to finitely cocontinuous functors $H : \B_{\fp} \to \Set^{\op}$ equipped with a morphism $H \circ F_{\fp} \to H \circ G_{\fp}$. These, in turn, correspond to continuous functors $K : \B^{\op} \to \Set$ equipped with a morphism $K \circ G \to K \circ F$. Since $\B$ is locally presentable, Freyd's special adjoint functor theorem implies that $K$ is representable, so we are done. For bicategorical coisoinserters a similar proof works.
\end{proof}

\begin{rem}
In the description of the bicategorical coinserter $\D$ of $F,G : \A \rightrightarrows \B$ in \cref{lfpcoins} it is unclear how colimits in $\D$ look like; also the universal functor $P : \B \to \D$ is not easy to see. In fact, there is no direct construction. Reflection techniques (or transfinite compositions) are necessary. The forgetful functor $\D \to \B$, $(B,\alpha) \mapsto B$ is actually the right adjoint of $P$, and $\alpha$ is actually the mate of the universal morphism $P \circ F \to P \circ G$. Similar remarks apply to the following examples as well.
\end{rem}

\begin{prop} \label{lfpcoequi}
Let $F,G : \A \rightrightarrows \B$ be two morphisms in $\LFP$ and $\alpha,\beta : F \rightrightarrows G$ be two morphisms. The bicategorical coequifier of $\alpha,\beta$ is explicitly given by the category of those $B \in \B$ for which $\alpha^*,\beta^* : \Hom(G(-),B) \rightrightarrows \Hom(F(-),B)$ are equal.
\end{prop}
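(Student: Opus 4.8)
The plan is to mirror the argument of \cref{lfpcoins}, replacing the coinserter in $\cat_{\fc}$ by the coequifier supplied by \cref{coequifiers} and trading the ``extra morphism'' for an ``equality of morphisms'' throughout. First I would pass to finitely presentable objects. Since $F,G$ are cocontinuous and preserve finitely presentable objects, they restrict to finitely cocontinuous functors $F_{\fp},G_{\fp} : \A_{\fp} \rightrightarrows \B_{\fp}$ in $\cat_{\fc}$, and $\alpha,\beta$ restrict to morphisms $\alpha_{\fp},\beta_{\fp} : F_{\fp} \rightrightarrows G_{\fp}$. By \cref{coequifiers} the pair $\alpha_{\fp},\beta_{\fp}$ has a bicategorical coequifier $\B_{\fp} \to \C$ in $\cat_{\fc}$, and by \cref{lfpcocomplete} (using that $\Ind$ is an equivalence of $2$-categories, hence preserves bicategorical colimits, with $F = \Ind(F_{\fp})$ etc.) the coequifier of $\alpha,\beta$ in $\LFP$ is $\B = \Ind(\B_{\fp}) \to \Ind(\C)$. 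So the whole task reduces to identifying $\Ind(\C)$ concretely.

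Next I would unwind $\Ind(\C)$ using \cref{indes}, which gives $\Ind(\C) = \Hom_{\fc}(\C,\Set^{\op})^{\op}$, so that its objects are the finitely cocontinuous functors $\C \to \Set^{\op}$. Exactly as in \cref{lfpcoins}, applying the universal property of the coequifier $\C$ to a suitable small finitely cocomplete portion of $\Set^{\op}$, these correspond to finitely cocontinuous functors $H : \B_{\fp} \to \Set^{\op}$ satisfying $H \circ \alpha_{\fp} = H \circ \beta_{\fp}$. In turn, via $\Hom_{\fc}(\B_{\fp},\Set^{\op})^{\op} = \Ind(\B_{\fp}) = \B$, such an $H$ is the restricted Yoneda functor of an object $B \in \B$ which is representable by Freyd's special adjoint functor theorem (as $\B$ is locally presentable), namely $H = \Hom_\B(-,B)$ on $\B_{\fp}$.

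Finally I would translate the condition $H \circ \alpha_{\fp} = H \circ \beta_{\fp}$. For $A \in \A_{\fp}$, the map $H(\alpha_{\fp,A})$ is, after accounting for the contravariance of $\Hom(-,B)$ and the opposite in $\Set^{\op}$, precisely $\alpha^* = \Hom(\alpha_A,B) : \Hom(G(A),B) \to \Hom(F(A),B)$, and likewise for $\beta$; thus the condition reads $\alpha^* = \beta^*$ on $\A_{\fp}$. To upgrade this from $\A_{\fp}$ to all of $\A$ I would observe that both $\Hom(F(-),B)$ and $\Hom(G(-),B)$ are continuous functors $\A^{\op} \to \Set$ (because $F,G$ are cocontinuous and $\Hom(-,B)$ turns colimits into limits), so a natural transformation between them is determined by its restriction to $\A_{\fp}$. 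Hence $\alpha^* = \beta^*$ on $\A_{\fp}$ is equivalent to $\alpha^* = \beta^*$ on all of $\A$, yielding exactly the stated description.

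The main obstacle I expect is the bookkeeping already present in \cref{lfpcoins}: carefully justifying the passage from the large target $\Set^{\op}$ to a small finitely cocomplete subcategory so that the (small) universal property of the coequifier $\C$ applies, and keeping the two variances (contravariance of $\Hom(-,B)$ together with the opposite category $\Set^{\op}$) straight so that the induced maps point in the direction $\Hom(G(-),B) \to \Hom(F(-),B)$ as claimed. The genuinely new but easy point relative to the coinserter case is that the universal property now imposes an equality rather than producing an extra $2$-cell, which is why the coequifier comes out as the full subcategory of $\B$ cut out by the condition $\alpha^* = \beta^*$.
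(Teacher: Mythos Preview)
Your proposal is correct and follows essentially the same route as the paper: restrict to finitely presentable objects, take the coequifier $\B_{\fp}\to\C$ in $\cat_{\fc}$, identify $\Ind(\C)$ via \cref{indes} as $\Hom_{\fc}(\C,\Set^{\op})^{\op}$, translate through the universal property of $\C$ to finitely cocontinuous $H:\B_{\fp}\to\Set^{\op}$ with $H\circ\alpha_{\fp}=H\circ\beta_{\fp}$, and then use representability via Freyd. The only cosmetic difference is that the paper phrases the last step by first passing to continuous $K:\B^{\op}\to\Set$ coequifying $\alpha^{\op},\beta^{\op}$ and then invoking representability, whereas you represent first and then argue separately that equality of $\alpha^*,\beta^*$ on $\A_{\fp}$ extends to all of $\A$; these are two ways of packaging the same passage from $\A_{\fp}$ to $\A$.
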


\begin{proof}
We can use the same method of proof as in \cref{lfpcoins}. Consider the morphisms $F_{\fp},G_{\fp} : \A_{\fp} \rightrightarrows \B_{\fp}$ in $\cat_{\fc}$, the morphisms $\alpha_{\fp},\beta_{\fp} : F_{\fp} \rightrightarrows G_{\fp}$ and their bicategorical coequifier $\B_{\fp} \to \C$. Then $\B = \Ind(\B_{\fp}) \to \Ind(\C)$ is the bicategorical coequifier of $\alpha,\beta$. By \cref{indes} $\Ind(\C)$ is opposite to the category of finitely cocontinuous functors $\C \to \Set^{\op}$, which correspond to finitely cocontinuous functors $H : \B_{\fp} \to \Set^{\op}$ which coequify $\alpha_{\fp},\beta_{\fp}$, which in turn correspond to continuous functors $K : \B^{\op} \to \Set$ which coequify $\alpha^{\op},\beta^{\op}$. But then $K$ is representable, so we are done.
\end{proof}

\begin{prop} \label{lfpcoinv}
Let $F,G : \A \rightrightarrows \B$ be two morphisms in $\LFP$ and $\alpha : F \to G$ be a morphism. The bicategorical coinverter of $\alpha$ is explicitly given by the category of those $B \in \B$ for which $\alpha^* : \Hom(G(-),B) \to \Hom(F(-),B)$ is an isomorphism.
\end{prop}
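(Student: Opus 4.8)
The plan is to follow verbatim the method used in the proofs of \cref{lfpcoins} and \cref{lfpcoequi}. First I would pass to finitely presentable objects, obtaining morphisms $F_{\fp}, G_{\fp} : \A_{\fp} \rightrightarrows \B_{\fp}$ in $\cat_{\fc}$ together with a morphism $\alpha_{\fp} : F_{\fp} \to G_{\fp}$. Since $\cat_{\fc}$ has bicategorical coinserters (\cref{coins}) and bicategorical coequifiers (\cref{coequifiers}), it has bicategorical coinverters by \cref{coeqviains}; let $\B_{\fp} \to \C$ denote the coinverter of $\alpha_{\fp}$. As $\Ind : \cat_{\fc} \to \LFP$ is an equivalence of $2$-categories (cf.\ \cref{lfpcocomplete}), the induced morphism $\B = \Ind(\B_{\fp}) \to \Ind(\C)$ is the bicategorical coinverter of $\alpha$ in $\LFP$, and it remains only to describe $\Ind(\C)$ explicitly.

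By \cref{indes} we have $\Ind(\C) = \Hom_{\fc}(\C, \Set^{\op})^{\op}$, so the objects of $\Ind(\C)$ are the finitely cocontinuous functors $\C \to \Set^{\op}$. Applying the universal property of the coinverter $\C$ (to a suitable small portion of $\Set^{\op}$), these correspond to finitely cocontinuous functors $H : \B_{\fp} \to \Set^{\op}$ for which $H \circ \alpha_{\fp}$ is an isomorphism. Dualizing, they correspond in turn to continuous functors $K : \B^{\op} \to \Set$ for which the morphism $K \circ G \to K \circ F$ induced by $\alpha$ is an isomorphism; the invertibility condition passes unchanged through the dualization, since the opposite of an isomorphism is again an isomorphism.

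Finally, because $\B$ is locally presentable, Freyd's special adjoint functor theorem forces each such continuous $K$ to be representable, say $K \cong \Hom(-, B)$ for some $B \in \B$. Under this identification the morphism $K \circ G \to K \circ F$ becomes precisely $\alpha^* : \Hom(G(-), B) \to \Hom(F(-), B)$, so requiring it to be an isomorphism is exactly the condition in the statement. This identifies $\Ind(\C)$ with the full subcategory of those $B \in \B$ for which $\alpha^*$ is an isomorphism, which completes the proof.

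I do not expect a genuine obstacle here, as the argument is the same as for \cref{lfpcoequi}. The only two points requiring a moment's care are the availability of bicategorical coinverters in $\cat_{\fc}$ (supplied by \cref{coeqviains}) and the observation that invertibility of the transition morphism is self-dual, so that it survives both the passage to opposite categories and the representability step intact.
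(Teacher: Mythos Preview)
Your proof is correct and follows exactly the method the paper intends: the paper's own proof of \cref{lfpcoinv} simply reads ``We can use the same method of proof as in \cref{lfpcoins},'' and your argument spells out precisely that method, including the passage to $\cat_{\fc}$, the use of \cref{indes}, and the representability step via Freyd's theorem. The remark that invertibility is self-dual is the one small point specific to coinverters, and you handle it correctly.
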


\begin{proof}
We can use the same method of proof as in \cref{lfpcoins}.
\end{proof}

\begin{prop} \label{lfptensor}
Let $\A,\B$ be two objects of $\LFP$. Then their tensor product $\smash{\A \coboxtimes \B}$, which classifies functors on $\A \times \B$ which are cocontinuous in each variable, cf.\ \cite[Corollary 2.2.5]{CJF13}, is explicitly given by the category of those presheaves $\A_{\fp}^{\op} \times \B_{\fp}^{\op} \to \Set$ which are finitely continuous in each variable.
\end{prop}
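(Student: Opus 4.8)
The plan is to follow the method of \cref{lfpcoins,lfpcoequi,lfpcoinv} verbatim: reduce the description of the object $\A \coboxtimes \B$ in $\LFP$ to a description in $\cat_{\fc}$ using $\Ind$ together with \cref{indes}. By \cref{coprodlfp} the tensor product is $\A \coboxtimes \B = \Ind(\A_{\fp} \boxtimes \B_{\fp})$, so the entire task is to unwind the right-hand side.

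First I would apply \cref{indes} to the small finitely cocomplete category $\A_{\fp} \boxtimes \B_{\fp}$, obtaining
\[\A \coboxtimes \B = \Ind(\A_{\fp} \boxtimes \B_{\fp}) = \Hom_{\fc}(\A_{\fp} \boxtimes \B_{\fp}, \Set^{\op})^{\op}.\]
By the universal property of $\boxtimes$ used in \cref{coprodtensor} (applied to a suitable small full subcategory of $\Set^{\op}$, exactly as in the proof of \cref{lfpcoins} to deal with the largeness of $\Set^{\op}$), the inner category $\Hom_{\fc}(\A_{\fp} \boxtimes \B_{\fp}, \Set^{\op})$ is equivalent to the category of functors $F : \A_{\fp} \times \B_{\fp} \to \Set^{\op}$ that are finitely cocontinuous in each variable.

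The remaining step is a bookkeeping of opposite categories. The assignment $F \mapsto F^{\op}$ is an isomorphism $\Hom(\A_{\fp} \times \B_{\fp}, \Set^{\op}) \cong \Hom(\A_{\fp}^{\op} \times \B_{\fp}^{\op}, \Set)^{\op}$, under which $F$ is finitely cocontinuous in each variable if and only if $F^{\op}$ is finitely continuous in each variable. The reversal of $2$-cells built into this isomorphism contributes a single $(-)^{\op}$ which precisely cancels the outer $(-)^{\op}$ coming from \cref{indes}. Composing the two equivalences therefore yields
\[\A \coboxtimes \B \simeq \{G : \A_{\fp}^{\op} \times \B_{\fp}^{\op} \to \Set : G \text{ is finitely continuous in each variable}\},\]
as claimed. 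The only point requiring care --- and the step I expect to be the main (if modest) obstacle --- is keeping the two dualizations straight, i.e.\ verifying that the direction reversal of natural transformations under $F \mapsto F^{\op}$ exactly absorbs the outer opposite of \cref{indes} and that ``finitely cocontinuous into $\Set^{\op}$'' becomes ``finitely continuous into $\Set$''; everything else is formal.
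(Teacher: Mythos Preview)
Your proposal is correct and follows essentially the same approach as the paper: apply \cref{indes} to $\Ind(\A_{\fp}\boxtimes\B_{\fp})$, then use the universal property of $\boxtimes$ to pass to functors $\A_{\fp}\times\B_{\fp}\to\Set^{\op}$ which are finitely cocontinuous in each variable, and finally dualize. The paper's proof is terser (it leaves the final opposite-category bookkeeping implicit), whereas you spell it out; one small imprecision is that \cref{coprodlfp} is stated for $\LFP_{\otimes}$ rather than $\LFP$, but the underlying-category identification $\A\coboxtimes\B=\Ind(\A_{\fp}\boxtimes\B_{\fp})$ is the same, and the paper itself simply takes this as the definition within the proof.
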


\begin{proof}
By \cref{indes} the category $\smash{\A \coboxtimes \B \coloneqq \Ind(\A_{\fp} \boxtimes \B_{\fp})}$ is opposite to the category of finitely cocontinuous functors $\A_{\fp} \boxtimes \B_{\fp} \to \Set^{\op}$. By the universal property of $\boxtimes$, these correspond to functors $\A_{\fp} \times \B_{\fp} \to \Set^{\op}$ which are finitely cocontinuous in each variable.
\end{proof}

Now let us look at $\LFP_{\otimes}$. Notice that \cref{lfptensor} already gives a description of binary bicategorical coproducts in $\LFP_{\otimes}$ (at least, of their underlying categories).

\begin{prop} \label{lfpcoinstensor}
Let $F,G : \A \rightrightarrows \B$ be two morphisms in $\LFP_{\otimes}$. The bicategorical coinserter of $F,G$ is explicitly given by the category of those $(B,\alpha)$, where $B \in \B$ and
\[\alpha : \HOM(G(-),B) \to \HOM(F(-),B)\]
is a morphism of functors $\A^{\op} \to \B$ which is compatible with the tensor structure on $\A$, i.e.\ two following diagrams commute:
\[\begin{tikzcd}[row sep = 6ex]
\HOM(G(1),B) \ar{rr}{\alpha} && \HOM(F(1),B) \\ & B \ar{ul}{\sim} \ar{ur}[swap]{\sim} & 
\end{tikzcd}\]
\[\begin{tikzcd}[row sep = 6ex]
\HOM(G(A \otimes A'),B) \ar{d}[swap]{\alpha} \ar{r}{\sim} & \HOM\bigl(G(A'),\HOM(G(A),B)\bigr) \ar{r}{\alpha} & \HOM\bigl(G(A'),\HOM(F(A),B)\bigr) \ar{d}{\sim} \\
\HOM(F(A \otimes A'),B) & \HOM\bigl(F(A),\HOM(F(A'),B)\bigr)  \ar{l}{\sim} & \HOM\bigl(F(A),\HOM(G(A'),B)\bigr) \ar{l}{\alpha}
\end{tikzcd}\]
\vphantom{i am a space}

\noindent The bicategorical coisoinserter (coequalizer) of $F,G$ is the full subcategory of those $(B,\alpha)$ for which $\alpha$ is an isomorphism.
\end{prop}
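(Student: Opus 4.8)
The plan is to mirror the proof of \cref{lfpcoins} as closely as possible, passing to the finitely presentable parts and invoking representability, while carrying along the extra tensor bookkeeping through the closed structure of $\B$. First I would restrict $F,G$ to obtain morphisms $F_{\fp}, G_{\fp} : \A_{\fp} \rightrightarrows \B_{\fp}$ in $\cat_{\fc\!\otimes}$; by \cref{coinscoeqtensor} these have a bicategorical coinserter $\B_{\fp} \to \C$ in $\cat_{\fc\!\otimes}$, and since $\Ind : \cat_{\fc\!\otimes} \to \LFP_{\otimes}$ is an equivalence of $2$-categories (\cref{lfpcocomplete}), the bicategorical coinserter of $F,G$ is $\B = \Ind(\B_{\fp}) \to \Ind(\C)$. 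By \cref{indes} the underlying category of $\Ind(\C)$ is $\Hom_{\fc}(\C,\Set^{\op})^{\op}$, so it remains to describe the finitely cocontinuous functors $\C \to \Set^{\op}$ in terms of data attached to $\B$.

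Next I would unwind the construction of $\C$ from the proofs of \cref{coinsish} and \cref{tensorcoins}, as adapted to $\cat_{\fc\!\otimes}$ in \cref{coinscoeqtensor}: the category $\C$ is built from $\B_{\fp}$ first as a $\cat_{\fc}$-coinserter of the stabilized pair $\overline{F_{\fp}}, \overline{G_{\fp}} : \B_{\fp} \boxtimes \A_{\fp} \rightrightarrows \B_{\fp}$ sending $B \boxtimes A$ to $B \otimes F_{\fp}(A)$ resp.\ $B \otimes G_{\fp}(A)$, and then as coequifiers enforcing exactly the unit and multiplicativity conditions that promote the inserted $2$-cell $\delta'$ to a monoidal natural transformation. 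Consequently, a finitely cocontinuous functor $\C \to \Set^{\op}$ amounts to a finitely cocontinuous functor $H : \B_{\fp} \to \Set^{\op}$ equipped with a morphism $H \circ \overline{F_{\fp}} \to H \circ \overline{G_{\fp}}$ subject to those two coequifier equalities (testing against a suitable small portion of $\Set^{\op}$ as in \cref{lfpcoins}). As there, $H$ extends to a continuous functor $\B^{\op} \to \Set$, which by Freyd's special adjoint functor theorem is representable, say by $B \in \B$. The inserted morphism then reads $\Hom(B' \otimes G(A), B) \to \Hom(B' \otimes F(A), B)$, natural in $B' \in \B$ and $A \in \A$; applying the tensor-hom adjunction (i.e.\ closedness of $\B$) together with the Yoneda lemma in the variable $B'$ converts it precisely into a morphism $\alpha : \HOM(G(-), B) \to \HOM(F(-), B)$ of functors $\A^{\op} \to \B$.

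Finally I would translate the two coequifier equalities through the same adjunction. The unit condition (the pair $\alpha',\beta'$ of \cref{tensorcoins}) becomes the commutativity of the triangle relating $\HOM(G(1),B)$, $\HOM(F(1),B)$ and $B$, using $G(1) \cong 1 \cong F(1)$ and $\HOM(1,B) \cong B$; the multiplicativity condition (the pair $\alpha,\beta$ on $F(-)\otimes F(-) \rightrightarrows G(-\otimes-)$) becomes the commutativity of the second displayed diagram, expressing compatibility of $\alpha$ with the isomorphisms $\HOM(G(A\otimes A'),B) \cong \HOM(G(A'),\HOM(G(A),B))$ and their $F$-analogues. The coisoinserter (coequalizer) case is identical, using the coisoinserter in place of the coinserter in the construction (cf.\ \cref{coisoinsish}), which forces $\alpha$ to be an isomorphism. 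I expect the main obstacle to lie not in any single step but in this last translation: one must verify that the concrete coequifier relations produced by the construction in \cref{tensorcoins} correspond, under the iterated tensor-hom adjunction and with the correct associativity and symmetry coherence data of $\B$, exactly to the two stated diagrams, so that all variances, naturalities, and coherence isomorphisms line up — whereas the reduction and representability steps are essentially verbatim copies of \cref{lfpcoins}.
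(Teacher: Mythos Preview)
Your proposal is correct and follows essentially the same route as the paper: pass to $F_{\fp},G_{\fp}$, unwind the two-step construction of the coinserter in $\cat_{\fc\!\otimes}$ (stabilized coinserter followed by the unit/multiplicativity coequifiers), apply \cref{indes}, use Freyd's special adjoint functor theorem to represent, and convert via the tensor--hom adjunction and Yoneda. The only point where the paper is slightly more explicit is that the coequifiers, following \cref{coequifish}, are themselves stabilized (so they live on $\C \rightrightarrows \C$ and $\C \boxtimes \A_{\fp} \boxtimes \A_{\fp} \rightrightarrows \C$), which is precisely what makes the extra variable $B'$ appear before Yoneda eliminates it; your sketch implicitly assumes this but it is worth naming.
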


\begin{proof}
The bicategorical coinserter of $F,G$ is $\Ind(\D)$, where $\B_{\fp} \to \D$ is the bicategorical coinserter of $F_{\fp},G_{\fp} : \A_{\fp} \rightrightarrows \B_{\fp}$ in $\cat_{\fc\!\otimes}$, whose construction we recall from the proof of \cref{coinscoeqtensor}. We first consider the auxiliary morphisms $\overline{F}_{\fp},\overline{G}_{\fp} : \B_{\fp} \boxtimes \A_{\fp} \to \B_{\fp}$ and take their bicategorical coinserter $(P : \B_{\fp} \to \C, \, \delta : P \circ \overline{F}_{\fp} \to P \circ \overline{G}_{\fp})$ in $\cat_{\fc}$. Then $P$ actually has a tensor structure and, as such, is universal with $\delta' : P \circ F_{\fp} \to P \circ G_{\fp}$. But $\delta'$ is not a morphism of tensor functors. To fix this, one constructs the multiple bicategorical coequifier $Q : \C \to \D$ in $\cat_{\fc}$ of two certain morphisms between two functors $\C \rightrightarrows \C$ (which ensure compatibility with the unit) and two certain morphisms between two functors $\C \boxtimes \A_{\fp} \boxtimes \A_{\fp} \rightrightarrows \C$ (which ensure compatibility with the tensor product). Thanks to our ``stabilization'' with $\C$ we can endow $Q$ with a tensor structure, and $Q \circ P : \B_{\fp} \to \D$ is the desired bicategorical coinserter of $F_{\fp},G_{\fp}$.

It follows from \cref{indes} and the universal property of $\D$ in $\cat_{\fc}$ that $\Ind(\D)^{\op}$ is equivalent to the category of those finitely cocontinuous functors $H : \C \to \Set^{\op}$ which coequify the two mentioned pairs of morphisms $(\ast)$. Because of the universal property of $\C$, $H$ corresponds to a finitely cocontinuous functor $\B_{\fp} \to \Set^{\op}$, or equivalently a continuous functor $K : \B^{\op} \to \Set$, together with a morphism of functors $K \circ \overline{G}_{\fp}^{\op} \to K \circ \overline{F}_{\fp}^{\op}$. This morphism is determined by natural maps
\[K(B' \otimes G(A)) \to K(B' \otimes F(A))\]
for $A \in \A_{\fp}$, $B' \in \B_{\fp}$. Again, $K$ is representable by Freyd's adjoint functor theorem, so that we may assume $K = \Hom(-,B)$ and rewrite the maps as
\[\Hom\bigl(B',\HOM(G(A),B)\bigr) \to \Hom\bigl(B',\HOM(F(A),B)\bigr).\]
By the Yoneda Lemma, these reduce to morphisms
\[\alpha : \HOM(G(A),B) \to \HOM(F(A),B)\]
in $\B$, natural in $A \in \A_{\fp}$. It is a straight forward exercise to check that the two conditions $(\ast)$ on $H$ translate to the two conditions on $\alpha$ in the proposition. The proof for bicategorical coisoinserters is similar.
\end{proof}

In particular, we can answer the question asked in \cite[Remark 5.1.15]{Bra14} in a special case, namely how to freely adjoin a morphism to a cocomplete tensor category between two given objects.

\begin{cor}
Let $\A \in \LFP_{\otimes}$ and $U,V \in \A_{\fp}$. Then there is a universal cocomplete tensor category $\A[U \to V]$ with a cocontinuous tensor functor $P : \A \to \A[U \to V]$ and a morphism $P(U) \to P(V)$. It is explicitly given by the category of those $(A,\alpha)$, where $A \in \A$ and
\[\alpha : \HOM(V,A) \to \HOM(U,A)\]
is a morphism which is ``compatible with itself'', which means that the following diagram (where $U',V',\alpha'$ are copies of $U,V,\alpha$) commutes:
\[\begin{tikzcd}[column sep=4ex, row sep = 4ex]
\HOM(V,\HOM(V',A)) \ar{r}{\alpha'} \ar{d}[swap]{\sim} & \HOM(V,\HOM(U',A)) \ar{r}{\sim} & \HOM(U',\HOM(V,A)) \ar{d}{\alpha}  \\
\HOM(V',\HOM(V,A)) \ar{d}[swap]{\alpha} & & \HOM(U',\HOM(U,A)) \ar{d}{\sim}\\
\HOM(V',\HOM(U,A)) \ar{r}[swap]{\sim} & \HOM(U,\HOM(V',A)) \ar{r}[swap]{\alpha'} &  \HOM(U,\HOM(U',A)) 
\end{tikzcd}\]
\end{cor}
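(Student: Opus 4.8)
The plan is to recognize $\A[U \to V]$ as a bicategorical coinserter in $\LFP_{\otimes}$ and then read off its explicit description from \cref{lfpcoinstensor}. Let $\E$ denote the free locally finitely presentable tensor category on one object $x$; it exists because its finitely presentable part $\E_{\fp}$ is the free small finitely cocomplete tensor category on one object and $\E = \Ind(\E_{\fp})$. Its defining property is that for every cocomplete tensor category $\D$, evaluation at $x$ gives an equivalence $\Hom_{\c\otimes}(\E, \D) \myiso \D$; in particular a morphism of tensor functors between two cocontinuous tensor functors $\E \to \D$ is the same datum as a single morphism between the images of $x$. Since $U, V \in \A_{\fp}$, there are morphisms $F, G : \E \rightrightarrows \A$ in $\LFP_{\otimes}$ with $F(x) = U$ and $G(x) = V$, determined up to isomorphism by the universal property of $\E$.

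First I would form the bicategorical coinserter $P : \A \to \A[U \to V]$ of $F, G$ in $\LFP_{\otimes}$, which exists by \cref{lfpcocomplete}. By definition it comes equipped with a universal morphism of \emph{tensor} functors $P \circ F \to P \circ G$; under the equivalence $\Hom_{\c\otimes}(\E, \D) \simeq \D$ this morphism is exactly a morphism $P(U) \to P(V)$, and the universal property of the coinserter translates verbatim into the one asserted for $\A[U \to V]$: cocontinuous tensor functors $\A[U \to V] \to \D$ correspond to pairs consisting of a cocontinuous tensor functor $S : \A \to \D$ and a morphism $S(U) \to S(V)$. By \cref{colimremain} this universal property holds against all of $\Cat_{\c\otimes}$, not merely against $\LFP_{\otimes}$, as the statement requires.

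It then remains to specialize the description of \cref{lfpcoinstensor}, which presents the coinserter as the category of pairs $(A, \alpha)$ with $A \in \A$ and $\alpha : \HOM(G(-), A) \to \HOM(F(-), A)$ a tensor-compatible natural transformation of functors $\E^{\op} \to \A$, subject to the unit diagram and the binary-tensor diagram displayed there. The key reduction is that, because $\E$ is freely generated by $x$ under finite colimits and $\otimes$, the whole transformation $\alpha$ is determined by its single component $\alpha_x : \HOM(V, A) \to \HOM(U, A)$: the binary-tensor diagram forces the value on $x \otimes x$, hence inductively on every $x^{\otimes n}$, to be built from $\alpha_x$ via the tensor--hom adjunction isomorphisms, and cocontinuity then fixes $\alpha$ on all of $\E$. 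I would check that the unit diagram becomes vacuous (it only pins $\alpha$ at the unit $x^{\otimes 0}$ to the identity) and that the binary-tensor diagram, evaluated at $A = A' = x$ and combined with naturality of $\alpha$ with respect to the symmetry swapping the two tensor factors of $x \otimes x$, is precisely the hexagon ``compatible with itself'' in the statement, with $U', V', \alpha'$ the second copies.

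The main obstacle is the coherence content of this last step: one must verify that imposing the single hexagon on $\alpha_x$ is equivalent to the full tensor-compatibility of $\alpha$ on all objects of $\E$, i.e.\ that no further constraints arise at the higher tensor powers $x^{\otimes n}$. I would settle this with the coherence theorem for symmetric monoidal categories: every object of $\E_{\fp}$ is a finite colimit of the $x^{\otimes n}$, all morphisms among the latter are generated by the symmetries, and the hexagon is exactly the compatibility of the induced transformation on $x^{\otimes 2}$ with the swap; coherence then guarantees compatibility with every permutation of every $x^{\otimes n}$, so that $\alpha_x$ extends uniquely to a genuine tensor-compatible $\alpha$. This identifies the category of \cref{lfpcoinstensor} with the category of pairs $(A, \alpha)$ described in the corollary, completing the proof.
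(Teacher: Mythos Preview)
Your argument is correct and follows exactly the paper's approach: recognize $\A[U\to V]$ as the bicategorical coinserter of the two morphisms $\Set[X]\rightrightarrows \A$ classifying $U$ and $V$, invoke \cref{lfpcoinstensor} for the explicit description, and use \cref{colimremain} to extend the universal property to all of $\Cat_{\c\otimes}$. The paper compresses the final reduction (from a tensor-compatible $\alpha$ on all of $\Set[X]$ to the single component $\alpha_x$ satisfying the hexagon) into ``the claim follows easily,'' whereas you spell out that the unit diagram merely fixes $\alpha$ at $1$, the binary diagram at $x,x$ defines $\alpha_{x\otimes x}$, and naturality with respect to the swap is precisely the hexagon --- with coherence handling the higher powers; this elaboration is sound and is the content the paper is implicitly relying on.
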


\begin{proof}
Consider $\Set[X]$, the free cocomplete tensor category on an object $X$ \cite[Remark 5.1.14]{Bra14}. Then $\Set[X] \in \LFP_{\otimes}$ and $U,V \in \A_{\fp}$ correspond to morphisms $F,G : \Set[X] \rightrightarrows \A$ in $\LFP_{\otimes}$ via $F(X)=U$ and $G(X)=V$. Now the claim follows easily from \cref{lfpcoinstensor} and \cref{colimremain}.
\end{proof}

\begin{prop} \label{lfpcoequitensor}
Let $F,G : \A \rightrightarrows \B$ be two morphisms in $\LFP_{\otimes}$ and $\alpha,\beta : F \rightrightarrows G$ be two morphisms. The bicategorical coequifier of $\alpha,\beta$ is explicitly given by the category of those $B \in \B$ for which $\alpha^*,\beta^* : \HOM(G(-),B) \rightrightarrows \HOM(F(-),B)$ are equal.
\end{prop}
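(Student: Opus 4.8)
The plan is to imitate the proofs of \cref{lfpcoequi} and \cref{lfpcoinstensor}: pass to finitely presentable objects, form the coequifier in $\cat_{\fc\!\otimes}$, transport it to $\LFP_{\otimes}$ via $\Ind$, and finally represent the resulting continuous functors using Freyd's theorem. First I would restrict the data to finitely presentable objects, obtaining $F_{\fp},G_{\fp} : \A_{\fp} \rightrightarrows \B_{\fp}$ in $\cat_{\fc\!\otimes}$ together with $\alpha_{\fp},\beta_{\fp} : F_{\fp} \rightrightarrows G_{\fp}$. By \cref{coinscoeqtensor} these admit a bicategorical coequifier $\B_{\fp} \to \C$ in $\cat_{\fc\!\otimes}$, and since $\Ind : \cat_{\fc\!\otimes} \to \LFP_{\otimes}$ is an equivalence of $2$-categories (\cref{lfpcocomplete}), the induced cocontinuous tensor functor $\B = \Ind(\B_{\fp}) \to \Ind(\C)$ is the bicategorical coequifier of $\alpha,\beta$ in $\LFP_{\otimes}$. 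The remaining task is to identify the underlying category of $\Ind(\C)$.

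Next I would unwind the construction of $\C$ recalled in \cref{coinscoeqtensor}. Writing $\overline{F}_{\fp},\overline{G}_{\fp} : \B_{\fp} \boxtimes \A_{\fp} \to \B_{\fp}$ for the stabilized functors and $\overline{\alpha}_{\fp},\overline{\beta}_{\fp} : \overline{F}_{\fp} \rightrightarrows \overline{G}_{\fp}$ for the stabilized morphisms (with $\overline{\alpha}_{\fp}(B',A) = B' \otimes \alpha(A)$), the category $\C$ is their bicategorical coequifier in $\cat_{\fc}$, carrying an induced tensor structure. By \cref{indes} we have $\Ind(\C)^{\op} \simeq \Hom_{\fc}(\C,\Set^{\op})$, and the universal property of $\C$ in $\cat_{\fc}$ identifies this with the finitely cocontinuous functors $H : \B_{\fp} \to \Set^{\op}$ coequifying $\overline{\alpha}_{\fp},\overline{\beta}_{\fp}$, equivalently with the continuous functors $K : \B^{\op} \to \Set$ coequifying $\overline{\alpha}^{\op},\overline{\beta}^{\op}$. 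Since $\B$ is locally presentable, Freyd's special adjoint functor theorem lets me take $K = \Hom(-,B)$ to be representable, with $B$ ranging over $\B$.

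Finally I would translate the coequifying condition on $K = \Hom(-,B)$ into the internal-Hom equality of the statement. Applying $\Hom(-,B)$ to $\overline{\alpha}(B',A) = B' \otimes \alpha(A) : B' \otimes F(A) \to B' \otimes G(A)$ and invoking the tensor-hom adjunction $\Hom(B' \otimes G(A),B) \cong \Hom(B',\HOM(G(A),B))$ rewrites the induced map as postcomposition with $\HOM(\alpha(A),B)$; requiring equality with the $\beta$-analogue for every $B'$ and then applying the Yoneda Lemma in $\B$ collapses the condition to $\alpha^* = \beta^* : \HOM(G(-),B) \rightrightarrows \HOM(F(-),B)$, naturally in $A$, which is exactly the asserted description.

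I expect the only delicate point to be this last step, and specifically the role of the stabilization. The naive requirement that $K$ coequify the unstabilized $\alpha,\beta$ amounts merely to the equality of Hom-set maps $\Hom(\alpha(A),B) = \Hom(\beta(A),B)$, which is strictly weaker than $\alpha^* = \beta^*$; the auxiliary variable $B'$ in $\overline{\alpha},\overline{\beta}$ is precisely what, after the adjunction and Yoneda, upgrades this to an equality of morphisms between the internal Hom \emph{objects}, mirroring the reason the stabilization is needed in \cref{coequifish}. The argument transfers verbatim to the $\IK$-linear variant.
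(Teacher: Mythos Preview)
Your proposal is correct and follows exactly the approach the paper indicates: reduce to $\cat_{\fc\!\otimes}$ via finitely presentable objects, invoke the construction of the coequifier from \cref{coinscoeqtensor} (i.e.\ the stabilized coequifier of $\overline{\alpha}_{\fp},\overline{\beta}_{\fp}$ in $\cat_{\fc}$), apply \cref{indes}, representability via Freyd, and then the tensor--Hom adjunction together with Yoneda. In fact you spell out considerably more than the paper's own proof, which merely points to \cref{lfpcoinstensor} and \cref{coinscoeqtensor}; your remark on why the stabilization is what promotes the Hom-set equality to the internal-Hom equality is exactly the point.
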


\begin{proof}
The proof is similar to that of \cref{lfpcoinstensor} and uses the construction of bicategorical coequifiers in $\cat_{\fc\!\otimes}$, cf.\ \cref{coinscoeqtensor}.
\end{proof}

\begin{prop} \label{lfpcoinvtensor}
Let $F,G : \A \rightrightarrows \B$ be two morphisms in $\LFP_{\otimes}$ and $\alpha : F \to G$ be a morphism. The bicategorical coinverter of $\alpha$ is explicitly given by the category of those $B \in \B$ for which $\alpha^* : \HOM(G(-),B) \to \HOM(F(-),B)$ is an isomorphism.
\end{prop}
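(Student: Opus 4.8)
The plan is to imitate the method of \cref{lfpcoinstensor} and \cref{lfpcoinv}: reduce to finitely presentable objects, recall how the coinverter is built in $\cat_{\fc\!\otimes}$, and then read off the explicit description by combining \cref{indes} with Freyd's special adjoint functor theorem. Writing $F_{\fp},G_{\fp} : \A_{\fp} \rightrightarrows \B_{\fp}$ and $\alpha_{\fp} : F_{\fp} \to G_{\fp}$ for the restrictions to finitely presentable objects, the bicategorical coinverter of $\alpha$ in $\LFP_{\otimes}$ is $\Ind(\D)$, where $\B_{\fp} \to \D$ is the bicategorical coinverter of $\alpha_{\fp}$ in $\cat_{\fc\!\otimes}$, and it suffices to describe the underlying category of $\Ind(\D)$.

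First I would recall the construction of $\D$. As in \cref{coinvex} (whose argument carries over to $\cat_{\fc\!\otimes}$ exactly as in \cref{coinscoeqtensor}), one stabilizes $\alpha_{\fp}$ to a morphism $\overline{\alpha} : \overline{F} \to \overline{G} : \B_{\fp} \boxtimes \A_{\fp} \to \B_{\fp}$ with $\overline{F}(B' \boxtimes A) = B' \otimes F(A)$ and $\overline{\alpha}(B',A) = B' \otimes \alpha(A)$, and takes the bicategorical coinverter $P : \B_{\fp} \to \D$ of $\overline{\alpha}$ in $\cat_{\fc}$; the functor $P$ then carries a canonical tensor structure and is the coinverter of $\alpha_{\fp}$ in $\cat_{\fc\!\otimes}$. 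By \cref{indes}, $\Ind(\D)^{\op}$ is the category of finitely cocontinuous functors $\D \to \Set^{\op}$, which by the universal property of $P$ in $\cat_{\fc}$ are the finitely cocontinuous functors $\B_{\fp} \to \Set^{\op}$ inverting $\overline{\alpha}$, i.e.\ the continuous functors $K : \B^{\op} \to \Set$ that invert $\overline{\alpha}$. Freyd's special adjoint functor theorem forces $K \simeq \Hom(-,B)$ for some $B \in \B$, so the condition becomes that
\[\Hom\bigl(B' \otimes G(A),B\bigr) \to \Hom\bigl(B' \otimes F(A),B\bigr)\]
is an isomorphism for all $A \in \A_{\fp}$ and $B' \in \B_{\fp}$. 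Applying the internal-hom adjunction $\Hom(B' \otimes X, B) \myiso \Hom\bigl(B',\HOM(X,B)\bigr)$ and then the Yoneda Lemma (using that $\B_{\fp}$ is a strong generator of $\B$), this is equivalent to $\alpha^* = \HOM(\alpha(A),B) : \HOM(G(A),B) \to \HOM(F(A),B)$ being an isomorphism, naturally in $A$, which is exactly the asserted condition.

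The point that deserves attention — and the only genuine difference from \cref{lfpcoinstensor} — is that no compatibility diagrams appear here. In \cref{lfpcoinstensor} the inserted morphism is free, so its unit- and tensor-compatibility must be imposed by a further coequifier, which survives into the explicit description as two commuting diagrams. For the coinverter, by contrast, $\alpha$ is already a morphism of tensor functors, and the inverse of such a morphism is again a morphism of tensor functors wherever it exists; this is precisely why the construction of $\D$ is a single step with no rectifying coequifier, and correspondingly why the only surviving condition on $B$ is that $\alpha^*$ be invertible. I expect verifying this last point — that the stabilized coinverter construction needs no second step, so that the coherence conditions of \cref{lfpcoinstensor} collapse — to be the main thing to get right; the remaining translations through \cref{indes}, Freyd and Yoneda are routine and identical to the earlier proofs. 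Finally, by \cref{colimremain} the same universal property in fact holds in $\Cat_{\c\otimes}$.
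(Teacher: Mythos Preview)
Your proposal is correct and follows exactly the approach the paper indicates: reduce to $\cat_{\fc\!\otimes}$, invoke the one-step stabilized coinverter construction of \cref{coinvex}, and then translate through \cref{indes}, Freyd, and Yoneda as in \cref{lfpcoinstensor}. Your explicit observation that no coequifier step---and hence no coherence diagram---survives here, because inverting a morphism of tensor functors needs no rectification, is precisely the point behind the paper's terse reference to \cref{coinvex}.
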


\begin{proof}
The proof is similar to that of \cref{lfpcoinstensor} and uses the construction of bicategorical coinverters in $\cat_{\fc\!\otimes}$, cf.\ \cref{coinvex}.
\end{proof}

\begin{prop} \label{lfppushtensor}
Let $F : \C \to \A$ and $G : \C \to \B$ be two morphisms in $\LFP_{\otimes}$. Their bicategorical pushout $\smash{\A \coboxtimes_\C \B}$ in $\LFP_{\otimes}$ is explicitly given by the category of those $(P,\alpha)$, where $ P : \A_{\fp}^{\op} \times \B_{\fp}^{\op} \to \Set$ is a presheaf which is finitely continuous in each variable and $\alpha$ is a family of natural isomorphisms
\[\alpha_{A,B,C} : P\bigl(A,G(C) \otimes B\bigr) \myiso P\bigl(A \otimes F(C),B\bigr)\]
for $A \in \A_{\fp}$, $B \in \B_{\fp}$, $C \in \C_{\fp}$ which are compatible with the tensor structure on $\C_{\fp}$, i.e.\ the two following diagrams commute: 
\[\begin{tikzcd}[row sep = 5ex]
P\bigl(A,G(1_\C) \otimes B) \ar{rr}{\alpha} & & P(A \otimes F(1_\C),B) \\ 
 & P(A,B) \ar{ul}{\sim} \ar{ur}[swap]{\sim} & 
\end{tikzcd}\]
\[\begin{tikzcd}[row sep = 5ex, column sep = 5ex]
P\bigl(A,G(C \otimes C') \otimes B\bigr) \ar{rr}{\alpha} && P\bigl(A \otimes F(C \otimes C'),B\bigr) \\
P\bigl(A,G(C) \otimes (G(C') \otimes B)\bigr) \ar{r}[swap]{\alpha} \ar{u}{\sim} & P\bigl(A \otimes F(C),G(C') \otimes B\bigr) \ar{r}[swap]{\alpha} & P\bigl((A \otimes F(C)) \otimes F(C'),B\bigr) \ar{u}[swap]{\sim}
\end{tikzcd}\]
\end{prop}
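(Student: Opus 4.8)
The plan is to follow the same two-step reduction used in \cref{lfpcoins} and \cref{lfpcoinstensor}. By \cref{lfpcocomplete} the $2$-functor $\Ind : \cat_{\fc\!\otimes} \to \LFP_{\otimes}$ is an equivalence with pseudo-inverse $\A \mapsto \A_{\fp}$, so the bicategorical pushout of $F,G$ in $\LFP_{\otimes}$ is $\Ind(\A_{\fp} \boxtimes_{\C_{\fp}} \B_{\fp})$, where $\A_{\fp} \boxtimes_{\C_{\fp}} \B_{\fp}$ denotes the bicategorical pushout of $F_{\fp},G_{\fp}$ in $\cat_{\fc\!\otimes}$ (which exists by \cref{tensorcocomplete}); by \cref{colimremain} this is moreover the pushout in $\Cat_{\c\otimes}$, so the description below really is that of the pushout. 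Writing $\D \coloneqq \A_{\fp} \boxtimes_{\C_{\fp}} \B_{\fp}$, \cref{indes} identifies $\Ind(\D)$ with $\Hom_{\fc}(\D,\Set^{\op})^{\op}$, so it suffices to describe the finitely cocontinuous functors $\D \to \Set^{\op}$.

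For this I would recall the explicit construction of $\D$ from \cref{coinscoeqtensor} and \cref{normaltensorpush}: one first forms the bicategorical coproduct $\A_{\fp} \boxtimes \B_{\fp}$, then the bicategorical coequalizer (coisoinserter) in $\cat_{\fc}$ of the parallel pair $(\A_{\fp} \boxtimes \B_{\fp}) \boxtimes \C_{\fp} \rightrightarrows \A_{\fp} \boxtimes \B_{\fp}$ sending $(A \boxtimes B) \boxtimes C$ to $(A \otimes F(C)) \boxtimes B$ and to $A \boxtimes (B \otimes G(C))$, and finally a suitable multiple bicategorical coequifier in $\cat_{\fc}$ that makes the resulting structural isomorphism compatible with the tensor structure. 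Since each of these three layers is a bicategorical colimit in $\cat_{\fc}$, the category $\Hom_{\fc}(\D,\Set^{\op})$ can be computed layer by layer from their universal properties, exactly as in \cref{lfpcoinstensor}.

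Unwinding these in order: by the universal property of $\boxtimes$, a finitely cocontinuous functor $\A_{\fp} \boxtimes \B_{\fp} \to \Set^{\op}$ is the same as a functor $\A_{\fp} \times \B_{\fp} \to \Set^{\op}$ that is finitely cocontinuous in each variable, i.e.\ (after passing to opposites) a presheaf $P : \A_{\fp}^{\op} \times \B_{\fp}^{\op} \to \Set$ finitely continuous in each variable; the coisoinserter layer equips it with a natural isomorphism $P(A, G(C) \otimes B) \myiso P(A \otimes F(C), B)$, which is the family $\alpha$ (the symmetry of $\B$ absorbing the harmless reordering of $B \otimes G(C)$); and the coequifier layer imposes exactly the condition that $\alpha$ be compatible with the unit and the tensor product of $\C_{\fp}$, which unwinds to the two displayed commutative diagrams. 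The main obstacle is the last point: I expect the routine but delicate work to be checking that the two parallel pairs defining the coequifier — the one enforcing unit-compatibility and the one enforcing multiplicativity of the structural isomorphism, precisely the ``second step'' of \cref{twosteps} — translate exactly into the two diagrams in the statement. The handling of the coisoinserter is as in \cref{lfpcoins}, and no appeal to representability is needed here, since the description is directly in terms of presheaves $P$, just as in \cref{lfptensor}.
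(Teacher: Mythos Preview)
Your proposal is correct and follows essentially the same approach as the paper: reduce to $\Ind(\A_{\fp} \boxtimes_{\C_{\fp}} \B_{\fp})$, identify this via \cref{indes} with finitely cocontinuous functors into $\Set^{\op}$, and then unwind the three-layer construction of the pushout in $\cat_{\fc\!\otimes}$ (coproduct $\boxtimes$, coequalizer, multiple coequifier) from \cref{normaltensorpush}. Your account is in fact slightly more explicit than the paper's, which only sketches these steps.
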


\begin{proof}
The proof is similar to that of \cref{lfpcoinstensor} and uses the construction of bicategorical pushouts in $\cat_{\fc\!\otimes}$, cf.\ \cref{normaltensorpush}.
We have $\smash{\A \coboxtimes_\C \B = \Ind(\A_{\fp} \boxtimes_{\C_{\fp}} \B_{\fp})}$, which is opposite to the category of finitely cocontinuous functors $\A_{\fp} \boxtimes_{\C_{\fp}} \B_{\fp} \to \Set^{\op}$. Now $\A_{\fp} \boxtimes_{\C_{\fp}} \B_{\fp}$ is constructed from $\A_{\fp} \boxtimes \B_{\fp}$ using a bicategorical coequalizer of $\A_{\fp} \boxtimes \B_{\fp} \boxtimes \C_{\fp} \rightrightarrows \A_{\fp} \boxtimes \B_{\fp}$ in $\cat_{\fc}$ followed by a multiple bicategorical coinserter in $\cat_{\fc}$. The claim easily follows.
\end{proof}


\end{document}